\documentclass[twoside,11pt, preprint]{article}

\usepackage{blindtext, comment}

\usepackage[abbrvbib, hyperref]{jmlr2e}

\usepackage{lipsum}
\usepackage{amsfonts}
\usepackage{epstopdf}
\ifpdf
  \DeclareGraphicsExtensions{.eps,.pdf,.png,.jpg}
\else
  \DeclareGraphicsExtensions{.eps}
\fi

\usepackage{datetime}
\newdateformat{monthyeardate}{%
  \monthname[\THEMONTH], \THEYEAR}

\usepackage{amsthm,amsmath}
\usepackage{amssymb}
\usepackage{commath}
\usepackage{mathtools}

\usepackage{cleveref}
 
\newtheorem{theorem}{Theorem}
\newtheorem{lemma}[theorem]{Lemma} 
\newtheorem{proposition}[theorem]{Proposition} 
\newtheorem{remark}[theorem]{Remark}
\newtheorem{corollary}[theorem]{Corollary}
\newtheorem{definition}[theorem]{Definition}

\allowdisplaybreaks

\usepackage{bbm,mathrsfs}
\usepackage{cases}

\usepackage{color}
\usepackage{graphicx}
\usepackage[small]{caption}
\usepackage{subcaption}
\usepackage{stackengine}
\usepackage{flafter} 

\usepackage{relsize}
\usepackage{adjustbox}
\usepackage{algorithm}
\usepackage[noend]{algpseudocode}
\usepackage[algo2e,ruled]{algorithm2e}

\usepackage{booktabs}
\usepackage{titletoc}

\dottedcontents{section}[3em]{\bfseries}{2em}{1pc}
\dottedcontents{subsection}[5em]{}{2em}{1pc}

\usepackage{tikz}
\usetikzlibrary{arrows.meta,
                positioning,
                quotes}
\usepackage{multirow}
\usepackage{soul}
\usepackage{qtree}
\usepackage{forest}

\usepackage{enumitem}
\setlist[enumerate]{leftmargin=.5in}
\setlist[itemize]{leftmargin=.5in}

\usepackage{amsopn}

\DeclareMathOperator*{\argmin}{arg\,min}

\renewcommand{\norm}[1]{{\left\lVert{#1}\right\rVert}}

\newcommand{\id}{\mathop{}\mathopen{}\mathrm{id}}
\newcommand{\push}[2]{{#1}_{\#}#2}

\newcommand{\X}{\mathcal{X}}
\newcommand{\Y}{\mathcal{Y}}

\newcommand{\Pc}{\mathbb{P}_{\textup{cost}}}

\newcommand{\R}{\mathbb{R}} 

\newcommand{\D}{\mathscr{D}}
\newcommand{\B}{\mathscr{B}}
\newcommand{\F}{\mathscr{F}}
\renewcommand{\P}{\mathbb{P}}
\newcommand{\Rm}[1]{\textup{\uppercase\expandafter{\romannumeral#1}}}

\usepackage{lastpage}
\jmlrheading{23}{2022}{1-\pageref{LastPage}}{1/21; Revised 5/22}{9/22}{21-0000}{Jacobs and Zhou}

\ShortHeadings{The Signed Barycenter}{Jacobs and Zhou}
\firstpageno{1}

\begin{document}

\title{The Signed Wasserstein Barycenter Problem}

\author{\name Matt Jacobs \email majaco@ucsb.edu  \\
       \addr Department of Mathematics\\
       University of California\\
       Santa Barbara, CA 93106
       \AND
       \name Bohan Zhou \email bhzhou@ucsb.edu \\
       \addr Department of Mathematics\\
       University of California\\
       Santa Barbara, CA 93106
       }

\editor{My editor}

\maketitle

\begin{abstract}
   Barycenter problems encode important geometric information about a metric space.  While these problems are typically studied with positive weight coefficients associated to each distance term, more general signed Wasserstein barycenter problems have recently drawn a great deal of interest.  These mixed sign problems have appeared in statistical inference setting as a way to generalize least squares regression to measure valued outputs and have appeared in numerical methods to improve the accuracy of Wasserstein gradient flow solvers.  Unfortunately, the presence of negatively weighted distance terms destroys the Euclidean convexity of the unsigned problem, resulting in a much more challenging optimization task.  The main focus of this work is to study properties of the signed barycenter problem for a general transport cost with a focus on establishing uniqueness of solutions.  In particular, when there is only one positive weight, we extend the uniqueness result of  \cite{tornabene2024generalizedwassersteinbarycenters} to any cost satisfying a certain convexity property.   In the case of arbitrary weights, we introduce the dual problem in terms of Kantorovich potentials and provide a sufficient condition for a stationary solution of the dual problem to induce an optimal signed barycenter.  
\end{abstract}

\begin{keywords}
  Wasserstein barycenter, affine span, distribution-to-distribution regression, JKO scheme.
\end{keywords}

\section{Introduction}

In Euclidean geometry, affine combinations provide a natural mechanism to interpolate and extrapolate between points. The collection of all such combinations,  the affine span, is rooted in several fundamental ideas in linear algebra such as the basis and dimension. A natural question arises if one could consider an analogous concept in the space of probability measures. Given a collection of probability measures $\{\mu_{i}\}_{i=1}^m$ supported on a open subset $\X\subseteq \R^d$, and an affine combination of weights $\{a_{i}\}_{i=1}^m\subset \R$ such that $\sum_{i=1}^ m a_{i}=1 $, we define the signed barycenter as a probability measure that minimizes the weighted transport cost:
    \[\bar{\nu}\in \argmin_{\nu} \sum_{i=1}^m a_{i} K_c(\mu_{i},\nu),\]
where $K_c(\mu_{i},\nu)$ represents the cost of transporting input probability measure $\mu_{i}$ to another probability measure $\nu$, subject to a ground cost function $c:\X\times \X\to \R$. When all weights $\{a_{i}\}_{i=1}^m$ are nonnegative and $K_c(\mu_{i},\nu)=\frac{1}{2}W_2^2(\mu_{i},\nu)$, where $W_2(\cdot,\cdot)$ is the quadratic Wasserstein distance, the minimizer $\bar{\nu}$ is known as the \textit{Wasserstein barycenter} \citep{Agueh2011Barycenters}. Allowing negative weights leads to a natural extension under the name \textit{generalized Wasserstein barycenter} studies by \citet{tornabene2024generalizedwassersteinbarycenters}. However this terminology has been used inconsistently across the literature \citep[see for example,][]{Alteschuler2022NP, GT2023Multimarginal, Garcia2024Existence, Delon2023GWB}. To avoid ambiguity, we adopt the term \textit{signed barycenter} to refer to the minimizer of the above problem.

The seminar work by \cite{Agueh2011Barycenters} on the Wasserstein barycenter become one of the most influential contributions in optimal transport. It provides a notion of a ``weighted average'' of input probability measures through a nonlinear interpolation of measures, shedding light on analyzing one kind of Fr\'{e}chet mean of input data, which arises naturally from a broad range of mathematical and statistical research. Owing to its central role as a notion of mean, it invites numerous applications, including matching problems \citep{Carlier2010Matching}, texture mixing \citep{Rabin2012Slice}, shape interpolation \citep{Solomon2015Convolutional}, template registration \citep{Boissard2015Template}, clustering \citep{Ye2017Cluster, Zhuang2022Kmeans}, Bayesian inference \citep{Srivastava2018Bayes}, model fusion \citep{Singh2020Model}, adversarial classification \citep{GT2023Multimarginal}, among others. 

Despite its broad applicability, the classical Wasserstein barycenter is defined through convex combinations and is therefore inherently limited to interpolation. This restriction precludes the study of extrapolation and the affine span in the space of probability measures. Negative coefficients occur naturally in many applications. For example, in \citep{Gunsilius2023Distributional}, the counterfactual quantile function of incomes in a treated state following a minimum-wage policy change is constructed using quantile functions from control states. From an economic perspective, the contribution of a control state can exhibit negative correlations. Moreover, the conditional Wasserstein barycenter problem studied in \citep{Fan2025Conditional, Ubribe2024Network}, proposed as a type of regression with distributional responses, can be viewed as a special case of the signed barycenter problem, in which the weights depend on scalar or vector-valued predictors and may take negative values.

One may also regard the signed barycenter as a map from input measures and weights
    \[(\mu_{1},\ldots,\mu_{m};a_1,\ldots,a_{m})\mapsto \bar{\nu}=\Gamma_{
    \textbf{a}}(\mu_1,\ldots,\mu_{m})\]
into a response measure. This can be interpreted as a form of parametric statistical inference in the space of probability measures, where both predictor variables $(\mu_1,\ldots,\mu_{m})$ and response variable $\bar{\nu}$ are distributions. Statistical inference models where both the sample and parameter spaces are functional spaces, rather than Euclidean, have gained increasing attention with the rise of functional data analysis. The distribution-to-distribution regression, as one way to infer the distributional response from distributional predictor, becomes canonical since the introduction of the Wasserstein metric. See \cite{Petersen2022Modeling} for a comprehensive review and \cite{kim2025dfnndeepfrechetneural} for a complete summary of regression models under different settings. 

Here we focus on the case when the space of distributions are endowed with the Wasserstein metric. Works in \citep{Chen2023Regression,  Zhang2022Autoregressive, Ghodrati2022Regression} have focused on the regression model $\bar{\nu}_i=\Gamma_{\beta}(\mu_i)$ based on observed pairs $(\mu_i,\nu_i)$, where both predictor and response variables are univariate distributions. It is well-known that optimal transport maps in 1D are given by quantile functions. Therefore these models are essentially generalized from the classical simple linear regression and transformed into a function-to-function regression by the optimal transport maps. While recent works \citep{OKANO2024Gaussian, ghodrati2024higher, chen2025slicedwassersteinregression} have extended to multivariate distributions, with Gaussian distribution as a tractable special case. Such extensions typically rely on the existence of optimal transport maps, and often on their explicit forms. By contrast, a multiple multivariate regression built on the signed barycenter problem does not require the corresponding reformulation into a function-to-function regression. To the best of our knowledge, this perspective has not been systematically explored in the existing literature, although a thorough statistical analysis is left for future study. 

Our model is also a key component in the high order scheme for the Wasserstein gradient flow. The renowned JKO scheme \citep{JKO1998FPequation}, $\mu_{m+1}\in \argmin_{\mu} \F(\mu)+\frac{1}{2\eta}W_2^2(\mu_{m},\mu)$, can be interpreted as a backward Euler scheme with a stepsize $\eta$ for the Wasserstein gradient flow of a functional $\F$. \citet{Han2023High} extended this first-order scheme to multi-step, multi-stage and mixed schemes, for high order accuracy and energy stability. Their building block is in the form of 
    \[\mu_{n+1} \in \argmin_{\mu} \F(\mu)+\frac{1}{2\eta}\sum_{j=1}^K a_{j}W_2^2(\nu_{j},\mu),\]
where $\{a_j\}_{j=1}^K$ are prescribed possibly negative weights, and measures $\{\nu_j\}_{j=1}^K$ ultimately depends only on measures $\{\mu_{i}\}_{i=1}^n$ in the previous iterates. While their work primarily focuses on stability conditions, the existence and uniqueness of this variational problem remain open. Prior to this work, several second-order schemes had been explored, such as implicit midpoint \citep{Legendre2017Second},  Crank–Nicolson \citep{Carrillo2022PD}, BDF2 \citep{Matthes2019BDF2}, EVBDF2 \citep{Gallouet2024EVBDF}. Notably, the extrapolation components in BDF2 and EVBDF2 correspond to a special case of the signed barycenter problem, where only one positive and one negative weight are present.

\noindent\textbf{\large Main Assumptions.} 

Throughout this work, we assume that $\X, \Y \subset\R^d$ are open sets and the cost function $c:\X\times\mathcal{Y}\to [0,\infty)$ satisfies the following properties.
\begin{enumerate}
    \item \label{asp:cost_twist} $c\in C^2(\X\times\Y)$ and is bi-twisted, i.e., $y\mapsto \nabla_x c(x,y)$ and $x\mapsto \nabla_y c(x,y)$ are injective.
    
    \item \label{asp:cost_compact} The sets $\left\{x\in \X: c(x,y')\leqslant b\right\}$ and $\left\{y\in \mathcal{Y}: c(x',y)\leqslant b\right\}$ are compact for any $b\in \R$ and $(x',y')\in \X\times\Y$.

    \item \label{asp:cost_growth} There exists a point $(x_0,y_0)\in \X\times\Y$ such that for any $\varepsilon>0$ there exists $B_{\varepsilon}>0$ such that \begin{equation}\label{eq:cost_epsilon_property}
    (1-\varepsilon)c(x_0,y)-B_{\varepsilon} c(x,y_0)\leqslant c(x,y)\leqslant (1+\varepsilon)c(x_0,y)+B_{\varepsilon} c(x,y_0)
    \end{equation}
and
    \begin{equation}\label{eq:cost_epsilon_property_2}
    (1-\varepsilon)c(x,y_0)-B_{\varepsilon} c(x_0,y)\leqslant c(x,y)\leqslant (1+\varepsilon)c(x,y_0)+B_{\varepsilon} c(x_0,y)
    \end{equation}
    for all $(x,y)\in \X\times\Y$.

\end{enumerate}
In particular, we may pick $c(x,y)=\abs{x-y}^p$ when $\X=\Y=(\R^d,\norm{\cdot}_p)$.

From the above assumptions, for $(x_0,y_0)\in \X\times\Y$ identified in Assumption \ref{asp:cost_growth}, it is natural to define the spaces
    \begin{align*}
    \Pc(\X)&:=\left\{\mu\in \P(\X): \int_{\X} c(x,y_0)\, \mathrm{d}\mu(x)<\infty\right\},\\
    \Pc(\Y)&:=\left\{\nu\in \P(\Y): \int_{\X} c(x_0,y)\, \mathrm{d}\nu(y)<\infty\right\},\\
    \Pc(\X\times\Y)&:=\left\{\pi\in \P(\X\times\Y): \int_{\X\times\Y} \big(c(x,y_0)+c(x_0,y)\big)\, \mathrm{d}\pi(x,y)<\infty\right\}.
    \end{align*}
Readers may recall that these spaces impose finiteness requirements, analogous to those in \citet[Theorem 3.2]{Ambrosio2003Existence}.

\begin{remark}
These definitions and assumptions generalize the signed barycenter problem in the space $(\P_2(\X), W_2(\cdot,\cdot))$ of probability measures with finite moments under the quadratic cost $c(x,y)=\frac{1}{2}\abs{x-y}^2$, to settings of a general cost function without compactness assumptions. 
\begin{itemize}
    \item Under Assumption \ref{asp:cost_compact}, it is sufficient to check a simple integral condition  for the tightness of a set $K\subseteq \Pc(\X)$. Specifically, $K$ is tight if $\sup_{\mu\in K}\int_{\X} c(x,y_0)\,\mathrm{d}\mu(x)<\infty$ for some $y_0$.
    \item Assumption \ref{asp:cost_growth} is a growth condition. It mirrors formula (6.10)  in \cite{Villani2009Old}, a property of the cost function given by a metric.
\end{itemize}
\end{remark}

\noindent\textbf{\large Main Results }

In this paper, we study (empirical) signed barycenter problems from both primal approach in the space of probability measures, and dual approach in the space of potential functions. 

\noindent\textbf{Primal approach}\indent Under our main assumptions on the cost function $c$ and sets $\X,\Y$, we establish in \Cref{thm:existence} the existence of the signed barycenter, which generalizes the existence result of \cite{tornabene2024generalizedwassersteinbarycenters} that only considered the quadratic cost. We then analyze properties of the signed barycenter functional on the space of probability measures. The presence of negative weights fundamentally change the nature of the problem: the functional is no longer Euclidean convex, and, the signed barycenter need not be absolutely continuous,  as observed in \cite{tornabene2024generalizedwassersteinbarycenters}. Furthermore, for general costs, we also lose the tool of $\lambda$-convexity along generalized geodesic that plays a crucial role in previous literature (see for example \cite{Gallouet2024EVBDF,Gallouet2025Metric,tornabene2024generalizedwassersteinbarycenters}) for the quadratic cost. 

These obstacles sum up and motivate a dual approach. Nevertheless, a notable exception arises when the barycenter functional involves exactly one positive weight. In this setting, we prove in \Cref{thm:convex-unique} that, if there is a convex interpolation for the cost function, then there exists an explicit continuous interpolating curve in the space of probability measures, such that the barycenter functional is convex along this curve. This result does not require that the transport cost induces a metric for the space of probability measures, and extends the $\lambda$-convexity of the barycenter functional along generalized geodesics under the quadratic cost: the case of one positive and one negative weight are studied in \cite{Gallouet2025Metric}, and the case of one positive and multiple negative weights in   \cite{tornabene2024generalizedwassersteinbarycenters}.

\noindent\textbf{Dual approach}\indent We propose a new dual formulation for the signed barycenter problem in a general setting, allowing a general cost function and more than two inout measures. This approach starts from a new treatment on the congruence condition (see \Cref{prop:dual-sum-zero}) satisfied by dual potentials. In contrast to Toland's duality used in \cite{Gallouet2024EVBDF,Gallouet2025Metric} for the Wasserstein extrapolation, i.e., the difference between two 2-Wasserstein terms, their duality modifies Brenier's theorem to impose a strong convexity constraint on the Kantorovich potentials. 

When at least one input marginal with positive weight is absolutely continuous, the resulting dual problem takes the form of a min-max problem, due to the presence of negative weights. We study the dual functional, and its stationary and saddle points. By the convex-concave property, we characterize the signed barycenter in terms of saddle points in \Cref{thm:main-induced}. The uniqueness of the signed barycenter follows up in \Cref{thm:unique}, under additional absolutely continuous marginal associated with a positive weight. From computational perspective, a stationary point is generally more tractable in the min-max problem. We therefore propose a sufficient condition for global optimality in \Cref{thm:saddle_pt_sufficient}, paving avenue to future numerical computation of the signed barycenter. This dual framework have been applied for computing the classical barycenter \citep{kim2025sobolev} as well.

\noindent\textbf{\large Related Work.}

\noindent\textbf{Theoretical results of barycenter }\indent \citet{Agueh2011Barycenters} studied the existence, uniqueness, and absolute continuity of the classical barycenter problem in the Euclidean space. They derived a dual formulation and characterized the optimality conditions. They also reformulated the barycenter problem into the multimarginal optimal transport problem, proposed by \citet{Gangbo1998Multidimensional}. Later, \cite{Kim2017Barycenter} extended those results to Riemannian manifolds. In statistical literature, such barycenter problems are referred to as \textit{empirical barycenters}, as the input measures may be interpreted as finite samples drawn from an underlying distribution on the space of probability measures. \cite{LeGouic2017Existence} studied the existence of the corresponding \textit{population barycenters} and the consistency between empirical and population barycenters. More recently, \cite{tornabene2024generalizedwassersteinbarycenters} studied the signed Wasserstein barycenter problem under the quadratic cost function $c(x,y)=\frac{1}{2}\abs{x-y}^2$, proving existence in both empirical and population settings. They established the uniqueness when there is only one positive weight, via a generalized $\lambda$-geodesic convexity argument. They also characterize the signed barycenter in one dimension, where the OT maps are given by monotone maps and $(\P(\R),W_2)$ is a flat metric space, which does not extend to higher dimensions.  

\noindent\textbf{Computational results of barycenter}\indent Since the introduction of the Wasserstein barycenter problem, the design and analysis of efficient numerical methods for computing Wasserstein barycenters has been an active area of research. Despite hardness results showing that the general barycenter problem in high dimensions is NP-hard \citep{Altschuler2021Polynomial, Alteschuler2022NP}, feasible solvers are available under various conditions. Given the rapid advancements in this field, it is nearly impossible to provide an exhaustive list of relevant literature. A major class of methods is based on entropic regularization, following the introduction of the Sinkhorn algorithm for optimal transport \citep{Cuturi2013Sinkhorn}. For example, iterative Bregman projections \citep{Benamou2015Iterative}, convolutional kernel \citep{Solomon2015Convolutional}, belief propagation \citep{Haasler2021Probabilistic}. Typically at the cost of expensive training complexity and additional regularization, neural-network-based solvers have been developed \citep{Li2020Continuous,Fan2021Scalable, Korotin2021NOT, Kolesov2024Energy, feng2024improving} to obtaining continuous representation of barycenter or interpolations under more general costs. Several alternative barycenter notions have also been proposed for computational tractability, typically relying on explicit solutions of OT maps, such as sliced Wasserstein barycenter \citep{Rabin2012Slice}, linearized Wasserstein barycenter \citep{Merigot2020LOT}, Bures-Wasserstein barycenter \citep{Chewi2020Bures}.

To obtain the exact Wasserstein barycenter without regularization, methods include L-BFGS \citep{Carlier2015Numerical}, fixed-point \citep{Alvarez2016Fixed, Zemel2019Frechet}, primal-dual approach \citep{kim2025optimaltransportbarycenternonconvexconcave} and dual approach \citep{Zhou2022Efficient, kim2025sobolev}.

\section{Preliminary}

\subsection{Notations}

For notational convenience, and without loss of the generality, we assume that the affine combinations of weights $\left\{a_i\right\}_{i=1}^m$ are nonzero and ordered, i.e., $a_1\geqslant \cdots \geqslant a_\ell>0> a_{\ell+1}\geqslant\ldots \geqslant a_m$, where the index $\ell$ marks the smallest positive weight. $I_{+}=\left\{1,\ldots,\ell\right\}$ and $I_{-}=\left\{\ell+1,\ldots,m\right\}$ are the index sets for positive and negative weights respectively.

We denote the set of probability measures supported on the set $\X$ by $\mathbb{P}(\X)$, the set of bounded continuous functions defined on $\X$ by $C_b(\X)$, the set of couplings with the first marginal as $\mu$ and the second marginal as $\nu$ by $\Pi(\mu,\nu)$. This notation is also generalized to $\Pi(\mu_1,\ldots,\mu_m, \nu)$ with multiple marginals. We denote another collection of couplings $\pi\in \Pi_*(\mu_1,\ldots,\mu_m; \nu)$, such that for any element $\pi\in \Pi_*(\mu_1,\ldots,\mu_m; \nu)$, it satisfies $\push{P_{\{i\}}}{\pi}\in \Pi_*(\mu_i,\nu)$. We denote the subset of absolutely continuous measures with respect to the Lebesgue measure by $\Pc^{\textup{AC}}(\X)$, and we may abuse a measure $\mu$ as its density function $\mu(x)$. 

We say $\mu_n$ weakly converges to $\mu$ in $\Pc(\X)$, denoted by $\mu_n \rightharpoonup \mu$, if $\mu_n$ weakly converges to $\mu$ in $\P(\X)$ (in duality with bounded continuous test functions). However, it may not be true that $\lim_{n\to\infty} \int_{\X}c(x,y_0)\mathrm{d}\mu_n=\int_{\X} c(x,y_0)\mathrm{d}\mu$. To ensure this, one needs to further check the uniform integrability $\lim_{b\to\infty} \int_{\X\backslash \{x: c(x,y_0)\leqslant b\}}c(x,y_0)\mathrm{d}\mu(x)=0$ uniformly w.r.t $\{\mu_n\}_{n\in \mathbb{N}}$.

\subsection{Optimal Transport in \texorpdfstring{$\Pc(\X)$}{}}\label{sub:space}

The optimal transport (OT) problem addresses the allocation of resources in a way that minimizes an associated cost function $c$. In this section, we follow \citet[Section 5.1]{Ambrosio2008Gradient} to provide the corresponding classical OT theory under our assumptions on the spaces $\X,\Y$ and the cost $c$, where they are possibly unbounded.

Under our assumptions on the set $\X$ and the cost $c$, by \citet[Theorem 5.1.3 and Remark 5.1.5]{Ambrosio2008Gradient}, to check a set $K\subseteq \Pc(\X)$ is tight or $K\subseteq\Pc(\X)$ is precompact in duality with bounded continuous test functions, it is equivalent to check $\sup_{\mu\in K}\int_{\X}c(x,y_0)\mathrm{d}\mu(x)<\infty$ for some $y_0$. 

Suppose $c(x,y_0)$ is lower semicontinuous and bounded from below, let $\mu_n \rightharpoonup \mu$ in $\Pc(\X)$, and $\sup_n \int_{\X} c(x,y_0)\,\mathrm{d}\mu_n<+\infty$. As a consequence of the Portmanteau theorem \citep[Theorem 1.3.4]{Wellner2023Weak}, $\int_{\X}c(x,y_0)\mathrm{d}\mu \leqslant \liminf \int_{\X} c(x,y_0)\mathrm{d}\mu_n <+\infty$, which yields that $\mu\in \Pc(\X)$.

Given $\mu\in \Pc(\X), \nu\in\Pc(\Y)$, for any $\pi\in \Pi(\mu,\nu)$, apparently $\Pi(\mu,\nu)\subseteq \Pc(\X\times \Y)$. Furthermore, it holds under Assumption \ref{asp:cost_growth} that $K_c(\mu,\nu)\leqslant \int_{\X\times \Y} c(x,y)\mathrm{d}\pi(x,y)\leqslant \int_{\Y}(1+\varepsilon)c(x_0,y)\mathrm{d}\nu +\int_{\X}B_{\varepsilon} c(x,y_0)\mathrm{d}\mu<\infty$. 

\subsubsection{Primal problem and dual problem}

Given two probability measures $\mu,\nu$ supported in $\X,\Y$ respectively, the Monge formulation of the optimal transport problem takes the form
    \begin{equation}
        \label{MP}
        M_c(\mu,\nu)=\inf_{T}\int c(x,T(x))\, \mathrm{d}\mu(x),
    \end{equation}
among all measurable maps $T:\X\to\Y$ which transport $\mu$ to $\nu$, i.e., $\push{T}{\mu}=\nu$. Recall the push-forward measure $\push{T}{\mu}$ is defined via 
    \[
    \int_{\Y}f(y)\, \mathrm{d}\left(\push{T}{\mu}\right)(y)=\int_{\X} f(T(x))\, \mathrm{d}\mu(x).
    \]
that holds for any bounded measurable function $f:\Y\to\R$.

Kantorovich relaxed the Monge problem into a convex optimization problem:
    \begin{equation}
        \label{KP}
        K_c(\mu,\nu)=\inf_{\pi \in \Pi(\mu,\nu)}\left\{\int_{\X\times \Y}c(x,y)\, \mathrm{d} \pi(x,y)\right\},
    \end{equation}
where the infimum is taken over the set $\Pi(\mu,\nu)$ of joint distributions. Note that any admissible map in the Monge problem induces a joint distribution $\push{(\id,T)}{\mu}$, where $\id:\X\to\X$ is the identity map. For the quadratic cost $c(x,y)=\abs{x-y}^2$ on $\X=\Y$, the optimal value $K_c(\mu,\nu)$ is also denoted by $W_2^2(\mu,\nu)$. The space $(\P(\X),W_2)$ of probability measures is referred to as the \textit{2-Wasserstein space}.

For any $\mu\in \Pc(\X),\nu\in \Pc(\Y)$,  as discussed above, $K_c(\mu,\nu)<\infty$ under Assumption \ref{asp:cost_growth}. Thanks to Lemma 4.4 in \cite{Villani2009Old} and Corollary 2.9 in \cite{Ambrosio2021Lectures}, $\Pi(\mu,\nu)$ is compact with respect to the weak topology in $\P(\X\times \Y)$. In addition, the stability results \citep[Theorem 5.20]{Villani2009Old} apply here. More precisely, for any $\mu_n\rightharpoonup \mu \in \Pc(\X)$ and $\nu_n\rightharpoonup \nu \in \Pc(\Y)$, one has $K_c(\mu_n,\nu_n)\to K_c(\mu,\nu)$ and the sequence of optimal plans $\pi_n\in \Pi(\mu_n,\nu_n)$ weakly converges to optimal plan $\pi\in \Pi(\mu,\nu)$ in $\Pc(\X\times \Y)$.

The Kantorovich problem admits a dual formulation:
    \begin{equation}
        \label{DP}
        \sup\left\{\int_{\X} f_1\, \mathrm{d}\mu +\int_{\Y} f_2\, \mathrm{d}\nu\,:\, f_1\in C_b(\X), f_2\in C_b(\Y), f_1(x)+f_2(y)\leqslant c(x,y)\right\}.
    \end{equation}

\subsubsection{\texorpdfstring{$c$}{}-transform}\label{sub:OT}

$c$-transform, arising the study on the dual problem, plays a central role in connecting the Monge problem \eqref{MP}, the Kantorovich problem \eqref{KP} and the dual problem \eqref{DP}. For a comprehensive treatment of $c$-transform and $c$-duality, we refer the interested readers to \cite{Villani2003Topics, Villani2009Old, Santambrogio2015AOT, Ambrosio2021Lectures}. Here, we focus on aspects directly pertinent to our research.

Given $f:\Y\to\R$, the associated $c$-transform $f^c:\X\to [-\infty,\infty)$ of $f$ is defined as
    \begin{equation*}\label{eq:c-tran}
    f^c(x)=\inf_{y\in\Y} c(x,y)-f(y).
    \end{equation*}
Given $g:\X\to \R$, we define $g^c:\Y\to [\infty,\infty)$ analogously. $f$ is said to be $c$-concave if there exists a function $g$ such that $f=g^c$. When $c(x,y)=\frac{1}{2}\abs{x-y}^2$ in $\R^d$, $f$ is $c$-concave if and only if $y\mapsto \frac{1}{2}\abs{y}^2-f(y)$ is convex and lower semi-continuous. 

\begin{lemma}\label{lem:c-tran-prop}
For $a>0$ and any continuous function $c(x,y)$, the following statements hold
\begin{enumerate}[label=\arabic*)]
\item $f^{cc}\geqslant f$, with equality if and only if $f$ is $c$-concave;
\item Scaling laws: $\displaystyle f^{ac}(x):= \inf_{y\in\Y}ac(x,y)-f(y)=a(\frac{f}{a})^c(x)$;
\item The map $f\mapsto f^c$ is concave, i.e., $[(1-t)f_1+tf_2]^c(x)\geqslant (1-t)f_1^c(x)+tf_2^c(x)$;
\item The map $f\mapsto f^c$ is ordering reverse, i.e., $f\geqslant g$ implies $f^c\leqslant g^c$.
\item If $f:\Y\to \R$ is bounded from above by a $c$-affine function $c(x_0,\cdot)+a$. Then $f^c(x_0)>-\infty$. Consequently, if $f: \Y\to [-\infty,\infty)$ is $c$-concave, then both $f$ and $f^c$ are real valued.
\item If $c$ satisfies the main assumptions,  for any $f\in C_b(\Y)$, $y\mapsto c(x,y)-f(y)$ is coercive. Consequently, there exists a minimizer $y_0\in \Y$ such that $f^c(x)=c(x,y_0)-f(y_0)$.
\end{enumerate}
\end{lemma}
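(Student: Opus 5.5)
Most items follow directly from the definition $f^c(x)=\inf_{y}c(x,y)-f(y)$, so I will treat them one at a time, writing $g^c(y)=\inf_x c(x,y)-g(x)$ for the transform in the other variable.

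For 1), fix $y$. For every $x$ we have $f^c(x)\leqslant c(x,y)-f(y)$, hence $c(x,y)-f^c(x)\geqslant f(y)$. Taking the infimum over $x$ gives $f^{cc}\geqslant f$. If $f=f^{cc}$, then $f$ is the $c$-transform of $f^c$, so it is $c$-concave. Conversely, if $f=g^c$, I first show $g^{ccc}=g^c$. Applying $f^{cc}\geqslant f$ to $g$ gives $g^{cc}\geqslant g$, and then the order reversal in 4) gives $g^{ccc}\leqslant g^c$. Applying $f^{cc}\geqslant f$ to $g^c$ gives $g^{ccc}\geqslant g^c$. Hence $f^{cc}=f$. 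So I will prove 4) before 1), or simply inline it.

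For 2), write
\[
\inf_y \bigl(ac(x,y)-f(y)\bigr)=a\inf_y\bigl(c(x,y)-f(y)/a\bigr)=a\,(f/a)^c(x),
\]
which uses $a>0$. For 3), the function $c(x,y)-[(1-t)f_1+tf_2](y)$ splits as $(1-t)\bigl(c-f_1\bigr)+t\bigl(c-f_2\bigr)$ for $t\in[0,1]$. The infimum of a sum is at least the sum of the infima, which gives the claimed inequality. For 4), $f\geqslant g$ gives $c-f\leqslant c-g$ pointwise, and taking infima gives $f^c\leqslant g^c$.

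For 5), the hypothesis $f\leqslant c(x_0,\cdot)+a$ gives $c(x_0,y)-f(y)\geqslant -a$ for all $y$, so $f^c(x_0)\geqslant -a>-\infty$. Now let $f=g^c$ be $c$-concave and not identically $-\infty$, which I take to be the intended convention.
\begin{itemize}
\item Since $f$ takes a finite value at some $y_1$, we get $f^c(x)\leqslant c(x,y_1)-f(y_1)<\infty$ for every $x$.
\item Since $f$ is an infimum, $f(y)\leqslant c(x_1,y)-g(x_1)$ for any $x_1$ with $g(x_1)$ finite. So $f$ is bounded above by a $c$-affine function, and the first part gives $f^c(x_1)>-\infty$.
\end{itemize}
To get finiteness everywhere I would use that $g\leqslant g^{cc}=f^c$ and argue symmetrically. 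This step is the main subtlety: the statement implicitly needs $g$ (equivalently $f^c$) to be finite, and I would make that assumption explicit rather than attempt more.

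For 6), take $f\in C_b(\Y)$. Then $c(x,y)-f(y)\geqslant c(x,y)-\|f\|_\infty$. By Assumption \ref{asp:cost_compact}, every sublevel set $\{y: c(x,y)-f(y)\leqslant b\}$ is contained in the compact set $\{y: c(x,y)\leqslant b+\|f\|_\infty\}$. It is also closed in $\Y$, because $c(x,\cdot)-f$ is continuous. A closed subset of a compact set is compact, so the map is coercive. A continuous function with compact, nonempty sublevel sets attains its infimum, which gives the minimizer $y_0$.
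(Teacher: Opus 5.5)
Your arguments for 1)--4), 6) and for the first sentence of 5) are correct and coincide with the paper's. For 1) the paper only cites Santambrogio, and your $g^{ccc}=g^c$ argument is the standard proof behind that citation. The gap is in the ``consequently'' clause of 5).

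Your treatment of $f^c$ there is fine, and it needs no extra assumption. By the paper's definition $f=g^c$ with $g:\X\to\R$ real-valued, so $f^c=g^{cc}\geqslant g>-\infty$ everywhere. Your first bullet gives $f^c<\infty$ once $f\not\equiv-\infty$.

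The step that fails is ``argue symmetrically'' for $f$ itself. No real-valued minorant of $f$ is available. Writing $f=(f^c)^c$ only gives $f(y)>-\infty$ at those $y$ where $f^c\leqslant c(\cdot,y)+a$ for some $a$, which is exactly what you are trying to prove. In fact the claim is false even when $g$ and $f^c$ are both real-valued, and even under the main assumptions. Take $\X=\Y=\R$, $c(x,y)=\frac12|x-y|^2$ and $g(x)=\frac12x^2-|x|$. Then
\[
f(y)=g^c(y)=\tfrac12y^2+\inf_{x\in\R}\bigl(|x|-xy\bigr)=\begin{cases}\tfrac12y^2, & |y|\leqslant 1,\\ -\infty, & |y|>1,\end{cases}
\]
while $f^c=g$ is real-valued. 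So making finiteness of $g$ explicit does not rescue the statement.

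The paper's own proof has the same defect. It asserts $f\leqslant c(x,\cdot)+a$ ``for any $x$ and $a$'', but this holds only for the $c$-affine functions in the family defining $f$. Its ``repeat this argument'' step for $f^{cc}$ is circular in the same way as yours.

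A correct version needs an extra hypothesis that bounds $f$ from below. Two options:
\begin{itemize}
\item $g$ bounded above. Since $c\geqslant0$ under the main assumptions, $f(y)=\inf_x\bigl(c(x,y)-g(x)\bigr)\geqslant-\sup g$.
\item $f=h^{cc}$ with $h\in C_b(\Y)$, which covers the potentials actually used later. Then $f\geqslant h$.
\end{itemize}
In either case your bullets together with $f^c\geqslant g$ finish the argument. Alternatively, weaken the conclusion to ``$f^c$ is real-valued whenever $f\not\equiv-\infty$''. Your convention $f\not\equiv-\infty$ is itself necessary: with the same cost, $g(x)=|x|^3$ gives $f\equiv-\infty$.
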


A direct consequence is that we can rephrase \eqref{DP} as a dual problem over $c$-concave and bounded continuous functions:
    \begin{equation}\label{DP2}
    D_c(\mu,\nu)=\sup\int_{\X} f^c(x)\, \mathrm{d}\mu(x)+\int_{\Y}f(y)\, \mathrm{d}\nu(y).
    \end{equation}
The optimal $c$-concave function $f$ is referred as the (optimal) $c$-concave Kantorovich potential for the transport of $\nu$ to $\mu$.

Following part 6) in \Cref{lem:c-tran-prop}, and noting that $f^c$ shares the same modulus of continuity as $c$ \citep[see][Section 1.2]{Santambrogio2015AOT},
we can see the $c$-transform induces a map.

\begin{lemma}[\cite{Gangbo2004introduction, Santambrogio2015AOT}]\label{lem:Gangbo}
For any Lipschitz continuous function $c(x,y)$ and any $f(y)\in C_b(\Y)$, if $y\mapsto \nabla_x c(x,y)$ is injective, then
\begin{enumerate}[label=\arabic*)]
    \item Fixed $y_0\in\Y$, $\inf_{x\in\X}c(x,y_0)-f^c(x)$ has a minimizer $x_0$ such that  
    \[\nabla_x c(x_0,y_0)=\nabla f^c(x_0),\]
    provided that $f^c(x)$ is differentiable at $x_0$. Moreover, it induces a map
    \begin{equation}\label{eq:map}
        y_0=T_{f^c,c}(x_0)=(\nabla_x c(x_0,\cdot))^{-1}(\nabla f^c(x_0)).
    \end{equation}
    \item Given any continuous function $\phi(y)$ on $\Y$, we have
    \[\lim_{\varepsilon\to 0}\dfrac{(f+\varepsilon \phi)^c(x)-f^c(x)}{\varepsilon}=-\phi(T_{f^c,c}(x)).\]
\end{enumerate}
\end{lemma}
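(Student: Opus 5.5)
The plan is to prove the two parts in order: part 1) by a first-order optimality argument, and part 2) by an envelope (Danskin-type) argument that reuses the map $T_{f^c,c}$ produced in part 1).

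\textbf{Part 1).} Fix $y_0\in\Y$. First I will show that $x\mapsto c(x,y_0)-f^c(x)$ attains its infimum. By definition $f^c(x)\leqslant c(x,y_0)-f(y_0)$, so
\[
c(x,y_0)-f^c(x)\geqslant f(y_0)
\]
for every $x$. For existence of a minimizer I will use the standing setting: $f$ is bounded, so $f^c$ lies between $\inf_y c(x,y)-\sup f$ and $\inf_y c(x,y)-\inf f$, and $f^c$ inherits the modulus of continuity of $c$. Combined with the coercivity from Assumption \ref{asp:cost_compact} (as in part 6) of \Cref{lem:c-tran-prop}), this should give a minimizing sequence confined to a compact sublevel set, and hence a minimizer $x_0$. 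I expect this compactness step to be the delicate point.

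An alternative route avoids it. Start from a pair where the $c$-transform is attained: for $x_0$, take $y_0$ attaining $f^c(x_0)=c(x_0,y_0)-f(y_0)$, using part 6). Then for every $x$,
\[
f^c(x)\leqslant c(x,y_0)-f(y_0),
\]
with equality at $x_0$, so $x_0$ minimizes $c(\cdot,y_0)-f^c$. I will present the statement in this form, which is what is actually used.

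Once $x_0$ is a minimizer and $f^c$ is differentiable there, the first-order condition together with $c\in C^1$ gives $\nabla_x c(x_0,y_0)=\nabla f^c(x_0)$. Since $y\mapsto\nabla_x c(x_0,y)$ is injective, $y_0$ is uniquely determined by $\nabla f^c(x_0)$. This yields the formula \eqref{eq:map}, and shows the map is well defined independently of which minimizer $y_0$ was chosen.

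\textbf{Part 2).} This is an envelope computation. Let $y_\varepsilon$ attain $(f+\varepsilon\phi)^c(x)$, and let $y_0=T_{f^c,c}(x)$ attain $f^c(x)$; the latter is unique when $f^c$ is differentiable at $x$, by part 1). Testing each infimum with the other's minimizer gives the sandwich
\[
-\varepsilon\phi(y_\varepsilon)\leqslant (f+\varepsilon\phi)^c(x)-f^c(x)\leqslant-\varepsilon\phi(y_0).
\]
It then suffices to show $y_\varepsilon\to y_0$. For this, I will confine all $y_\varepsilon$ to a compact sublevel set of $c(x,\cdot)$ via coercivity, using boundedness of $f$ and, for small $\varepsilon$, of $\varepsilon\phi$ on that set (if $\phi$ is unbounded, I first localize it). Any limit point of $(y_\varepsilon)$ is then a minimizer of $c(x,\cdot)-f$, and uniqueness of that minimizer forces $y_\varepsilon\to y_0$. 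Continuity of $\phi$ now gives the limit $-\phi(T_{f^c,c}(x))$.

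The main obstacle is the convergence $y_\varepsilon\to y_0$. It needs both the compactness and the uniqueness of the minimizer, and uniqueness holds only at points where $f^c$ is differentiable. The statement should therefore be read as holding at such points, which is $\mu$-a.e. when $\mu$ is absolutely continuous, since $f^c$ is locally Lipschitz.
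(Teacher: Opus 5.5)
The approach matches the standard argument the paper leans on. The paper gives no proof beyond the citations and the remark that part 6) of \Cref{lem:c-tran-prop} plus the regularity of $f^c$ yields the map. Your part 1) chooses $y_0$ attaining $f^c(x_0)$, notes that $x_0$ then minimizes $c(\cdot,y_0)-f^c$, and inverts the first-order condition by twist. Your part 2) uses the two-sided envelope bound, compactness, and uniqueness of the minimizer at differentiability points. For $\phi\in C_b(\Y)$ this is correct in both parts.

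\textbf{The gap.} The step that fails is the parenthetical \emph{``if $\phi$ is unbounded, I first localize it''}. The quantity $(f+\varepsilon\phi)^c(x)$ is an infimum over all of $\Y$, so cutting $\phi$ off changes it. Also, confining $y_\varepsilon$ to a sublevel set of $c(x,\cdot)$ requires controlling $\varepsilon\phi$ \emph{off} that set, not on it. No repair is possible for arbitrary continuous $\phi$: for $c=\frac12\abs{x-y}^2$ on $\R^d$ and $\phi(y)=\abs{y}^4$, one has $(f+\varepsilon\phi)^c\equiv-\infty$ for every $\varepsilon>0$.

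Part 2) therefore has to be stated for $\phi\in C_b(\Y)$, which is all the paper needs, since stationary points are defined through bounded continuous perturbations. Then $y_\varepsilon$ exists by part 6) applied to $f+\varepsilon\phi$. Combining $\abs{(f+\varepsilon\phi)^c-f^c}\leqslant\abs{\varepsilon}\norm{\phi}_\infty$ with $\abs{f}\leqslant M$ gives $c(x,y_\varepsilon)\leqslant f^c(x)+M+2\abs{\varepsilon}\norm{\phi}_\infty$. This puts all $y_\varepsilon$ in one compact set, and the rest of your argument goes through unchanged.

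\textbf{Part 1).} Reversing the quantifiers is forced, not merely convenient, so drop the compactness route rather than calling it delicate. If $f^c$ is differentiable everywhere, part 1)'s own first-order condition shows that any minimizer $x_0$ of $c(\cdot,y_0)-f^c$ satisfies $y_0=T_{f^c,c}(x_0)$. Hence for $y_0$ outside the range of $T_{f^c,c}$ there is no minimizer at all.

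For example, $c(x,y)=x^2+y^2+\frac{1}{10}\arctan x\arctan y$ on $\R\times\R$ satisfies the main assumptions. Take $f(y)=10\max\{0,1-\abs{y-1}\}$. Then $y=1$ attains $f^c(x)$ for every $x$, so $f^c(x)=x^2-9+\frac{\pi}{40}\arctan x$ is smooth and $T_{f^c,c}\equiv 1$. For any $y_0\neq 1$ the infimum over $x$ is not attained.

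The version you actually prove is: for each differentiability point $x_0$ of $f^c$, a $y_0$ from part 6) for which $x_0$ is a minimizer. That is exactly what defines $T_{f^c,c}$ and what is used later.
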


Given any differentiable functions $f:\Y\to\R$, the injectivity from the bi-twist condition ensures two maps. As we consider a uniform cost $c$ among any term $K_c(\mu_i,\nu)$ in this paper, we omit the second $c$ in the subscript of \eqref{eq:map} and use $T_{f}:\Y\to\X$ and $ T_{f^c}:\X\to \Y$:
\[T_{f}(y_0)=(\nabla_y c(\cdot,y_0))^{-1}(\nabla f(y_0)),\qquad\textrm{and}\qquad T_{f^c}(x_0)= (\nabla_x c(x_0,\cdot))^{-1}(\nabla f^c(x_0)),\]
though two inverse functions $(\nabla_y c(\cdot, y_0))^{-1}, (\nabla_x c(x_0,\cdot))^{-1}$ are used.

We now outline how $c$-transform connects the dual and primal problem. Our presentation follows from \citep{Villani2009Old}, and the main assumptions made here fall within the general assumptions used in Theorem 10.38 there for unbounded cases. The development builds on the celebrated Knott-Smith optimality condition \citep{Knott1984Optimality} and Brenier theorem \citep{Brenier1991Polar}, and on their subsequent extensions to more general Riemannian settings \citep{Mccann2001Polar, Gangbo1996Geometry}.

\begin{theorem}[\cite{Brenier1991Polar, Mccann2001Polar}]\label{thm:Brenier}
Under our main assumptions on the cost $c$ and sets $\X,\Y$, there exists a locally Lipschitz and $c$-concave function $f:\Y\to\R$ such that:
    \begin{enumerate}[label=\arabic*)]
    \item $f$ is optimal to the dual problem \eqref{DP2}. And the strong duality holds $K_c(\mu,\nu)=D_c(\mu,\nu)$. If $\pi$ is optimal to the Kantorovich problem \eqref{KP}, then 
    \begin{equation}\label{eq:ot-optimality}
        \pi(x,y)(f^c(x)+f(y)-c(x,y))=0.
    \end{equation}
    \item Let $\mu\in \Pc^{\textrm{AC}}(\X)$, then the map $y=T_{f^c}(x)$ defined in \eqref{eq:map} is the unique optimal transport map transporting from $\mu$ to $\nu$, i.e., the Monge map to \eqref{MP}. Furthermore, it induces $\pi=\push{(\id,T_{f^c})}{\mu}$ which is the optimal solution to the Kantorovich problem \eqref{KP}. Consequently, $L_c(\mu,\nu)=K_c(\mu,\nu)=D_c(\mu,\nu)$. 
    \end{enumerate}
\end{theorem}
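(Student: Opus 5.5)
The statement is the classical Kantorovich/Brenier–McCann theory adapted to the cost $c$. The plan is to reduce it to the general existence and duality theorems in \cite{Villani2009Old} (Theorems 5.10 and 10.38), checking that the main assumptions supply their hypotheses. The argument has three parts: strong duality with an optimal potential, the complementary slackness relation \eqref{eq:ot-optimality}, and the existence and uniqueness of the Monge map.

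\textbf{Step 1 (strong duality).} Since $c\geqslant 0$ is continuous, $K_c(\mu,\nu)=D_c(\mu,\nu)$ holds by \citet[Theorem 5.10]{Villani2009Old}, with the supremum taken over $C_b$ pairs. A minimizing plan $\pi$ exists by the weak compactness of $\Pi(\mu,\nu)$ recalled in Section~\ref{sub:space}, together with lower semicontinuity of $\pi\mapsto\int c\,\mathrm{d}\pi$.

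\textbf{Step 2 (attainment of the dual).} This step needs a maximizing $c$-concave potential, and I expect it to be the main obstacle because $\X$ and $\Y$ are unbounded. Theorem 5.10 gives attainment when $c(x,y)\leqslant a(x)+b(y)$ with $a\in L^1(\mu)$ and $b\in L^1(\nu)$. Taking $\varepsilon=1$ in \eqref{eq:cost_epsilon_property} gives $c(x,y)\leqslant 2c(x_0,y)+B_1c(x,y_0)$, and since $\mu\in\Pc(\X)$ and $\nu\in\Pc(\Y)$ this bound is integrable. Hence there is a $c$-concave $f$ with $f^c\in L^1(\mu)$ and $f\in L^1(\nu)$ that attains $D_c$.

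By \Cref{lem:c-tran-prop} 5), $f$ and $f^c$ are real valued. Local Lipschitz continuity follows because $c\in C^2$: the function $f$ is an infimum of the family $y\mapsto c(x,y)-f^c(x)$. Using the coercivity in Assumption~\ref{asp:cost_compact}, the infimum over $x$ can be restricted to a compact set when $y$ ranges over a compact set. On such a set the family is uniformly Lipschitz, so the infimum is locally Lipschitz.

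\textbf{Step 3 (complementary slackness).} Write
\[
0=\int (c-f^c-f)\,\mathrm{d}\pi .
\]
The integrand is nonnegative, so it vanishes $\pi$-a.e.; this is \eqref{eq:ot-optimality}.

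\textbf{Step 4 (Monge map).} Assume now that $\mu$ is absolutely continuous. Since $f^c$ is locally Lipschitz, Rademacher's theorem makes it differentiable $\mu$-a.e.

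Fix $(x,y)\in\operatorname{spt}\pi$ with $x$ a point of differentiability. By Step 3, $x$ minimizes $x'\mapsto c(x',y)-f^c(x')$, so
\[
\nabla f^c(x)=\nabla_x c(x,y).
\]
The twist condition then gives $y=T_{f^c}(x)$, as in \Cref{lem:Gangbo}. Therefore $\pi$ is concentrated on the graph of $T_{f^c}$, so $\pi=\push{(\id,T_{f^c})}{\mu}$ and $\push{T_{f^c}}{\mu}=\nu$.

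The Monge value satisfies $L_c\geqslant K_c$ trivially, and the plan just constructed shows $L_c\leqslant K_c$, so $L_c=K_c=D_c$.

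For uniqueness, every optimal plan is supported where $f^c+f=c$, so every optimal plan equals $\push{(\id,T_{f^c})}{\mu}$. Any optimal map $T$ induces the optimal plan $\push{(\id,T)}{\mu}$, which therefore coincides with this one, and so $T=T_{f^c}$ $\mu$-a.e.

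A subtle point remains in Step 4: $T_{f^c}$ must be defined independently of the particular $\pi$. This holds because the same $f$ works for all optimal plans, since by Step 3 the dual optimizer certifies every primal optimizer.
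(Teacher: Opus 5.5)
Steps~1 and~3 are fine, but there is a genuine gap in Step~2, where you claim that coercivity lets you restrict the infimum $f(y)=\inf_x[c(x,y)-f^c(x)]$ to a compact set of $x$. Assumption~\ref{asp:cost_compact} controls only $c(\cdot,y)$, and part~6) of \Cref{lem:c-tran-prop} gives coercivity only for \emph{bounded} potentials. The potential from Theorem~5.10(iii) is merely integrable and typically grows like $c$, so $x\mapsto c(x,y)-f^c(x)$ need not be coercive, or even bounded below.

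The claim cannot be rescued. Take $c=\frac12|x-y|^2$ on $\X=\Y=\R^d$, which satisfies all the main assumptions, with $\mu$ the standard Gaussian and $\nu=\delta_a$. Optimality together with $f^c\leqslant c(\cdot,a)-f(a)$ forces $f^c=c(\cdot,a)-f(a)$ $\mu$-a.e. Hence for $y\neq a$ and $\mu$-a.e.\ $x$ you get $f(y)\leqslant c(x,y)-f^c(x)=\frac{|y|^2-|a|^2}{2}-x\cdot(y-a)+f(a)$. Since $\mu$ charges every half-space, $f(y)=-\infty$. So no real-valued optimal $c$-concave $f$ exists, let alone a locally Lipschitz one.

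Your appeal to \Cref{lem:c-tran-prop}~5) does not give finiteness either. That argument only yields $f^c(x')>-\infty$ at the points $x'$ of $c$-affine functions lying above $f$; for the quadratic cost, $f=c(x',\cdot)$ is $c$-concave with $f^c=-\infty$ off $\{x'\}$. Step~4 inherits the problem. You apply Rademacher to $f^c$, whose local Lipschitz continuity you never argued, and $f^c$ can equal $-\infty$ on open subsets of $\X$. For instance, in one dimension with the quadratic cost, $\mu$ uniform on $(0,1)$ and $\nu=\push{T}{\mu}$ with $T(x)=-x^{-1/4}$ force $f^c=-\infty$ on $(-\infty,0)$ for every optimal pair.

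The missing idea is the localization in the proof of \citet[Theorem~10.38]{Villani2009Old}. That is the result the paper defers to, and you announce it but never use it. It runs as follows.

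Since $f^c\in L^1(\mu)$ and $f\in L^1(\nu)$, both are finite a.e. Choose increasing compact sets $K_\ell$ on which $f$ is bounded, with $\nu(\bigcup_\ell K_\ell)=1$, and set $g_\ell(x)=\inf_{y\in K_\ell}[c(x,y)-f(y)]$. Each $g_\ell$ is locally Lipschitz, satisfies $g_\ell\geqslant f^c$, and $g_\ell(x)=f^c(x)$ whenever $f^c(x)+f(y)=c(x,y)$ for some $y\in K_\ell$.

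At such an $x$ where $g_\ell$ is differentiable ($\mu$-a.e.\ by Rademacher), $x$ minimizes $c(\cdot,y)-g_\ell$. So $\nabla_x c(x,y)=\nabla g_\ell(x)$, and the twist condition determines $y$. Running over all $\ell$ puts every optimal plan on one common graph. After that, your identification $L_c=K_c=D_c$ and your uniqueness argument go through as written.

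The price is that part~1) only holds with $f$ and $f^c$ finite $\nu$- and $\mu$-a.e. In part~2), $\nabla f^c$ must in general be read as an approximate gradient; it coincides with $\nabla g_\ell$ at density points of the corresponding contact set.
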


\Cref{lem:Gangbo} and \Cref{thm:Brenier} shed light on using the dual formulation and its solution to solve the primal problem. \cite{Jacobs2020BF} introduced a $\dot{H}^1$ gradient ascent method in a back-and-forth fashion to solve the dual formulation, thus solving the optimal transport efficiently and exactly. For notational convenience, we consider two functionals $J_{\mu}(f)=\int_{\X} f\, \mathrm{d}\mu$ and $J^c_{\mu}(f)=\int_{\X}f^{c}\, \mathrm{d}\mu$ associated with the dual problem. The first variation and concavity of the dual functional were originally established in \citep{Gangbo1994Elementary, Gangbo1996Geometry}. Here we adopt the statements presented in Theorem 1 and Lemma 3 in \citet{Jacobs2020BF}.

\begin{lemma}\label{lem:OT-dual-optimality}
Under the main assumptions on the cost $c$ and the sets $\X,\Y$, let $\mu\in \Pc^{\textup{AC}}(\X)$, for any continuous function $f$, 
\begin{enumerate}[label=\arabic*)]
    \item The map $f\mapsto J_{\mu}(f)$ is linear, while the map $f\mapsto J_{\mu}^c(f)$ is concave.
    \item The first variation are $\delta J_{f;\, \mu}(\phi)=\int_{\X} \phi\, \mathrm{d}\mu$, and $\delta J^c_{f;\, \mu}(\phi)=\int_{\X} \phi\, \mathrm{d}[-\left(\push{T_{f^c}}{\mu}\right)]$. In short, $\delta J_{f;\, \mu}=\mu$ and $\delta J_{f;\, \mu}^c=-\left(\push{T_{f^c}}{\mu}\right)$.
    \item $f$ is optimal to the dual problem \eqref{DP2} if and only if $\delta J_{f;\mu}^c+\delta J_{f;\nu}=\nu -\push{\left(T_{f^c}\right)}{\mu}=0$. Moreover, there exists a joint distribution $P\in\Pi(\mu,\nu)$ satisfying \eqref{eq:ot-optimality}. In fact, $P$ is an optimal transport plan to the Kantorovich problem \eqref{KP}.
\end{enumerate}
\end{lemma}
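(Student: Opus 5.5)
The plan is to prove the three parts of \Cref{lem:OT-dual-optimality} in order, taking \Cref{lem:c-tran-prop} and \Cref{lem:Gangbo} as the main tools and \Cref{thm:Brenier} for the last part.

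\emph{Part 1).} Linearity of $J_\mu$ is immediate. For $J^c_\mu$, integrate the pointwise concavity inequality of \Cref{lem:c-tran-prop} 3) against $\mu$. To make the integrals meaningful I will work in the class where $f^c$ is real valued and $\mu$-integrable. \Cref{lem:c-tran-prop} 5) gives real-valuedness. For integrability I use the growth Assumption \ref{asp:cost_growth} to get
\[
f^c(x)\le c(x,y_0)-f(y_0)\le (1+\varepsilon)c(x,y_0)+B_\varepsilon c(x_0,y_0)-f(y_0),
\]
which is integrable because $\mu\in\Pc(\X)$. A lower bound of the same type holds when $f$ is bounded, as in \eqref{DP2}.

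\emph{Part 2).} The formula for $\delta J_{f;\mu}$ is trivial. For $J^c_\mu$, apply \Cref{lem:Gangbo} 2) pointwise. Since $f^c$ is locally Lipschitz (it shares the modulus of continuity of $c$), Rademacher's theorem and $\mu\in\Pc^{\textup{AC}}(\X)$ give differentiability $\mu$-a.e. So for $\mu$-a.e.\ $x$ the difference quotient $\varepsilon^{-1}[(f+\varepsilon\phi)^c(x)-f^c(x)]$ tends to $-\phi(T_{f^c}(x))$.

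To pass the limit under the integral I use dominated convergence. For bounded $\phi$, the order-reversing property \Cref{lem:c-tran-prop} 4) gives
\[
f^c-\varepsilon\sup\phi\le (f+\varepsilon\phi)^c\le f^c-\varepsilon\inf\phi,
\]
so the quotients are bounded by $\|\phi\|_\infty$. Hence
\[
\delta J^c_{f;\mu}(\phi)=-\int \phi\circ T_{f^c}\,\mathrm{d}\mu=-\int\phi\,\mathrm{d}\big(\push{T_{f^c}}{\mu}\big).
\]
If $\phi$ is only continuous and unbounded, I first restrict to $C_b$ test functions, which suffices to identify the measure.

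\emph{Part 3).} The dual objective $\Phi(f)=J^c_\mu(f)+J_\nu(f)$ is concave by 1), and it is G\^ateaux differentiable with derivative $\nu-\push{T_{f^c}}{\mu}$ by 2).
\begin{itemize}
\item If $f$ is optimal, then $\varepsilon\mapsto\Phi(f+\varepsilon\phi)$ attains its maximum at $0$, so the derivative vanishes for all $\phi\in C_b(\Y)$. This forces $\nu=\push{T_{f^c}}{\mu}$.
\item Conversely, if the derivative vanishes, concavity gives
\[
\Phi(g)\le\Phi(f)+\tfrac{d}{d\varepsilon}\Phi\big(f+\varepsilon(g-f)\big)\big|_{\varepsilon=0}=\Phi(f),
\]
using that a concave function of one variable lies below its tangent line at $0$. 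So $f$ is optimal.
\end{itemize}
For the coupling, set $P=\push{(\id,T_{f^c})}{\mu}\in\Pi(\mu,\nu)$. By \Cref{lem:Gangbo} 1), $T_{f^c}(x)$ is a minimizer in the $c$-transform. Hence $f^c(x)+f(T_{f^c}(x))=c(x,T_{f^c}(x))$ for $\mu$-a.e.\ $x$, which is \eqref{eq:ot-optimality}. Integrating gives $\int c\,\mathrm{d}P=\Phi(f)\le K_c(\mu,\nu)$. Weak duality $\Phi\le K_c$ for any admissible pair then yields optimality of $P$, consistent with \Cref{thm:Brenier}.

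\emph{Main obstacle.} I expect the hardest step to be justifying the exchange of limit and integral in 2), together with the identification of $T_{f^c}(x)$ as the \emph{unique} minimizer a.e. The first-order condition $\nabla_x c(x,y)=\nabla f^c(x)$ plus the twist condition gives uniqueness wherever $f^c$ is differentiable. Existence of the minimizer comes from coercivity, \Cref{lem:c-tran-prop} 6). Uniqueness is what makes the one-sided directional derivatives of the $c$-transform agree.
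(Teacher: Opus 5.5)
Your proposal is correct and matches the standard argument for this lemma: the paper gives no proof of its own but imports the statement from Gangbo and from Theorem 1 and Lemma 3 of Jacobs--L\'eger, which proceed as you do. There the minimizer in the $c$-transform is unique at points where $f^c$ is differentiable (by the twist condition), dominated convergence is justified by $|(f+\varepsilon\phi)^c-f^c|\leqslant|\varepsilon|\,\|\phi\|_\infty$, and optimality follows from concavity together with the graph coupling $\push{(\id,T_{f^c})}{\mu}$. One step deserves to be stated explicitly: $f+\varepsilon\phi$ need not be $c$-concave, so justify $\Phi(f+\varepsilon\phi)\leqslant\Phi(f)$ by weak duality $\Phi(g)\leqslant K_c(\mu,\nu)$ for every bounded $g$, combined with $\Phi(f)=D_c(\mu,\nu)=K_c(\mu,\nu)$ from \Cref{thm:Brenier}.
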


\begin{lemma}[Theorem 1.13 and Remark 1.14 in \cite{Ambrosio2013User}]\label{lem:Kan-examples}
Assume measurable map $T$ from $\X$ to $\Y$ is in the form of \eqref{eq:map} for some continuous function $f$, that is, $T(x)=(\nabla_x c(x,\cdot))^{-1}(\nabla f^c(x))$. For every $\mu\in\Pc(\X)$ such that $c(x,y)\leqslant g(x)+f(y)$ for some $g\in L_1(\mu)$ and $f\in L_1(\push{T}{\mu})$, then $f$ is the unique minimizer to the dual problem \eqref{DP2}
    \[
    K_c(\mu,\push{T}{\mu})=\int_{\X}c(x,T(x))\, \mathrm{d}\mu(x)=\int_{\X} f^c(x)\, \mathrm{d}\mu+\int_{\X}f(T(x))\, \mathrm{d}\mu=D_{c}(\mu,\push{T}{\mu}).
    \]
Here, $f$ is uniquely defined on the region where $\push{T}{\mu}$ has positive density, up to constants.
\end{lemma}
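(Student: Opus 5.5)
The plan is a direct verification of the chain of (in)equalities, followed by an inspection of when the duality bound is attained. First, for any $f$ and any $\pi\in\Pi(\mu,\push{T}{\mu})$, the definition of the $c$-transform gives $f^c(x)+f(y)\leqslant c(x,y)$. Integrating against $\pi$ yields
\[
\int f^c\,\mathrm{d}\mu+\int f\,\mathrm{d}(\push{T}{\mu})\leqslant \int c\,\mathrm{d}\pi .
\]
Taking the infimum over $\pi$ and the supremum over $f$ gives weak duality $D_c\leqslant K_c$. The integrability hypothesis $c(x,y)\leqslant g(x)+f(y)$, with $g\in L_1(\mu)$ and $f\in L_1(\push{T}{\mu})$, is exactly what is needed for these integrals to be well defined and finite. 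In particular, $f^c(x)\geqslant$ some bound and $f^c\leqslant c(x,T(x))-f(T(x))\leqslant g$, so $f^c\in L_1(\mu)$ as well.

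The key step is the pointwise identity $f^c(x)+f(T(x))=c(x,T(x))$ for $\mu$-a.e.\ $x$. By \Cref{lem:Gangbo} 1), at every point $x$ where $f^c$ is differentiable, the infimum defining the $c$-transform is attained at some $y$ satisfying $\nabla f^c(x)=\nabla_x c(x,y)$. Twist injectivity then forces $y=(\nabla_x c(x,\cdot))^{-1}(\nabla f^c(x))=T(x)$, so equality holds in the $c$-transform at $(x,T(x))$. Since $f^c$ is locally Lipschitz (it inherits the modulus of $c$), Rademacher gives differentiability Lebesgue-a.e. Here I would need $\mu$ to be negligible on the non-differentiability set. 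This is the main obstacle, because the statement only assumes $\mu\in\Pc(\X)$. I would handle it by reading the hypothesis ``$T$ is of the form \eqref{eq:map}'' as including that $T$ is defined $\mu$-a.e., i.e.\ that $f^c$ is differentiable $\mu$-a.e. That is the framing of the cited Theorem 1.13 in \cite{Ambrosio2013User}, where the hypothesis is that the plan lies in the $c$-superdifferential.

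With the identity in hand, integrating gives
\[
\int c(x,T(x))\,\mathrm{d}\mu=\int f^c\,\mathrm{d}\mu+\int f(T)\,\mathrm{d}\mu .
\]
The right-hand side is at most $D_c$, the left-hand side is at least $K_c$, and weak duality closes the chain: $K_c=\int c(x,T)\,\mathrm{d}\mu=D_c$. Hence $f$ attains the dual supremum and $(\id,T)_\#\mu$ is an optimal plan.

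For uniqueness, take any other optimal dual potential $\tilde f$. By \eqref{eq:ot-optimality} applied to the optimal plan $(\id,T)_\#\mu$, we get $\tilde f^c(x)+\tilde f(T(x))=c(x,T(x))$ $\mu$-a.e. So for $\mu$-a.e.\ $x$, the function $x'\mapsto c(x',T(x))-\tilde f^c(x')$ is minimized at $x'=x$. Where $\tilde f^c$ is differentiable, this gives $\nabla\tilde f^c(x)=\nabla_x c(x,T(x))=\nabla f^c(x)$. Hence $\nabla(\tilde f^c-f^c)=0$ on the support region, and on each connected component of the set where $\push{T}{\mu}$ has positive density (the hypothesis is phrased as a region) the potentials $f^c$ and $\tilde f^c$ differ by a constant. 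Transferring back through $f(T(x))=c(x,T(x))-f^c(x)$ shows that $f$ and $\tilde f$ agree up to that constant on the image region. Here I expect some care is needed to use connectedness of the positive-density region; I would state the uniqueness up to constants per connected component.
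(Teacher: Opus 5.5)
Your proof of the duality identity is correct. It is essentially the implication ``plan supported in the $c$-superdifferential of a $c$-concave function $\Rightarrow$ optimal'' from the fundamental theorem of optimal transport, which the paper cites instead of giving a proof. You check $f^c(x)+f(T(x))=c(x,T(x))$ via the first-order condition and twist, and weak duality closes the chain. Reading ``$T$ is of the form \eqref{eq:map}'' as including $\mu$-a.e.\ differentiability of $f^c$ is the right interpretation.

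One minor correction: attainment of $\inf_y c(x,y)-f(y)$ comes from part 6) of \Cref{lem:c-tran-prop}, which covers bounded $f$. It does not come from part 1) of \Cref{lem:Gangbo}, which fixes $y$ and minimizes over $x$. For an unbounded continuous $f$ you must either assume attainment or derive it from coercivity of $y\mapsto c(x,y)-f(y)$.

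The uniqueness step has a genuine gap: it is carried out in the wrong space.
\begin{itemize}
\item You obtain $\nabla\tilde f^c(x)=\nabla f^c(x)$ only at points $x$ where $\tilde f^c$ is differentiable. That set has full Lebesgue measure, but not full $\mu$-measure, since $\mu$ is not assumed absolutely continuous.
\item Even granting the gradient identity, constancy of $\tilde f^c-f^c$ needs an open connected subset of $\X$ on which $\mu$ has positive density.
\item The region in the statement is where $\push{T}{\mu}$ has positive density, which is a subset of $\Y$. So your claim that $f^c$ and $\tilde f^c$ differ by a constant on its components mixes the two spaces.
\item Transferring through $f(T(x))=c(x,T(x))-f^c(x)$ would only give constancy of $\tilde f-f$ on images $T(A)$ of connected sets $A\subset\X$ where $\mu$ has positive density. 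That is neither assumed nor the claimed statement.
\end{itemize}

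The fix is to argue directly on $\Y$, with the potentials $f,\tilde f$ taken $c$-concave as in \eqref{DP2}. These are locally Lipschitz, hence Lebesgue-a.e.\ differentiable.
\begin{itemize}
\item For $\push{(\id,T)}{\mu}$-a.e.\ $(x,y)$ you have $f^c(x)+f(y)=c(x,y)$ by your pointwise identity, and $\tilde f^c(x)+\tilde f(y)=c(x,y)$ by \eqref{eq:ot-optimality}.
\item So $y$ minimizes both $y'\mapsto c(x,y')-f(y')$ and $y'\mapsto c(x,y')-\tilde f(y')$ over the open set $\Y$. Hence $\nabla f(y)=\nabla_y c(x,y)=\nabla\tilde f(y)$ wherever both are differentiable.
\item Let $U$ be an open connected set where $\push{T}{\mu}$ has positive density. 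Then $\push{T}{\mu}$-null subsets of $U$ are Lebesgue-null, so $\nabla(f-\tilde f)=0$ a.e.\ on $U$.
\item Therefore the locally Lipschitz function $f-\tilde f$ is constant on $U$.
\end{itemize}
This argument uses neither absolute continuity of $\mu$ nor the twist in $y$.
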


\section{The Signed Barycenter Problem}\label{sec:primal}

Given an affine combination of weights $\left\{a_{i}\right\}_{i=1}^m\subseteq \R$ satisfying $\sum_{i=1}^m a_{i} = 1$, along with a set of probability measures $\left\{\mu_{i}\right\}_{i=1}^m\subseteq \Pc(\X)$, the barycenter functional
    \begin{equation}\label{eq:bary-func}
    \mathscr{B}(\nu) = \sum_{i=1}^m a_{i} K_{c}(\mu_{i},\nu)
    \end{equation}
is also referred to as the \textit{primal functional}. The objective of the primal problem is to minimize $\mathscr{B}(\nu)$ in the space $\Pc(\Y)$. Any minimizer is referred as a \textit{signed barycenter}. Without loss of generality, we assume weights are nonzero and decompose the index set $I=\left\{1,\ldots,m\right\}=I_+ \cup I_{-}$ into sets of indexes associated with positive or negative weights.  

\begin{theorem}[Existence]\label{thm:existence}

Under our assumptions on the sets $\X, \Y$ and the cost $c$, $\B$ is coercive and lower semicontinuous over $\Pc(\Y)$.  As a result, there exists a signed barycenter $\bar{\nu}\in \Pc(\Y)$ that minimizes \eqref{eq:bary-func}.  
\end{theorem}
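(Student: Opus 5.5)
The plan is to prove coercivity first, and then lower semicontinuity, and finish with the direct method. Write $M(\nu):=\int_{\Y} c(x_0,y)\,\mathrm{d}\nu(y)$ and $m_i:=\int_{\X} c(x,y_0)\,\mathrm{d}\mu_i(x)<\infty$.

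\textbf{Two-sided bounds.} For any $\pi\in\Pi(\mu_i,\nu)$, integrating \eqref{eq:cost_epsilon_property} against $\pi$ gives
\[
(1-\varepsilon)M(\nu)-B_\varepsilon m_i\leqslant K_c(\mu_i,\nu)\leqslant (1+\varepsilon)M(\nu)+B_\varepsilon m_i .
\]
The lower bound holds for every admissible $\pi$, so it survives the infimum. The upper bound comes from the product coupling $\mu_i\otimes\nu$. Set $A_+=\sum_{i\in I_+}a_i$, so that $\sum_{i\in I_-}a_i=1-A_+$. Apply the lower bound to the positive terms and the upper bound to the negative terms:
\[
\B(\nu)\geqslant \Big[(1-\varepsilon)A_+ +(1+\varepsilon)(1-A_+)\Big]M(\nu)-C_\varepsilon
=\big[1-\varepsilon(2A_+-1)\big]M(\nu)-C_\varepsilon .
\]
Here $C_\varepsilon=B_\varepsilon\sum_i|a_i|m_i$. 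Choose $\varepsilon$ small so the bracket is at least $1/2$. Then $\B(\nu)\geqslant \tfrac12 M(\nu)-C$. This gives coercivity: sublevel sets of $\B$ have bounded $M$, hence they are tight by Assumption \ref{asp:cost_compact} and the remark following it. It also shows $\B$ is bounded below and finite on $\Pc(\Y)$.

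\textbf{Lower semicontinuity (the main obstacle).} Let $\nu_n\rightharpoonup\nu$ with $\liminf\B(\nu_n)<\infty$, and pass to a subsequence realizing the liminf. The positive terms are weakly lower semicontinuous, since $c\geqslant0$ is continuous (standard Kantorovich lsc). The difficulty is the negative terms: we need $\limsup K_c(\mu_i,\nu_n)\leqslant K_c(\mu_i,\nu)$. This does not hold for mere weak convergence, because mass escaping to infinity can inflate $K_c$.

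My first attempt is to check whether moments converge, i.e.\ whether $M(\nu_n)\to M(\nu)$. In that case the stability result \citep[Theorem 5.20]{Villani2009Old} quoted in \Cref{sub:space} gives $K_c(\mu_i,\nu_n)\to K_c(\mu_i,\nu)$, and the conclusion follows. Otherwise, I would exploit the cancellation in the leading term.

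\textbf{Handling the general case by splitting.} Decompose each $\nu_n$ (via a cutoff on $\{c(x_0,y)\leqslant b\}$) into a tight part and an escaping part carrying the mass at infinity. On the escaping part, the $\varepsilon$-estimates show every $K_c(\mu_i,\cdot)$ behaves like $M(\cdot)$ up to $\varepsilon M+B_\varepsilon(\text{small mass})$. Since $\sum a_i=1$, the net contribution of the escaping mass to $\B$ is approximately $+M(\text{escaping part})\geqslant 0$. So escaping mass can only increase $\B$ in the limit, which is exactly what lsc requires.

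Concretely, I would bound $K_c(\mu_i,\nu_n)$ for $i\in I_-$ from above by gluing an optimal plan for the truncated measure with a product coupling for the tail. I would bound the positive terms from below by restricting optimal plans. Then I let $n\to\infty$, $b\to\infty$, and $\varepsilon\to0$ in that order. This bookkeeping is where I expect most of the work.

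\textbf{Conclusion.} Take a minimizing sequence. By coercivity it is tight with $\sup M(\nu_n)<\infty$, so by Prokhorov some subsequence converges weakly to $\bar\nu$. Portmanteau gives $M(\bar\nu)\leqslant\liminf M(\nu_n)<\infty$, so $\bar\nu\in\Pc(\Y)$. Lower semicontinuity then shows that $\bar\nu$ is a minimizer.
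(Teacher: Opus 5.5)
Your proposal is correct. The coercivity step is the paper's: your bracket $1-\varepsilon(2A_+-1)$ is exactly the paper's $a_\varepsilon$, and bounding the negative terms via $\mu_i\otimes\nu$ rather than integrating the pointwise bound against an optimal plan makes no difference.

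For lower semicontinuity you take a genuinely different route. The paper fixes optimal plans $\pi_{i,n}\in\Pi(\mu_i,\nu_n)$ for every $i$, of both signs, and extracts weak limits $\pi_i$. It then invokes Villani's stability theorem (Theorem 5.20) to know that each $\pi_i$ is optimal for $(\mu_i,\nu)$, even when moments escape. It passes to the limit on compact rectangles $E\times F$ and uses the growth condition on the complements. There the escaping part contributes $a_\varepsilon\int_{\Y\setminus F}c(x_0,y)\,\mathrm{d}\nu_n\geqslant 0$, minus an error controlled by uniform integrability of $c(\cdot,y_0)$ under the fixed $\mu_i$. You use the same tail cancellation, but you control the non-escaping part of the negative terms with competitor couplings instead of limits of optimal plans.

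Here is one way to make your gluing precise.
\begin{itemize}
\item Let $F_b=\{c(x_0,\cdot)\leqslant b\}$, with $b$ chosen so that $\nu(\{c(x_0,\cdot)=b\})=0$. Write $\nu_n=(1-\delta_n)\hat{\nu}_n^b+\delta_n\check{\nu}_n^b$, where $\hat{\nu}_n^b$ is the normalized restriction of $\nu_n$ to $F_b$.
\item Use the coupling $(1-\delta_n)\pi^b_{i,n}+\delta_n\,\mu_i\otimes\check{\nu}_n^b\in\Pi(\mu_i,\nu_n)$, where $\pi^b_{i,n}$ is optimal for $(\mu_i,\hat{\nu}_n^b)$.
\item This gives $K_c(\mu_i,\nu_n)\leqslant(1-\delta_n)K_c(\mu_i,\hat{\nu}_n^b)+(1+\varepsilon)\int_{\Y\setminus F_b}c(x_0,y)\,\mathrm{d}\nu_n+\delta_nB_\varepsilon m_i$.
\item Your ``moments converge'' case closes the estimate. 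Apply it once to $\hat{\nu}_n^b\rightharpoonup\hat{\nu}^b$ on the compact set $F_b$, and again to $\hat{\nu}^b\rightharpoonup\nu$ as $b\to\infty$.
\end{itemize}

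For the positive terms, restrict $\pi_{i,n}$ to $\X\times F_b$ and pass to a limiting partial plan. Before comparing with $K_c(\mu_i,\nu)$, you must complete this partial plan with a product coupling between the leftover part of $\mu_i$ and $\nu$ restricted to $\Y\setminus F_b$.

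Also, the tail error is not small merely because the mass is small. It is $B_\varepsilon$ times the $c(\cdot,y_0)$-moment of a sub-measure of $\mu_i$ of mass at most $\sup_n M(\nu_n)/b$. This is small by uniform integrability of the single measure $\mu_i$, as in the paper's $R_\alpha$ argument. Keep $\varepsilon$ fixed while $b\to\infty$, as your ordering of limits already does.

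What each route buys: yours needs only continuity of $K_c$ in the uniformly integrable regime. It never uses optimality of weak limits of optimal plans, and it makes transparent why escaping mass can only raise $\B$. The paper's route is shorter and treats both signs symmetrically on compact sets, at the price of relying on the stability of optimality under mere weak convergence.
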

\begin{proof}
We begin by considering lower level sets $L_{\beta}:=\{\nu\in \Pc(\Y): \B(\nu) \leqslant \beta\}$ for some $\beta\in \R$. To show $\B$ is coercive over $\Pc(\Y)$, we plan to prove $L_{\beta}$ is weakly precompact in $\P(\Y)$ and any limit of convergent subsequences belongs to $\Pc(\Y)$. This is done by checking the integral condition $\sup_{\nu\in L_{\beta}}\int_{\Y} c(x_0,y)\mathrm{d}\nu<\infty$ (see \Cref{sub:space}).  

Choose $\varepsilon>0$ such that $a_{\varepsilon}:= \sum_{i\in I_+} (1-\varepsilon)a_i+\sum_{i\in I_-} (1+\varepsilon)a_i>0$. Given some $\nu\in L_{\beta}$, for each $i\in \{1,\ldots, m\}$ let $\pi_i\in \Pi(\mu_i,\nu)$ be an OT plan. By Assumption \ref{asp:cost_growth} on the cost, it follows from \eqref{eq:cost_epsilon_property} that there exists a constant $B_{\epsilon}$ such that
    \begin{align}
    \B(\nu)&=\sum_{i\in I_+} a_i \int_{\X\times \Y} c(x,y)\,\mathrm{d}\pi_{i}(x,y) + \sum_{i\in I_-} a_i \int_{\X\times \Y} c(x,y)\,\mathrm{d}\pi_{i}(x,y)\notag\\
     &\geqslant\sum_{i\in I_+}  a_i \int_{\X\times\Y} \big((1-\varepsilon)c(x_0,y)-B_{\varepsilon}c(x,y_0)\big)\, \mathrm{d} \pi_{i}(x,y)\notag\\
     &\hspace{3em}+\sum_{i\in I_-}  a_i \int_{\X\times\Y} \big((1+\varepsilon)c(x_0,y)+B_{\varepsilon}c(x,y_0)\big)\, \mathrm{d}\pi_{i}(x,y)\notag\\
     &\geqslant a_{\varepsilon}\int_{\Y} c(x_0,y)\,\mathrm{d}\nu(y)-B_{\varepsilon}\sum_{i=1}^m|a_i|\int_{\X}c(x,y_0)\, \mathrm{d}\mu_i(x).\label{eq:low-bnd-bary-v}
    \end{align}
This implies that there exists some constant $C_{\varepsilon}>0$ such that
    \[
    \int_{\Y} c(x_0,y)\,\mathrm{d}\nu(y)\leqslant  \frac{1}{a_{\varepsilon}}\left[ \B(\nu)+B_{\varepsilon} \sum_{i=1}^m|a_i|\int_{\X}c(x,y_0)\,\mathrm{d}\mu_i(x)\right]\leqslant C_{\varepsilon}(\beta+1),
    \]
and for each OT plan $\pi_i$, 
     \[
     \int_{\Y} (c(x,y_0)+c(x_0,y)\big)\, \mathrm{d}\pi_{i}(x,y)\leqslant  C_{\varepsilon}(\beta+1).
    \]
Thus, we see that $\B$ is coercive over $\Pc(\Y)$ and in particular, the lower level sets $L_{\beta}$ are weakly precompact in $\P(\Y)$.

Now we want to show that $\B$ is weakly lower semicontinuous over $L_{\beta}$.  Given a sequence  $\{\nu_n\} \subseteq L_{\beta}$, let $\pi_{i,n}\in \Pi(\mu_i, \nu_n)$ be an OT plan between $\mu_i$ and $\nu_n$. By the stability result \citep[Theorem 5.20]{Villani2009Old} we may assume that $\nu_n$ and each $\pi_{i,n}$ converge weakly (in duality with continuous bounded functions) to limits $\nu\in \P(\Y)$ and $\pi_{i}\in \P(\X\times\Y)$ respectively. Moreover, $\nu\in \Pc(\Y)$ from our discussion in \Cref{sub:space} and $\pi_i\in\Pi (\mu_i,\nu)$ is an OT plan.

Choose compact sets $E\subset \X$ and $F\subset \Y$ and split
    \begin{multline*}
    \liminf_{n\to\infty} \B(\nu_n)= \sum_{i=1}^m a_i \int_{E\times F} c(x,y)\, \mathrm{d}\pi_{i}(x,y)\\
    +\liminf_{n\to\infty} \sum_{i=1}^m a_i\Big( \int_{(\X\setminus E) \times F} c(x,y)\, \mathrm{d}\pi_{i,n}(x,y)+\int_{\X \times (\Y\setminus F)} c(x,y)\, \mathrm{d}\pi_{i,n}(x,y)\Big).
    \end{multline*}
Using \eqref{eq:cost_epsilon_property_2} from Assumption \ref{asp:cost_growth} of the cost, the first term in the second line satisfies 
    \[
    \begin{aligned}
    \sum_{i=1}^m a_i \int_{(\X\setminus E)\times F} c(x,y)\,\mathrm{d}\pi_{i,n}(x,y)&\geqslant -\sum_{i=2}^m \abs{a_i}\int_{(\X\setminus E)\times F} (B_1 c(x,y_0)+2c(x_0,y))\, \mathrm{d}\pi_{i,n}(x,y)\\
    &\geqslant -\sum_{i=2}^m \abs{a_i} \int_{\X\setminus E} (B_1 c(x,y_0)+\sup_{y\in F}2c(x_0,y)) \,\mathrm{d}\mu_i(x).
    \end{aligned}
    \] 
We estimate the second term in the second line  as in  \eqref{eq:low-bnd-bary-v}, obtaining
    \[
    \begin{aligned}
    &\sum_{i=1}^m a_i \int_{\X\times (\Y\setminus F)} c(x,y) \,\mathrm{d}\pi_{i,n}(x,y)\\
    \geqslant &a_{\varepsilon} \int_{\Y\setminus F} c(x_0,y)\,\mathrm{d}\nu_i(y)- B_{\varepsilon} \sum_{i=1}^m \abs{a_i} \int_{\X\times (\Y\setminus F)} c(x,y_0)\,\mathrm{d}\pi_{i,n}(x,y)\\
    \geqslant &-B_{\varepsilon}\sum_{i=1}^m \abs{a_i} \int_{\X\times (\Y\setminus F)} c(x,y_0)\,\mathrm{d}\pi_{i,n}(x,y).
    \end{aligned}
    \]
Letting $E$ approach $\X$, it follows that
\[
\liminf_{n\to\infty} \B(\nu_n)\geqslant \sum_{i=1}^m a_i \int_{\X\times F} c(x,y)\mathrm{d}\pi_{i}(x,y) -\limsup_{n\to\infty} B_{\varepsilon}\sum_{i=1}^m|a_i|\int_{\X\times(\Y\setminus F)}c(x,y_0)\mathrm{d}\pi_{i,n}(x,y).
\]

Note that $\int_{\X\times(\Y\setminus F)}c(x,y_0)\, \mathrm{d}\pi_{i,n}(x,y)=\int_{\X}c(x,y_0)\,\mathrm{d}\tilde{\mu}^F_{i,n}(x)$
where $\tilde{\mu}_{i,n}^F\in \Pc(\X)$ is a measure such that $\tilde{\mu}_{i,n}^F(\X)=\nu_n(\Y\setminus F)$ and $\tilde{\mu}_{i,n}^F\ll \mu_i$ (in fact for any Borel set $Q\subset \X$ we have $\tilde{\mu}_{i,n}^F(Q)\leqslant \mu_{i}(Q)$).  If we define $R_{\alpha}:=\{x\in \X: c(x,y_0)>\alpha\}$, then it follows that 
\[
\limsup_{n\to\infty} B_{\varepsilon}\sum_{i=1}^m|a_i|\int_{\X\times(\Y\setminus F)}c(x,y_0)\mathrm{d}\pi_{i,n}(x,y)\leqslant B_{\varepsilon}\sum_{i=1}^m|a_i|\Big(\alpha \nu(\Y\setminus F)+\int_{R_{\alpha}} c(x,y_0)d\mu_i(x)\Big).
\]
Allowing $F$ to approach $\Y$ and then sending $\alpha\to\infty$, it follows that 
\[
\limsup_{n\to\infty} B_{\varepsilon}\sum_{i=1}^m|a_i|\int_{\X\times(\Y\setminus F)}c(x,y_0)\mathrm{d}\pi_{i,n}(x,y)\leqslant 0,
\]
and so it follows that 
\[
\liminf_{n\to\infty} \B(\nu_n)\geqslant \sum_{i=1}^m a_i \int_{\X\times \Y} c(x,y)\, \mathrm{d}\pi_{i}(x,y)=\B(\nu),
\]
This completes the proof of lower semicontinuity.  The existence of minimizers now follows from the direct method of the calculus of variations.
\end{proof}

\subsection{Properties of the barycenter functional}

Now that we have demonstrated the existence of minimizers, we turn to the task of characterizing the minimizers by studying various notions of differentibility and  critical points.  As is standard in optimal transport, there are at least two distinct and useful notions of paths between measures.  The simplest paths are those given by convex interpolation, namely if one has measures $\nu_0, \nu_1\in \Pc(\Y)$, we can consider the interpolation $\nu_t:=t\nu_1+(1-t)\nu_0$.  On the other hand, we can also consider paths along generalized geodesics with respect to a base point in $\Pc(\Y)$.  Namely, if one has an abstract probability space $(\Omega, \mathcal{M}, \rho)$, with sufficiently rich structure, we can interpolate between $\nu_0$ and $\nu_1$ by finding a curve $Y:[0,1]\times\Omega\to\Y$ 
such that $t\mapsto Y(t,z)$ is $C^1$ and $\nu_t:=\push{Y_t}\rho$ matches the endpoints at $t=0,1$.   Since there is no natural choice of a distinguished basepoint for the general form of our problem, when considering generalized geodesics we will work with the abstract space $(\Omega, \mathcal{M}, \rho)$. However, we will see that when at least one of the measures $\mu_i$ is absolutely continuous, we can choose it instead as the basepoint and obtain a sharper characterization of differentiability properties along generalized geodesics.

\begin{proposition}\label{prop:gg_derivative}
Let $\nu\in \Pc^{\textup{AC}}(\Y)$ and $w:\Y\to\R^d$ be a smooth compactly supported vector field.  Furthermore, suppose that $S:[0,1]\times\Y\to\Y$ satisfies the Lagrangian flow 
    \[
    S(t,y)=y+\int_0^t w(S(\tau,y))\,\mathrm{d}\tau
    \]
for all $t\in [0,1]$ and $y\in \Y$. If we set $\nu_t=\push{S(t,\cdot)}\nu$, then
    \[
    \lim_{t\to 0^+}\frac{\B(\nu_t)-\B(\nu)}{t}=\int_{\Y} w(y)\cdot \sum_{i=1}^m a_i \nabla_y c(X_i(y), y)\, \mathrm{d}\nu(y),
    \]
where $X_i$ is the optimal transport map from $\nu$ to $\mu_i$.
\end{proposition}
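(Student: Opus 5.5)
The plan is to handle each term $K_c(\mu_i,\nu_t)$ separately and add them up with the weights $a_i$. The signs matter: for $a_i>0$ we need an upper bound on the difference quotient, and for $a_i<0$ a lower bound. So we need a two-sided derivative for each term, and we cannot rely on convexity. Since $\nu\in\Pc^{\textup{AC}}(\Y)$, \Cref{thm:Brenier} (with the roles of the two spaces swapped, which the bi-twist condition allows) gives a unique optimal map $X_i$ from $\nu$ to $\mu_i$, together with a $c$-concave potential $f_i$ on $\Y$ satisfying $f_i(y)+f_i^c(X_i(y))=c(X_i(y),y)$ for $\nu$-a.e.\ $y$. The claim then reduces to showing, for each $i$,
\[
\lim_{t\to0^+}\frac{K_c(\mu_i,\nu_t)-K_c(\mu_i,\nu)}{t}=\int_\Y w(y)\cdot\nabla_y c(X_i(y),y)\,\mathrm{d}\nu(y).
\]

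\emph{Upper bound.} Use the admissible plan $\push{(X_i,S_t)}{\nu}\in\Pi(\mu_i,\nu_t)$. This gives
\[
K_c(\mu_i,\nu_t)-K_c(\mu_i,\nu)\leqslant\int_\Y \big[c(X_i(y),S_t(y))-c(X_i(y),y)\big]\,\mathrm{d}\nu .
\]
Since $w$ is compactly supported, $S_t(y)=y$ outside $\operatorname{supp}w$. On the support we have $|S_t(y)-y|\leqslant t\|w\|_\infty$, $S_t(y)=y+tw(y)+O(t^2)$, and $c\in C^2$. The difference quotient therefore converges pointwise to $w(y)\cdot\nabla_y c(X_i(y),y)$. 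To apply dominated convergence we need a bound on $\sup_{|z|\leqslant C}|\nabla_y c(X_i(y),z)|$ over the compact $t$-neighborhood $F$ of $\operatorname{supp} w$, uniformly in $x=X_i(y)$. The points $x=X_i(y)$ are not confined to a compact set, so this bound is not automatic. I would obtain it from the $c$-concavity of $f_i$: on compact $y$-sets $f_i$ is locally Lipschitz, and $\nabla f_i(y)=\nabla_y c(X_i(y),y)$ $\nu$-a.e., so $\nabla_y c(X_i(y),y)$ is bounded for $y\in F$. Upgrading this to nearby $z$ requires local uniform control on $\nabla_y^2 c$, which I would take from the $C^2$ assumption combined with the coercivity in Assumption~\ref{asp:cost_compact}.

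\emph{Lower bound.} This is the obstacle. Use duality: $K_c(\mu_i,\nu_t)\geqslant\int f_i^c\,\mathrm{d}\mu_i+\int f_i\,\mathrm{d}\nu_t$, so
\[
K_c(\mu_i,\nu_t)-K_c(\mu_i,\nu)\geqslant\int_\Y\big[f_i(S_t(y))-f_i(y)\big]\,\mathrm{d}\nu(y).
\]
Now $f_i$ is locally Lipschitz, hence differentiable a.e. by Rademacher, and $\nu\ll$ Lebesgue. The flow $S_t$ is a diffeomorphism with bounded Jacobian and $S_t(y)=y+tw(y)+o(t)$. The difference quotient is dominated by $\mathrm{Lip}(f_i|_F)\|w\|_\infty$, and it converges to $\nabla f_i(y)\cdot w(y)$ at every differentiability point of $f_i$. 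By dominated convergence the limit inferior is at least $\int w\cdot\nabla f_i\,\mathrm{d}\nu=\int w\cdot\nabla_y c(X_i(y),y)\,\mathrm{d}\nu$. The identification $\nabla f_i(y)=\nabla_y c(X_i(y),y)$ holds because $y$ minimizes $c(X_i(y),\cdot)-f_i$, and this is exactly the first-order condition of \Cref{lem:Gangbo} with the roles of $x$ and $y$ swapped.

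Combining the two bounds gives matching limsup and liminf, so the limit exists for each $i$. Multiplying by $a_i$ (whose sign no longer matters, since the limits are two-sided) and summing over $i$ gives the formula. The delicate points are the integrability of $f_i$ against $\nu_t$, which holds because $\nu_t$ agrees with $\nu$ outside a compact set, and the uniform domination in the upper bound, as noted above.
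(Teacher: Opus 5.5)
Your argument is the paper's proof. The paper uses the same sandwich $\int_\Y f_i\circ S_t\,\mathrm{d}\nu+\int_\X f_i^c\,\mathrm{d}\mu_i\leqslant K_c(\mu_i,\nu_t)\leqslant\int_\Y c(X_i(y),S_t(y))\,\mathrm{d}\nu(y)$, grouped by the sign of $a_i$ (primal bound for $i\in I_+$, dual bound for $i\in I_-$, reversed for the opposite inequality) rather than as two-sided limits term by term. It also identifies $\nabla f_i(y)=\nabla_y c(X_i(y),y)$ for $\nu$-a.e.\ $y$ from absolute continuity and the twist condition, exactly as you do.

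The domination of the primal quotient $\frac{c(X_i(y),S_t(y))-c(X_i(y),y)}{t}$ that you flag is only asserted in the paper (``by dominated convergence''). Your proposed fix via Assumption~\ref{asp:cost_compact} does not deliver it. A bound on $\nabla_y c(X_i(y),y)=\nabla f_i(y)$ for $y\in F\supseteq\operatorname{supp}w$ need not confine $X_i(y)$ to a compact set, because twist gives injectivity of $x\mapsto\nabla_y c(x,y)$, not properness. What actually closes the step is relative compactness of $X_i(F)$, which holds for instance when $x\mapsto\nabla_y c(x,y)$ is proper locally uniformly in $y$, as for the quadratic cost. Once $X_i(F)$ is relatively compact, continuity of $\nabla_y c$ on a compact set gives the bound.
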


We cannot guarantee that the signed barycenter is absolutely continuous, even when the input measures themselves are absolutely continuous. This distinguishes from the classical barycenter problem. Remark 3.5 in \cite{tornabene2024generalizedwassersteinbarycenters} constructs an example that the signed barycenter of two given Gaussians w.r.t the Wassestein distance on $\R$ is the Dirac measure, and the result can be generalized to multiple marginals and general cost.

As a result, we will also need to understand derivatives of $\B$ at general points in $\Pc(\Y)$. While we cannot guarantee differentiability at these points, \Cref{prop:gg_derivative_bound} gives us some useful upper and lower slope bounds. The absence of absolute continuity requests us to work with couplings and their disintegrations, instead of maps in \Cref{prop:gg_derivative}. 

\begin{definition}
    Given a measure $\nu\in \Pc(\Y)$, we define the space $\Pi_*(\mu_1,\ldots, \mu_m;\nu)\subseteq \Pi(\mu_1,\ldots, \mu_m,\nu)$ as the set of couplings such that for any $\gamma\in \Pi_*(\mu_1,\ldots, \mu_m;\nu)$ and any $i\in \{1,\ldots, m\}$ we have 
    \[
    K_c(\mu_i,\nu)=\int_{\X^m\times \Y} c(x_i,y)\, \mathrm{d}\gamma(x_1,\ldots, x_m,y).
    \]
\end{definition}
The space $\Pi_*(\mu_1,\cdots,\mu_m; \nu)$ is not empty, due to the generalized gluing lemma or iterated Dudley's lemma \citep[Proposition 8.6]{Ambrosio2021Lectures}.

\begin{proposition}\label{prop:gg_derivative_bound}
Given $\nu\in \Pc(\Y)$ and $\rho\in \Pc^{\textup{AC}}(\Omega)$ for some compact set $\Omega$, let $Y:[0,1]\times\Omega\to \Y$ be a curve $Y_t(z):=Y(t,z)$  satisfying $\push{Y_0}{\rho}=\nu$.  If $t\mapsto Y(t,z)$ is  uniformly $C^1$ for all $z\in \Omega$, then for $\nu_t:=\push{Y_t}{\rho}$, there exist vector fields $V_+, V_-:\Omega\to \R^d$ (potentially depending on the curve $Y_t$) such that
\[
\int_{\Omega}\partial_t Y(0,z)\cdot V_-(z)\, \mathrm{d}\rho(z)\leqslant \lim_{t\to 0^+}\frac{\B(\nu_t)-\B(\nu)}{t}\leqslant \int_{\Omega}\partial_t Y(0,z)\cdot V_+(z)\, \mathrm{d}\rho(z).
\]
Furthermore,
for each $i\in \{1,\ldots, m\}$, there exists a map $\tilde{X}_i:\Omega\to\X$, a coupling $\pi_i\in \Pi(\mu_i,\rho)$, and disintegrations $\pi_{i,z}^+, \pi_{i,z}^-$ of $\pi_i$ with respect to $\rho$ such that
\[
K_c(\mu_i,\nu)=\int_{\Omega} c(\tilde{X}_i(z), Y_0(z))\, \mathrm{d}\rho(z)=\int_{\Omega} \int_{\X} c(x_i,Y_0(z))\,\mathrm{d}\pi_{i,z}^{\pm}(x_i)\,\mathrm{d}\rho(z).
\]
and vector fields $V_+, V_-$ are given by
\begin{align*}
    V_+(z)&= \sum_{i\in I_+}a_i\nabla_y c(\tilde{X}_i(z), Y_0(z))+\sum_{i\in I_-}a_i\int_{\X} \nabla_y c(x_i,Y_0(z))\,\mathrm{d}\pi_{i,z}^+(x_i),\\
    V_-(z)&=\sum_{i\in I_+}a_i\int_{\X} \nabla_y c(x_i,Y_0(z))\,\mathrm{d}\pi_{i,z}^-(x_i)+\sum_{i\in I_-}a_i\nabla_y c(\tilde{X}_i(z), Y_0(z)).
\end{align*}

\end{proposition}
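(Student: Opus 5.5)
Our approach is to sandwich the difference quotient between two one-sided bounds. For the positive-weight terms we use a fixed optimal coupling at time $0$ as a competitor at time $t$. For the negative-weight terms we use optimal couplings at time $t$ as competitors at time $0$.

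\textbf{Lifting couplings to $\Omega$.} Since $\rho$ is absolutely continuous on compact $\Omega$ and $\push{Y_0}{\rho}=\nu$, we first transfer optimal plans to $\Omega$. Fix an optimal $\gamma_i\in\Pi(\mu_i,\nu)$ and disintegrate it as $\gamma_i=\int \gamma_{i,y}\,\mathrm{d}\nu(y)$. Because $\rho$ is atomless, we can realize the conditional law $\gamma_{i,Y_0(z)}$ by a measurable map. Concretely, we split the fiber $Y_0^{-1}(y)$ using an auxiliary uniform variable, which is available from the non-atomic structure of $\rho$ via a measure isomorphism. This produces a map $\tilde X_i:\Omega\to\X$ with $\push{(\tilde X_i,Y_0)}{\rho}=\gamma_i$, so $K_c(\mu_i,\nu)=\int c(\tilde X_i,Y_0)\,\mathrm{d}\rho$. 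We expect this step to be the main technical obstacle. A cleaner route, if needed, is to choose the disintegration-type coupling $\pi_i=\push{(\tilde X_i,\id)}{\rho}$ directly via the Borel isomorphism theorem.

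\textbf{Upper bound.} For $i\in I_+$, the plan $\push{(\tilde X_i,Y_t)}{\rho}\in\Pi(\mu_i,\nu_t)$ gives
\[
K_c(\mu_i,\nu_t)-K_c(\mu_i,\nu)\leqslant \int_\Omega \big[c(\tilde X_i,Y_t)-c(\tilde X_i,Y_0)\big]\,\mathrm{d}\rho .
\]
For $i\in I_-$, let $\pi_i^t\in\Pi(\mu_i,\rho)$ be optimal for the lifted problem of transporting $\mu_i$ to $\rho$ with cost $c(x,Y_t(z))$; by the same lifting, its cost equals $K_c(\mu_i,\nu_t)$. Using $\pi_i^t$ as a competitor at time $0$,
\[
K_c(\mu_i,\nu)-K_c(\mu_i,\nu_t)\leqslant\int \big[c(x,Y_0(z))-c(x,Y_t(z))\big]\,\mathrm{d}\pi^t_i .
\]
Multiplying by $a_i<0$ turns both inequalities into upper bounds on $\B(\nu_t)-\B(\nu)$. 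We then divide by $t$ and apply the mean value theorem: $c\in C^2$, and $\partial_tY$ is uniformly continuous and bounded on compact $\Omega$. Together with the moment bounds from the proof of \Cref{thm:existence} (uniform integrability of $\nabla_y c$ on the relevant sets, using that $Y_t(\Omega)$ is compact), this makes the difference quotients converge.

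Along a subsequence, $\pi_i^t\rightharpoonup\pi_i^+$ by tightness, since the marginals $\mu_i$ and $\rho$ are fixed. The stability theorem \citep[Theorem 5.20]{Villani2009Old} makes $\pi_i^+$ optimal for the cost $c(x,Y_0(z))$. We disintegrate it with respect to $\rho$ as $\pi^+_{i,z}$, which yields the stated identity $K_c(\mu_i,\nu)=\int\!\int c(x_i,Y_0)\,\mathrm{d}\pi^{+}_{i,z}\,\mathrm{d}\rho$. Passing to the limit in $\int \nabla_y c(x,Y_{s}(z))\cdot\partial_tY(s,z)\,\mathrm{d}\pi^t_i$ needs continuity and local boundedness of $\nabla_y c$ with $Y$ ranging in a compact set, plus a truncation in $x$ justified by the growth Assumption \ref{asp:cost_growth}. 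This gives the $\limsup$ bound with $V_+$. If the subsequence issue arises, we define $\pi^+$ along a sequence realizing the $\limsup$.

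\textbf{Lower bound.} The argument is symmetric with the roles of the signs swapped. For $I_-$ we use the fixed maps $\tilde X_i$ at time $t$. For $I_+$ we use the time-$t$ optimal lifted plans as competitors at time $0$, extracting limits $\pi_i^-$ along a sequence realizing the $\liminf$. This gives the $V_-$ bound. Finally, we read the stated $\lim$ as $\liminf$ and $\limsup$ respectively; the inequality chain shows both are finite and sandwiched as claimed.
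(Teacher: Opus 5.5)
Your sandwich is the paper's proof. You use time-$0$ optimal data as competitors at time $t$ for $I_+$ in the upper bound and for $I_-$ in the lower bound, and time-$t$ optimal plans as competitors at time $0$ for the opposite sign. You then extract weak limits in $\Pi(\mu_i,\rho)$ along sequences realizing the $\limsup$/$\liminf$, and disintegrate against $\rho$. Two of your choices are cleaner than the paper's:
\begin{itemize}
\item You use lifted \emph{couplings} at time $t$ instead of the paper's maps $\tilde X_{i,t}$. The identity $\min_{\pi\in\Pi(\mu_i,\rho)}\int c(x,Y_t(z))\,\mathrm{d}\pi=K_c(\mu_i,\nu_t)$ follows by gluing an optimal $\gamma\in\Pi(\mu_i,\nu_t)$ with $(Y_t,\id)_{\#}\rho\in\Pi(\nu_t,\rho)$, and needs no atomlessness at all.
\item You drop the multi-marginal $\Pi_*$ plan, which is unnecessary since the terms decouple.
\end{itemize}

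\textbf{The time-$0$ map $\tilde X_i$.} This step fails as justified. To realize $\gamma_{i,Y_0(z)}$ by a map you need a uniform variable on $(\Omega,\rho)$ independent of $Y_0$. That means the conditional measures of $\rho$ on the fibres $Y_0^{-1}(y)$ must be atomless for $\nu$-a.e.\ $y$, and atomlessness of $\rho$ does not give this. If $Y_0$ is injective $\rho$-a.e., every $\tilde X_i$ factors as $S\circ Y_0$, so $(\tilde X_i,Y_0)_{\#}\rho=(S,\id)_{\#}\nu$ is induced by a map from $\nu$.

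Here is a concrete counterexample:
\begin{itemize}
\item $c=\frac12|x-y|^2$ on $\R^2$, $\Omega=[0,1]^2$ with $\rho$ Lebesgue measure;
\item $Y_0(z)=(\phi(z),0)$, where $\phi$ is Borel, injective on a full-measure set, and pushes $\rho$ to Lebesgue measure on $[0,1]$;
\item $Y_t=Y_0+tw$ for a fixed $w$, and $\mu_i$ uniform on $[0,1]^2$.
\end{itemize}
The optimal plans between $\nu$ (uniform on a segment) and $\mu_i$ are exactly those with $x_1=y_1$. None of them is induced by a map, since $\mu_i$ would then charge a graph. So no admissible $\tilde X_i$ exists.

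The paper asserts the maps $\tilde X_{i,t}$ without proof, so this is a defect of the statement itself, not only of your write-up. Two repairs are available:
\begin{itemize}
\item assume atomless fibres, or $\nu\in\Pc^{\textup{AC}}(\Y)$, in which case the bi-twist gives a map $S_i$ and $\tilde X_i:=S_i\circ Y_0$; or
\item replace $\tilde X_i$ by an optimal lifted coupling $\pi^0_i$, and $\nabla_y c(\tilde X_i(z),Y_0(z))$ by $\int\nabla_y c(x,Y_0(z))\,\mathrm{d}\pi^0_{i,z}(x)$. Your argument then proves this verbatim.
\end{itemize}

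\textbf{The limit passage.} This is also under-justified, again mirroring the paper, which passes to the limit without comment.
\begin{itemize}
\item $Y_t(\Omega)$ is not compact in general. $Y$ is only measurable in $z$, and $Y_0(\Omega)$ is unbounded whenever $\nu$ has unbounded support.
\item The bounds from \Cref{thm:existence} control $c$, not $\nabla_y c$. To make the difference quotients uniformly integrable you need an additional growth hypothesis on $\nabla_y c$ (satisfied, e.g., by $|x-y|^p$), together with boundedness of $\partial_t Y$.
\item Villani's stability theorem (Theorem 5.20) is stated for continuous costs, whereas $c(x,Y_t(z))$ is only Carath\'eodory. This is harmless: all your couplings share the second marginal $\rho$, so weak convergence in $\Pi(\mu_i,\rho)$ passes to the limit in such integrands by a Lusin-type argument. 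That gives both the optimality of $\pi^\pm_i$ and the convergence of the quotients.
\end{itemize}
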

\begin{remark}
    Note that the potential dependence of $V_+, V_-$ on the curve $Y_t$ is  through the disintegrations $\pi_{i,z}^{\pm}$, whereas the maps $\tilde{X}_i$ can be chosen independently of the curve.
\end{remark}
\begin{proof}
For each $t\in [0,1]$ there exists a plan  $\gamma_t\in \Pi_*(\mu_1,\ldots, \mu_m;\nu_t)$ and measurable maps $\tilde{X}_{1,t},\ldots, \tilde{X}_{m,t}:\Omega\to\X$ such that $\push{(\tilde{X}_{1,t},\ldots, \tilde{X}_{m,t},Y_t)}{\rho}=\gamma_t$.  Since for any $t, s\in [0,1]$ we have $\push{(X_{1,s},\ldots,X_{m,s}, Y_t)}{\rho}\in \Pi(\mu_1,\ldots, \mu_m,\nu_t)$, it follows that  
\[
K_c(\mu_i, \nu_t)= \int_{\Omega} c(\tilde{X}_{i,t}(z), Y_t(z))\,\mathrm{d}\rho(z)\leqslant \int_{\Omega} c(\tilde{X}_{i,s}(z), Y_t(z))\,\mathrm{d}\rho(z).
\]
For $i\in I_+$, we have the upper and lower bounds for $ a_i (K_c(\mu_i,\nu_t)-K_c(\mu_i,\nu))$
    \begin{multline*}
    a_i \int_{\Omega} c(\tilde{X}_{i,t}(z),Y_t(z))\,\mathrm{d}\rho(z)- a_i \int_{\Omega} c(\tilde{X}_{i,t}(z),Y_0(z))\,\mathrm{d}\rho(z)\leqslant a_i K_c(\mu_i,\nu_t)- a_i K_c(\mu_i,\nu)\\
    \leqslant a_i\int_{\Omega } c(\tilde{X}_{i,0}(z),Y_t(z))\,\mathrm{d}\rho(z)- a_i \int_{\Omega}c(\tilde{X}_{i,0}(z),Y_0(z))\, \mathrm{d}\rho(z).
    \end{multline*}
Similarly, we also have the upper and lower bounds when $i\in I_{-}$
    \begin{multline*}
        a_i \int_{\Omega} c(\tilde{X}_{i,0}(z),Y_t(z))\,\mathrm{d}\rho(z)-a_i \int_{\Omega} c(\tilde{X}_{i,0}(z),Y_0(z))\,\mathrm{d}\rho(z)\leqslant a_i K_c(\mu_i,\nu_t) - a_i K_c(\mu_i, \nu)\\
        \leqslant a_i\int_{\Omega} c(\tilde{X}_{i,t}(z),Y_t(z))\,\mathrm{d}\rho(z)-a_i \int_{\Omega}c(\tilde{X}_{i,t}(z)-Y_0(z))\,\mathrm{d}\rho(z).
    \end{multline*}
This implies that the difference quotient $\frac{\B(\nu_t)-\B(\nu)}{t}$ is bounded from above by
\[
\int_{\Omega}\Big( \sum_{i\in I_+} a_i\frac{c(\tilde{X}_{i,0}, Y_t)-c(\tilde{X}_{i,0},Y_0)}{t}+\sum_{i\in I_-} a_i \frac{c(\tilde{X}_{i,t}, Y_t)-c(\tilde{X}_{i,t},Y_0)}{t}\Big)\,\mathrm{d}\rho,
\]
and bounded from below by
\[
\int_{\Omega}\Big( \sum_{i\in I_+} a_i\frac{c(\tilde{X}_{i,t}, Y_t)-c(\tilde{X}_{i,t},Y_0)}{t}+\sum_{i\in I_-} a_i \frac{c(\tilde{X}_{i,0}, Y_t)-c(\tilde{X}_{i,0},Y_0)}{t}\Big)\,\mathrm{d}\rho.
\]

Let $\tilde{\pi}_{i,t}=\push{(\tilde{X}_{i,t},\id)}\rho\in \Pi(\mu_i, \rho)$ and note that there must exist subsequences $t_n^+, t_n^-\to 0$ and measures $\tilde{\pi}_i^+, \tilde{\pi}_i^-\in \Pi(\mu_i,\rho)$
such that 
    \[
    \begin{aligned}
    \lim_{n\to\infty}\frac{\B(\nu_{t_n^+})-\B(\nu)}{t_n^+}&=\limsup_{t\to 0^+} \frac{\B(\nu_t)-\B(\nu)}{t},\\
    \lim_{n\to\infty}\frac{\B(\nu_{t_n^-})-\B(\nu)}{t_n^-}&=\liminf_{t\to 0^+} \frac{\B(\nu_t)-\B(\nu)}{t},
    \end{aligned}
    \]
and $\tilde{\pi}_{i,t_n^+},\tilde{\pi}_{i,t_n^-}$ converge weakly to $\tilde{\pi}_i^+, \tilde{\pi}_i^-$ respectively.  We then disintegrate $\mathrm{d}\tilde{\pi}_i^{\pm}(x_i,z)=\mathrm{d}\tilde{\pi}^{\pm}_{i,z}(x_i) \mathrm{d}\rho(z)$.   It now follows that $\lim_{t\to 0^+}\frac{\B(\nu_t)-\B(\nu)}{t}$ is bounded from above by
\[
\int_{\Omega}\partial_t Y_0(z)\cdot\left[\sum_{i\in I_+} a_i \nabla_y c(\tilde{X}_{i,0}(z), Y_0(z)) + \sum_{i\in I_-} a_i \int_{\X} \nabla_y c(x_i,Y_0(z))\,\mathrm{d}\pi^+_{i,z} (x_i)\right] \,\mathrm{d}\rho(z),
\] 
and bounded from below by
\[
\int_{\Omega}\partial_t Y_0(z)\cdot\left[\sum_{i\in I_+} a_i \int_{\X} \nabla_y c(x_i,Y_0(z))\,\mathrm{d}\pi^-_{i,z}(x_i)+\sum_{i\in I_-} a_i \nabla_y c(\tilde{X}_{i,0}(z), Y_0(z))\right]\,\mathrm{d}\rho(z), 
\]
which is the desired result.
\end{proof}

\begin{corollary}
Suppose $\mu_j\in \Pc^{\textup{AC}}(\X)$ for some $j\in I_+$, consider the setup in \Cref{prop:gg_derivative_bound} and take $(\Omega,\rho)=(\X,\mu_j)$. If there exists a family of $c$-concave functions $\phi_t:\X\to \R$ for all $t\in [0,1]$, such that for $\mu_j$-almost every $x$, the curve $Y_t:\X\to \Y$ defined in \Cref{prop:gg_derivative_bound} satisfies the equation  
    \[
    \nabla_x c(x,Y_t(x))=\nabla \phi_t(x),
    \]
then
    \[
    \int_{\X}\partial_t Y(0,x)\cdot V_-(x)\, \mathrm{d}\mu_j(x)\leqslant \lim_{t\to 0^+}\frac{\B(\nu_t)-\B(\nu)}{t}
    \]
where $V_-$ satisfies the following refined formula
    \begin{equation*}
          V_-(x)=a_j\nabla_y c(x, Y_0(x)) + \sum_{i\in I_-}a_i\nabla_y c(\tilde{X}_i(x), Y_0(x)) + \sum_{i\in I_+\setminus\{j\}}a_i\int_{\X} \nabla_y c(x_i,Y_0(x))\,\mathrm{d}\pi_{i,x}^-(x_i).
    \end{equation*}
An analogous refinement for $V_+$ holds if we have $j\in I_-$ instead of $I_+$. 
\end{corollary}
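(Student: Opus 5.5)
The plan is to rerun the proof of \Cref{prop:gg_derivative_bound} with $(\Omega,\rho)=(\X,\mu_j)$, and to show that for every $t$ the $j$-th optimal coupling can be taken to be $\push{(\id,Y_t)}{\mu_j}$. In the notation of that proof this means choosing $\tilde{X}_{j,t}=\id$ for all $t\in[0,1]$. With this choice the disintegration $\pi^-_{j,x}$ produced by the compactness step is just $\delta_x$. The $j$-th term of $V_-$ then collapses from an integral against an unknown disintegration to the pointwise expression $a_j\nabla_y c(x,Y_0(x))$.

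\textbf{Step 1: the $j$-th coupling is optimal for every $t$.} Since $\phi_t$ is $c$-concave, write $\phi_t=\psi_t^c$ for some $\psi_t$. By \Cref{lem:c-tran-prop}, parts 5) and 6), both functions are real valued and, for each $x$, the infimum $\inf_y c(x,y)-\psi_t(y)$ is attained at some $y_x$. The function $\phi_t$ has the modulus of continuity of $c$, so it is locally Lipschitz and differentiable $\mu_j$-a.e. because $\mu_j\in\Pc^{\textup{AC}}(\X)$.

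At a point of differentiability, the map $x'\mapsto c(x',y_x)-\phi_t(x')$ is minimized at $x'=x$. This gives $\nabla_x c(x,y_x)=\nabla\phi_t(x)=\nabla_x c(x,Y_t(x))$. The twist condition (Assumption \ref{asp:cost_twist}) then forces $y_x=Y_t(x)$. Hence
\[
\phi_t(x)+\psi_t(Y_t(x))=c(x,Y_t(x))\qquad \mu_j\text{-a.e.},
\]
so $\push{(\id,Y_t)}{\mu_j}$ is supported on the contact set of the pair $(\phi_t,\psi_t)$. By \Cref{lem:Kan-examples} this coupling is optimal, and $K_c(\mu_j,\nu_t)=\int_\X c(x,Y_t(x))\,\mathrm{d}\mu_j$. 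The integrability hypothesis of that lemma is supplied by the growth bound \eqref{eq:cost_epsilon_property} together with $\mu_j\in\Pc(\X)$ and $\nu_t\in\Pc(\Y)$.

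\textbf{Step 2: choosing the other couplings.} For $i\neq j$, pick an optimal $\pi_{i,t}\in\Pi(\mu_i,\nu_t)$ and disintegrate it with respect to $\nu_t$. Gluing it with $\push{(\id,Y_t)}{\mu_j}$ along the $y$-variable, as in the gluing lemma used to show $\Pi_*\neq\emptyset$, produces an element of $\Pi_*(\mu_1,\ldots,\mu_m;\nu_t)$ whose $(x_j,y)$-marginal is $\push{(\id,Y_t)}{\mu_j}$. Because $\mu_j$ is absolutely continuous, the remaining coordinates can be realized, after enlarging if necessary, as measurable maps $\tilde{X}_{i,t}$ of $x$. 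Only the weak limits of $\push{(\tilde{X}_{i,t},\id)}{\mu_j}$ enter the final formula. This step is routine.

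\textbf{Step 3: the lower bound.} The lower bound in the proof of \Cref{prop:gg_derivative_bound} for $j\in I_+$ uses $\tilde{X}_{j,t}$, which is now $\id$. So the $j$-th term of the difference quotient is
\[
a_j\int_\X\frac{c(x,Y_t(x))-c(x,Y_0(x))}{t}\,\mathrm{d}\mu_j(x),
\]
which converges to $a_j\int_\X\partial_tY(0,x)\cdot\nabla_y c(x,Y_0(x))\,\mathrm{d}\mu_j$. The convergence follows from the uniform $C^1$ assumption on $t\mapsto Y(t,x)$ and dominated convergence, using $c\in C^2$. The terms with $i\in I_-$ and $i\in I_+\setminus\{j\}$ are handled exactly as before. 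This yields the refined $V_-$. If instead $j\in I_-$, the same substitution in the upper bound gives the refined $V_+$.

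\textbf{Main obstacle.} I expect the hardest part to be Step 1: identifying $Y_t$ with the $c$-gradient map of $\phi_t$ and justifying optimality on an unbounded domain. Two points need care. First, the minimizer in the $c$-transform must exist and satisfy the first-order condition; I would obtain this from the coercivity in \Cref{lem:c-tran-prop} part 6), which needs $\psi_t$ suitably bounded, or otherwise a truncation argument. Second, the integrability hypotheses of \Cref{lem:Kan-examples} must hold.
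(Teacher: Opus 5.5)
Your proposal is correct and follows the paper's argument: the paper observes that the $c$-gradient structure $\nabla_x c(x,Y_t(x))=\nabla\phi_t(x)$ makes $Y_t$ the optimal map from $\mu_j$ to $\nu_t$. Hence the $j$-th term of the difference quotient is exactly $a_j\int_{\X}\frac{c(x,Y_t(x))-c(x,Y_0(x))}{t}\,\mathrm{d}\mu_j(x)$, which is your choice $\tilde X_{j,t}=\mathrm{id}$, $\pi^-_{j,x}=\delta_x$, and index $j$ is then treated like the indices in $I_-$ in the lower bound. Your Step 1, via the contact set and \Cref{lem:Kan-examples}, supplies the justification of that optimality, which the paper asserts in one line.
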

\begin{remark}
    If $I_+=\{j\}$, then $V_-$ is independent of the curve.  
\end{remark}
\begin{proof}
    By our assumption on the structure of $Y$, it follows that $Y_t$ is the optimal transport map from $\mu_j$ to $\nu_t$.  As a result, we have \[
   \frac{ K_c(\mu_j, \nu_t)-K_c(\mu_j,\nu)}{t}=\int_{\X} \frac{c(x,Y_t(x))-c(x,Y_0(x))}{t}\, \mathrm{d}\mu_j(x).
    \] 
    Hence, in the argument of \Cref{prop:gg_derivative_bound}, we can treat $j$ as we treat the indices in $I_-$  when bounding the difference quotient $\frac{\B(\nu_t)-\B(\nu)}{t}$ from below. The rest of the argument is identical to \Cref{prop:gg_derivative_bound}.
\end{proof}

Finally, we establish a congruence condition analogous to that arising in the classical barycenter problem. A novel treatment of this constraint leads to the new dual formulation presented in \Cref{sec:dual}.

\begin{proposition}\label{prop:dual-sum-zero}
If $\bar{\nu}\in \Pc(\Y)$ is a local minimizer of $\B$, then there exist optimal $c$-concave Kantorovich potentials $\{f_i\}_{i=1}^n$, where $f_i:\Y\to\R$ for the transport of $\bar{\nu}$ to $\mu_i$ such that $\displaystyle \sum_{i=1}^m a_i f_{i}(y)=\inf_{\tilde{y}\in \Y} \sum_{i=1}^m a_if_{i}(\tilde{y})=0$
for $\bar{\nu}$-almost every $y\in \Y$.
\end{proposition}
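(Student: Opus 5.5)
The plan is to perturb $\bar\nu$ only along linear (convex) interpolations $\nu_t=(1-t)\bar\nu+t\sigma$, $\sigma\in\Pc(\Y)$, and to exploit the fact that $\nu\mapsto K_c(\mu_i,\nu)$ is convex along such interpolations. Indeed, by the strong duality in \Cref{thm:Brenier} it is a supremum of functionals $\nu\mapsto\int f^c\,\mathrm{d}\mu_i+\int f\,\mathrm{d}\nu$ that are affine in $\nu$. Hence the terms with $a_i<0$ are concave, and I will replace them by affine majorants.

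\textbf{Step 1: majorize the negative-weight terms.} For each $i\in I_-$, fix an optimal $c$-concave potential $f_i$ for the pair $(\bar\nu,\mu_i)$; it exists by \Cref{thm:Brenier}. For every $\nu$ we have $K_c(\mu_i,\nu)\geqslant\int f_i^c\,\mathrm{d}\mu_i+\int f_i\,\mathrm{d}\nu$, so multiplying by $a_i<0$ gives
\[
\B(\nu)\leqslant G(\nu):=\sum_{i\in I_+}a_iK_c(\mu_i,\nu)+\sum_{i\in I_-}a_i\Big(\int f_i^c\,\mathrm{d}\mu_i+\int f_i\,\mathrm{d}\nu\Big),
\]
with equality at $\nu=\bar\nu$. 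Therefore $\bar\nu$ is also a local minimizer of $G$. The functional $G$ is convex along linear interpolations, and $\nu_t\to\bar\nu$ as $t\to0$ in whatever sense "local" is meant. Convexity then upgrades local minimality to $G(\bar\nu)\leqslant G(\sigma)$ for all $\sigma\in\Pc(\Y)$.

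\textbf{Step 2: dualize the positive part.} Write $g=\sum_{i\in I_-}a_if_i$ and $A=\sum_{i\in I_+}a_i=1-\sum_{i\in I_-}a_i>0$. Minimizing $G$ is an unsigned barycenter problem with a linear term, and I would write
\[
\min_\nu G(\nu)=\min_\nu\ \sup_{\{f_i\}_{i\in I_+}}\ \sum_{i\in I_+}a_i\int f_i^c\,\mathrm{d}\mu_i+\int\Big(\sum_{i=1}^ma_if_i\Big)\mathrm{d}\nu+\text{const}.
\]
The integrand is affine in $\nu$ and concave in $(f_i)_{i\in I_+}$ by \Cref{lem:c-tran-prop}. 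A minimax exchange therefore gives the dual problem
\[
\sup_{\{f_i\}_{i\in I_+}}\ \sum_{i\in I_+}a_i\int f_i^c\,\mathrm{d}\mu_i+\inf_{y\in\Y}\sum_{i=1}^ma_if_i(y),
\]
since the infimum over $\nu$ of $\int h\,\mathrm{d}\nu$ is $\inf h$ (approximate it with Dirac masses).

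\textbf{Step 3: read off the conditions.} Suppose maximizers $f_i$, $i\in I_+$, exist and there is no duality gap. Evaluating the chain of inequalities at $\bar\nu$ then forces two things:
\[
\int f_i^c\,\mathrm{d}\mu_i+\int f_i\,\mathrm{d}\bar\nu=K_c(\mu_i,\bar\nu)\quad\text{for each } i\in I_+,
\qquad
\int\sum_{i=1}^m a_if_i\,\mathrm{d}\bar\nu=\inf_{\Y}\sum_{i=1}^m a_if_i .
\]
The first says each such $f_i$ is an optimal potential for $(\bar\nu,\mu_i)$. The second says the function $\sum_ia_if_i$ attains its infimum $\bar\nu$-a.e. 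We may replace each $f_i$ by $f_i^{cc}$ to keep $c$-concavity, which only increases the objective. Finally, subtract the same constant $\kappa=\inf\sum_ia_if_i$ from every $f_i$. This preserves optimality, because $(f-\kappa)^c=f^c+\kappa$, and since $\sum_ia_i=1$ it lowers $\sum_ia_if_i$ by exactly $\kappa$, making the infimum $0$.

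\textbf{Main obstacle.} The hard part is Step 2: justifying the minimax exchange and the attainment of the dual supremum on the noncompact sets $\X,\Y$. I would first try a Sion-type minimax theorem on the sublevel sets of $G$. These are tight and weakly compact by the coercivity estimate in the proof of \Cref{thm:existence}, and $G$ is lower semicontinuous there by the same argument. For attainment, I would take a maximizing sequence, normalize by constants, and pass to $c$-concave potentials. Assumptions \ref{asp:cost_compact}--\ref{asp:cost_growth} give local equi-Lipschitz bounds (the modulus of continuity of $c$) and local uniform bounds via $c(x_0,\cdot)$ and $c(\cdot,y_0)$, so Arzel\`a--Ascoli yields a locally uniform limit. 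The upper semicontinuity of $\int f^c\,\mathrm{d}\mu_i$ would then come from Fatou's lemma, using the growth bounds.
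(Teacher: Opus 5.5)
Your strategy is a genuinely different route from the paper's, and its outer layers are sound. Step~1 majorizes each negative-weight term by the affine functional built from a fixed optimal potential of $\bar\nu$. This makes $\bar\nu$ a local, hence (by convexity along every segment $(1-t)\bar\nu+t\sigma$) global, minimizer of $G$, and that step is correct. Step~3 is also correct: it gives optimality of the $I_+$ potentials, $\bar\nu$-a.e.\ attainment of $\inf_{\Y}\sum_i a_if_i$, $c$-concavity via $f\mapsto f^{cc}$, and the normalization via $(f-\kappa)^c=f^c+\kappa$ and $\sum_i a_i=1$.

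The genuine gap is Step~2, which carries all the weight, and the tool you propose for it fails. Write $L(\nu,f)$ for your Lagrangian. Sion on a compact sublevel set $K$ of $G$ gives $\min_K G=\sup_f\min_{\nu\in K}L(\nu,f)$. But $\min_{\nu\in K}L(\nu,f)\geqslant\inf_{\nu\in\Pc(\Y)}L(\nu,f)=\sum_{i\in I_+}a_i\int f_i^c\,\mathrm{d}\mu_i+\inf_{\Y}\sum_{i=1}^m a_if_i$, so you only recover weak duality: shrinking the minimizer's domain raises the inner infimum and cannot close the gap. A minimax argument would instead need coercivity of $\nu\mapsto L(\nu,f_0)$ for one \emph{fixed} dual element $f_0$, e.g.\ $f_{0,i}=(1-\delta)c(x_0,\cdot)$. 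That works because $(1-\delta)\sum_{i\in I_+}a_i-\sum_{i\in I_-}|a_i|>0$ for small $\delta$, but you would still have to handle lower semicontinuity in $\nu$ for unbounded potentials.

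A cleaner repair goes through the multi-marginal problem. By gluing and a measurable selection, $\min_\nu G$ equals, up to a constant, $\min_{\pi\in\Pi(\mu_i:\,i\in I_+)}\int C\,\mathrm{d}\pi$ with $C(x_{I_+})=\inf_{y}\big[\sum_{i\in I_+}a_ic(x_i,y)+g(y)\big]$. Assumption~3 and $\sum_i a_i=1$ sandwich $C$ between sums of $\mu_i$-integrable functions. Multi-marginal Kantorovich duality then gives (near-)optimal $\phi_i$ with $\sum_{i\in I_+}\phi_i(x_i)\leqslant C(x_{I_+})$. Setting $f_i:=(\phi_i/a_i)^c$ yields $c$-concave potentials with $f_i^c\geqslant\phi_i/a_i$ and $\sum_{i\in I_+}a_if_i+g\geqslant 0$, which is exactly the missing no-gap inequality. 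Attainment can then come from your compactness sketch: after normalizing $f_n(y_0)=0$, the dual objective is upper semicontinuous under pointwise convergence, since $\limsup_n f_n^c\leqslant f^c$ and $f_n^c\leqslant c(\cdot,y_0)$.

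The paper never proves a global duality. It perturbs along $\nu_t=(1-t)\bar\nu+t\nu$, using a fixed optimal potential of $\bar\nu$ for $i\in I_-$ (your Step~1 majorant) and optimal potentials $f_{i,t}$ of $\nu_t$ for $i\in I_+$. This gives $\B(\nu_t)-\B(\bar\nu)\leqslant t\int\sum_i a_if_{i,t}\,\mathrm{d}(\nu-\bar\nu)$, and the paper then lets $t\to0$ via local equi-Lipschitz compactness and stability of potentials. This first-order (envelope) argument needs only pairwise Kantorovich duality plus compactness. However, its limit potentials are extracted separately for each direction $\nu$, while the concluding step (testing against $\nu=\delta_{\tilde y}$ for every $\tilde y$) needs one tuple valid for all directions. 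That requires an extra minimax over the optimal potentials of $\bar\nu$, which the paper leaves implicit. Your global-duality route, once Step~2 is properly justified, produces a single direction-independent tuple directly. The price is a real duality theorem for the unsigned barycenter problem with potential $g$ on noncompact spaces.
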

\begin{proof}
Given some $\nu\in \Pc(\Y)$, set $\nu_t:=(1-t)\bar{\nu}+t\nu$ for each $t\in [0,1]$. For each $i$ and $t\in [0,1]$, let $f_{i,t}:\Y\to\R$ be an optimal $c$-concave Kantorovich potential for the transport of $\nu_t$ to $\mu_i$. After possibly adjusting each $f_{i,t}$ by a constant, we may assume that there exists a point $y_0\in \textup{spt}(\bar{\nu})$ such that $f_{i,t}(y_0)=0$ for each $i$.

Since $\left\{f_{i,t}\right\}$ are locally Lipschitz, there exists a subsequence $t_n\to 0$ such that $f_{i,t_n}$ converges uniformly to a limit $f_i$ 
on compact subsets of $\Y$.
Note that the limiting $f_i$ are necessarily optimal $c$-concave Kantorovich potentials for the transport of $\bar{\nu}$ to $\mu_i$.

Since $\bar{\nu}$ is a local min, it follows that 
    \[
    0\leqslant \lim_{n\to\infty} \frac{\B(\nu_{t_n})-\B(\bar{\nu})}{t_n}.
    \]
On the other hand, Kantorovich duality gives us
    \[
    \frac{\B(\nu_{t_n})-\B(\bar{\nu})}{t_n}\leqslant \sum_{i\in I_+}a_i \int_{\Y} f_{i,t_n}(y)\,\mathrm{d}(\nu-\bar{\nu})(y)+\sum_{i\in I_-}a_i \int_{\Y} f_{i}(y)\, \mathrm{d}(\nu-\bar{\nu})(y)
    \]
Sending $n\to\infty$ and using the convergence of the $f_{i,n}$ and the previous inequality, it follows that 
    \[
    0\leqslant  \int_{\Y} \sum_{i=1}^m a_if_{i}(y) \,\mathrm{d}(\nu-\bar{\nu})(y)
    \]
Hence, 
\[
 \sum_{i=1}^m a_i f_{i}(y)=\inf_{\tilde{y}\in \Y} \sum_{i=1}^m a_if_{i}(\tilde{y})
\]
for $\bar{\nu}$-almost every $y\in \Y$.  Since we have $f_{i}(y_0)=0$ for each $i$, it follows that the infimum above has value 0.
\end{proof}

\subsection{The special case of only one positive weight}\label{sub:one-pos}

We now turn our attention to the special case where only one of the weights is positive, i.e., $I_+=\{1\}$. In this case we can guarantee convexity and uniqueness properties of the signed barycenter problem, as long as the cost function satisfies a certain convexity condition.  This generalizes the result of \cite{tornabene2024generalizedwassersteinbarycenters} for the quadratic cost only. 

When there is only one positive weight and the cost is quadratic, this special case has already been noted in prior work \citep{Gallouet2025Metric, tornabene2024generalizedwassersteinbarycenters}. As explained in \cite{Gallouet2025Metric}, an advantage in this setting is that $\B(\nu)$ is 1-convex along generalized geodesics with base measure $\mu_1$. This follows from \citet[Lemma 9.2.1 and Proposition 9.3.12]{Ambrosio2008Gradient}, which implies that $\frac{a_1}{2} W_2^2(\cdot,\mu_1)$ is $a_1$-convex along generalized geodesics with base measure $\mu_1$ and $-\frac{\abs{a_i}}{2}W_2^2(\cdot,\mu_i)$ is $(-\abs{a_i})$-convex along generalized geodesic with any base measure.

\begin{theorem}\label{thm:convex-unique}
If there exists a continuous function $\theta:\X\times\Y^2\times [0,1]\to\Y$ such that   $\theta(x_1, y_0,y_1, 0)=y_0$,  $\theta(x_1, y_0,y_1, 1)=y_1$, and for all $(x_1,\ldots, x_m, y_0,y_1)\in \X^{m}\times\Y^2$ 
    \begin{equation}\label{eq:cost_curve_convex}
    t\mapsto \sum_{i=1}^m a_i c\big(x_i, \theta(x_1, y_0,y_1, t)\big)
    \end{equation}
is (strictly) convex, then for any $\nu_0, \nu_1\in \Pc(\Y)$ there exists an explicit continuous interpolating curve $\nu_t$ such that
    \[
    t\mapsto \B(\nu_t)
    \]
is (strictly) convex. Consequently, if $t\mapsto \B(\nu_t)$ is strictly convex, then the signed barycenter is unique.
\end{theorem}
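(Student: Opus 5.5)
We work with a multi-marginal coupling glued along the single positive-weight measure $\mu_1$, in the spirit of generalized geodesics with base $\mu_1$. Given $\nu_0,\nu_1\in\Pc(\Y)$, pick an optimal plan $\pi_0\in\Pi(\mu_1,\nu_0)$ and an optimal plan $\pi_1\in\Pi(\mu_1,\nu_1)$. Glue them along the common marginal $\mu_1$ (gluing lemma) to obtain $\sigma\in\P(\X\times\Y\times\Y)$ with marginals $(\mu_1,\nu_0,\nu_1)$ and optimal two-marginal projections. Then define
\[
\nu_t:=\push{\big(\theta(x_1,y_0,y_1,t)\big)}{\sigma},
\]
which is explicit, satisfies $\nu_t|_{t=0}=\nu_0$ and $\nu_t|_{t=1}=\nu_1$, and is weakly continuous in $t$ by continuity of $\theta$ and dominated convergence on bounded test functions. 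We will also need $\nu_t\in\Pc(\Y)$. This should follow from the growth Assumption \ref{asp:cost_growth}, although a growth bound on $\theta$ may be needed; we treat this as a technical point.

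The key step is an upper bound for $\B(\nu_t)$ by an integral of the convex function \eqref{eq:cost_curve_convex}. Because only $a_1>0$, we only need an upper bound on $K_c(\mu_1,\nu_t)$, and this is provided by the admissible plan $\push{(x_1,\theta(x_1,y_0,y_1,t))}{\sigma}$. For the negative terms we need a coupling that is exact at $t$. For each $i\ge 2$, take an optimal plan between $\nu_t$ and $\mu_i$ and glue it to $\sigma$ along the variable $y_t=\theta(x_1,y_0,y_1,t)$. This gives a measure $\gamma_t$ on $\X^m\times\Y^2$ with the following properties:
\begin{itemize}
\item the $(x_1,y_0,y_1)$-marginal is $\sigma$;
\item $\int c(x_i,\theta)\,\mathrm d\gamma_t=K_c(\mu_i,\nu_t)$ for each $i\ge2$;
\item $\int c(x_1,\theta)\,\mathrm d\gamma_t\ge K_c(\mu_1,\nu_t)$.
\end{itemize}
Since $a_1>0$, we obtain
\[
\B(\nu_t)\le \int \sum_i a_i c(x_i,\theta(x_1,y_0,y_1,t))\,\mathrm d\gamma_t .
\]
At the endpoints, the $(x_i,y_s)$-marginals of $\gamma_t$ ($s=0,1$) are couplings of $\mu_i$ and $\nu_s$. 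Hence $\int c(x_i,y_s)\,\mathrm d\gamma_t\ge K_c(\mu_i,\nu_s)$, and with $a_i<0$ this gives $a_i\int c(x_i,y_s)\,\mathrm d\gamma_t\le a_iK_c(\mu_i,\nu_s)$ — the inequality goes the wrong way for a lower bound. We want to compare the integrand pointwise via convexity:
\[
\sum_i a_i c(x_i,\theta_t)\le (1-t)\sum_i a_ic(x_i,y_0)+t\sum_i a_i c(x_i,y_1).
\]
Integrating against $\gamma_t$, the $i=1$ terms give exactly $a_1K_c(\mu_1,\nu_s)$ by optimality of $\pi_s$. The $i\ge2$ terms give $a_i\int c(x_i,y_s)\,\mathrm d\gamma_t\le a_iK_c(\mu_i,\nu_s)$, which is now the favorable direction. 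Therefore
\[
\B(\nu_t)\le(1-t)\B(\nu_0)+t\B(\nu_1).
\]
Strictness follows in the same way when \eqref{eq:cost_curve_convex} is strictly convex and $\nu_0\ne\nu_1$: in that case $\sigma(\{y_0\ne y_1\})>0$, so the pointwise inequality is strict on a set of positive $\gamma_t$-measure, and all the integrals involved are finite.

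This argument proves convexity along the curve between its endpoints. Full convexity of $t\mapsto\B(\nu_t)$ between arbitrary intermediate times $s<u$ needs more, since the sub-curve from $\nu_s$ to $\nu_u$ is not built from optimal plans for $\mu_1$. I expect this to be the main obstacle. We will first try to run the argument above with the glued coupling $\sigma$ kept fixed, re-parametrizing $\theta$ between $\theta_s$ and $\theta_u$. For that we would need a compatibility (semigroup) property of $\theta$, or else only the endpoint inequality. The endpoint inequality is in any case sufficient for uniqueness: if $\nu_0\neq\nu_1$ were two signed barycenters (they exist by \Cref{thm:existence}), strict inequality at $t=1/2$ would give $\B(\nu_{1/2})<\min\B$, a contradiction.
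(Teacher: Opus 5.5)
Your proposal is correct and is essentially the paper's proof: glue optimal plans from $\mu_1$ to $\nu_0,\nu_1$, push forward by $\theta(\cdot,t)$, and bound the $\mu_1$ term by the glued plan. You then realize the negative terms exactly by couplings glued along $\theta_t$; the paper writes this step as a supremum over $\Gamma\in\Pi(\mu_2,\ldots,\mu_m,\gamma)$, which your $\gamma_t$ attains. Finally you integrate the pointwise convexity inequality, as the paper does.

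The paper likewise proves only the endpoint inequality $\B(\nu_t)\leqslant(1-t)\B(\nu_0)+t\B(\nu_1)$ and calls it convexity, so your concern about intermediate times is not resolved there either. Your strictness argument via $\sigma(\{y_0\neq y_1\})>0$ is cleaner than the paper's uniform-gap argument. Finally, $\nu_t\in\Pc(\Y)$ needs no growth bound on $\theta$: it follows from the convexity hypothesis with $x_2=\cdots=x_m=x_0$, combined with Assumption \ref{asp:cost_growth} and $a_1>1$.
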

\begin{remark}
When $c$ is the quadratic cost or other costs based on the Euclidean distance, it is most natural to choose the standard convex interpolation $\theta(x_1, y_0,y_1, t)=ty_1+(1-t)y_0$. In particular, the standard convex interpolation in the quadratic cost leads to the fact that $t\mapsto \B(\nu_t)$ is strictly convex. This recovers the generalized geodesic convexity and uniqueness results from \cite{tornabene2024generalizedwassersteinbarycenters}.

However, for general costs, often the most natural choice is the family of curves satisfying the equation
\[
\nabla_x c\big(x_1,\theta(x_1, y_0,y_1, t)\big)=t\nabla_x c(x_1,y_1)+(1-t)\nabla_x c(x_1,y_0),
\]
as these are the curves that induce generalized geodesics between probability measures with respect to the cost $c$.
\end{remark}
\begin{proof}
For $j=0,1$, let $\pi^j\in \Pi(\mu_1, \nu_j)$ be the optimal plan transporting $\mu_1$ to $\nu_j$.  By the Gluing Lemma there exists a measure $\gamma\in \Pi(\mu_1,\nu_0, \nu_1)$ such that $P^{0}_{\#}\gamma=\pi^0$ and $P^{1}_{\#}\gamma=\pi^1$ where $P^{j}:\X\times\Y^2\to\X\times\Y$ is the projection map $P^j(x_1,y_0,y_1)=(x_1,y_j)$ for $j\in \{0,1\}$.  We then define two maps $Q_{\theta, t}:\X\times\Y^2\to\Y, \tilde{Q}_{\theta, t}:\X^2\times\Y^2\to\X\times\Y$
    \[Q_{\theta,t}(x_1,y_0,y_1)=\theta(x_1, y_0,y_1, t)\qquad\textrm{and}\qquad \tilde{Q}_{\theta, t}(x,x_1,y_0,y_1)=(x,\theta(x_1, y_0,y_1, t)),\]
and use them to define pushforward measures.

Define $\nu_t:=Q_{\theta, t\, \#}\gamma$. Every coupling $\tilde{\pi}\in \Pi(\mu_i, \nu_t)$ can be represented as $\tilde{\pi}=\tilde{Q}_{\theta,t\, \#}\Gamma_i$ for some $\Gamma_i\in \Pi(\mu_i,\gamma)$. Therefore, for any $i\in \{2,\ldots, m\}$
\[
K_c(\mu_i,\nu_t)=\inf_{\Gamma_i\in \Pi(\mu_i, \gamma)} \int_{\X^2\times\Y^2} c(x_i, \theta(x_1, y_0,y_1, t))\,\mathrm{d}\Gamma_i(x_i, x_1, y_0,y_1).
\]
Since we can glue each of these measures along $\gamma$, it follows that 
\begin{equation}\label{eq:an_negative_sum_formula}
\sum_{i=2}^m |a_i|K_c(\mu_i,\nu_t)=\inf_{\Gamma\in \Pi(\mu_2,\ldots, \mu_m, \gamma)} \int_{X^m\times\Y^2} \sum_{i=2}^m |a_i| c\big(x_i, \theta(x_1, y_0,y_1, t)\big)\, \mathrm{d}\Gamma. 
\end{equation}
Next, we observe that
\begin{equation}\label{eq:an_one_upper}
K_c(\mu_1,\nu_t)\leqslant \int_{\X\times\Y^2} c\big( x_1,\theta(x_1, y_0,y_1, t)\big)\,\mathrm{d}\gamma( x_1, y_0,y_1)
\end{equation}
Thus, combining \eqref{eq:an_negative_sum_formula} and \eqref{eq:an_one_upper} and the convexity of the curve $\theta$,  we obtain the inequality
    \[
    \begin{aligned}
    \B(\nu_t)&\leqslant \sup_{\Gamma\in \Pi(\mu_2,\ldots, \mu_m, \gamma)} \int_{\X^m\times\Y^2} \sum_{i=1}^m a_i c\big(x_i, \theta(x_1, y_0,y_1, t)\big)\,\mathrm{d}\Gamma(x_2,\ldots, x_m, x_1, y_0, y_1)\\
    &\leqslant \sup_{\Gamma\in \Pi(\mu_2,\ldots, \mu_m, \gamma)} \int_{\X^m\times\Y^2} \Big[t\sum_{i=1}^m a_i c\big(x_i, y_0\big)+(1-t)\sum_{i=1}^m a_i c\big(x_i, y_1\big)\Big]\,\mathrm{d}\Gamma\\
    &\leqslant \sum_{j=0}^1 \lambda_j \Big( \int_{\X\times\Y^2} a_1 c(x_1,y_j)\,\mathrm{d}\gamma+\sup_{\Gamma^j_i\in \Pi(\mu_i,\gamma)} \sum_{i=2}^m a_i\int_{\X^2\times\Y^2} c\big(x_i, y_j)\, \mathrm{d}\Gamma_i^j(x_i,x_1,y_0,y_1)\Big),
    \end{aligned}
    \]
where we set $\lambda_0:=(1-t)$ and $\lambda_1:=t$ for notational compactness.  Recalling that $P^{j}_{\#}\gamma=\pi^j$ is the optimal transport plan between $\mu_1$ and $\nu_j$, it follows that the previous line is equal to 
    \[
    t\B(\nu_1)+(1-t)\B(\nu_0),
    \]
and hence we have shown that
    \[
    \B(\nu_t)\leqslant t\B(\nu_1)+(1-t)\B(\nu_0).
    \]

Finally, if (\ref{eq:cost_curve_convex}) is strictly convex, there exists a continuous positive function $\omega:\X^m\times\Y^2\times (0,1)\to (0,\infty)$ such that 
\[
\sum_{i=1}^m a_i c\big(x_i, \theta(x_1, y_0,y_1, t)\big)+\omega(x_1,\ldots, x_m,y_0,y_1,t)=t\sum_{i=1}^m a_i c\big(x_i, y_1\big)+(1-t)\sum_{i=1}^m a_i c\big(x_i, y_0\big)
\]
 for all $t\in (0,1)$.  Since $\mu_1,\ldots, \mu_m\in \Pc(\X)$ and $\nu_0,\nu_1\in \Pc(\Y)$, it follows that there exist a compact set $K\subset \X^m\times\Y^2$ such that for any $\Gamma\in \Pi(\mu_2,\ldots, \mu_m,\gamma)$ we have $\Gamma(K)>1/2$.  Furthermore, for any $t\in (0,1)$ there must exist some $\delta_t>0$ such that $\omega(x_1,\ldots, x_m, y_0,y_1,t)>\delta_t$ for all $(x_1,\ldots, x_m, y_0,y_1)\in K$.  Hence, after retracing the steps of the previous argument, it follows that
 \[
 \B(\nu_t)\leqslant t\B(\nu_1)+(1-t)\B(\nu_0)-\frac{1}{2}\delta_t<t\B(\nu_1)+(1-t)\B(\nu_0),
 \]
so $t\mapsto \B(\nu_t)$ is strictly convex.
\end{proof}
\section{The Dual problem}\label{sec:dual}

In this section, we develop a new dual formulation to the signed barycenter problem \eqref{eq:bary-func}, which is different from the multimarginal approach \citep{Agueh2011Barycenters, Gangbo1998Multidimensional}. As in the classical OT, the dual formulation explores the optimization over the space of functions, which is typically more tractable analytically. The numerical solver based on the dual formulation is more practical and stable, see \cite{Jacobs2020BF}.

Since we have seen the special case where only one of the weights is positive in \Cref{sub:one-pos}, we will assume throughout this section  there are at least two marginals associated with positive weights, i.e., $I_+\setminus \{1\}\neq \emptyset$. We also assume that at least one marginal associated with a \textit{positive weight} is absolutely continuous, and let $\mu_1\in \Pc^{\textup{AC}}(\X)$ without loss of generality. 

From \Cref{sub:OT}, we may replace each term in \eqref{eq:bary-func} by its own dual formulation:
    \[
    \begin{aligned}
    &a_i K_c(\mu_i,\nu)=\sup_{f_i}a_i[\int_{\X} f^c_i\, \mathrm{d}\mu_i+\int_{\Y} f_i\, \mathrm{d}\nu],\quad \textup{for~} i\in I_+;\\
    &a_i K_c(\mu_i,\nu)=\inf_{f_i}a_i [\int_{\X} f^c_i\, \mathrm{d}\mu_i + \int_{\Y} f_i\, \mathrm{d}\nu],\quad \textup{for~} i\in I_-. 
    \end{aligned}
    \]
Here optimizations are over $c$-concave functions. For the remainder of this paper, all dual formulations are assumed to be restricted to $c$-concave functions. Potentials $f_i$ for $i\in I_+$ are referred as maximizing variables, and $f_i$ for $i\in I_-$ are minimizing variables. We also denote the tuple of dual potentials by $f_{[I]}=(f_{1},\ldots,f_{m})$. The optimizers $\bar{f}_{[I]}=(\bar{f}_1,\ldots,\bar{f}_m)$ are referred as (optimal) \textit{Kantorovich potentials} for the transport from $\nu$ to $\mu_i$. 

We have seen these Kantorovich potentials satisfy a balance equation in \Cref{prop:dual-sum-zero}. Instead, we dualize all but the only term in \eqref{eq:bary-func} to tackle this constraint. The leftover term corresponds to the absolutely continuous marginal. Assuming $\mu_1\in \Pc^{\textrm{AC}}(\X)$ , we have
    \begin{equation}\label{eq:dual-replace-po-f}
    \mathscr{B}(\nu)=\inf_{f_{[I_-]}}\sup_{f_{[I_+\setminus \{1\}]}} \sum_{i=2}^m \int_{\X} a_i f^c_i\, \mathrm{d}\mu_i + \int_{\Y} (\sum_{i=2}^m a_{i} f_{i})\, \mathrm{d}\nu + a_1 K_c(\mu_1,\nu).
    \end{equation}

The order of $\sup$ and $\inf$ are interchangeable at this stage due to the separability. 

\begin{lemma}\label{lem:dual}
Under our assumptions on the cost $c$, given any probability measure $\mu\in \Pc^{\textup{AC}}(\X)$, and any bounded continuous function $\phi$, we have
    \begin{equation*}
    \inf_{\nu\in \Pc(\Y)} \int_{\Y} \phi\, \mathrm{d}\nu +  K_c(\mu,\nu)=\int_{\X} (-\phi)^{ c}\, \mathrm{d}\mu.
    \end{equation*}
Furthermore, the minimizer is given by $\bar{\nu}=\push{T_{(-\phi)^c}}{\mu}$.
\end{lemma}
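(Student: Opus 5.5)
We prove the identity by establishing the two inequalities separately, using Kantorovich duality for the lower bound and the explicit map of \Cref{lem:Gangbo} for the upper bound. Set $g:=-\phi\in C_b(\Y)$. Then $g^c(x)=\inf_{y}\,c(x,y)+\phi(y)$, and by part 6) of \Cref{lem:c-tran-prop} this infimum is attained for every $x$.

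\emph{Lower bound.} Fix $\nu\in\Pc(\Y)$ and any $\pi\in\Pi(\mu,\nu)$. Pointwise we have $c(x,y)+\phi(y)\geqslant g^c(x)$. Integrating against $\pi$ gives
\[
\int_{\X\times\Y} c\,\mathrm{d}\pi+\int_\Y\phi\,\mathrm{d}\nu\geqslant\int_\X g^c\,\mathrm{d}\mu .
\]
Taking the infimum over $\pi$ yields $K_c(\mu,\nu)+\int\phi\,\mathrm{d}\nu\geqslant\int g^c\,\mathrm{d}\mu$ for every $\nu$. All integrals are finite: $\phi$ is bounded, and $g^c(x)\leqslant c(x,y_0)+\sup\abs{\phi}$ together with $g^c\geqslant\inf c-\sup\abs{\phi}\geqslant -\sup\abs{\phi}$ (since $c\geqslant 0$) shows $g^c\in L^1(\mu)$ for $\mu\in\Pc(\X)$. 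Equivalently, this step is weak duality in \eqref{DP2} with the admissible pair $(g^c,g)$.

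\emph{Upper bound and attainment.} Since $c$ is $C^2$, $g^c$ is locally Lipschitz, being an infimum of the functions $c(\cdot,y)+\phi(y)$ whose local Lipschitz constants are controlled on the compact sets where the minimizers live. By Rademacher's theorem and $\mu\ll$ Lebesgue, $g^c$ is differentiable $\mu$-a.e. At such a point $x$, any minimizer $y$ of $y\mapsto c(x,y)+\phi(y)$ satisfies the first-order condition $\nabla g^c(x)=\nabla_x c(x,y)$, by the envelope argument of \Cref{lem:Gangbo} 1): the function $x'\mapsto c(x',y)-g^c(x')$ is minimized at $x$. The twist condition then forces $y=T_{g^c}(x)$, so the minimizer is unique and depends measurably on $x$.

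Define $\bar\nu:=\push{T_{g^c}}{\mu}$. Then
\[
\int c(x,T_{g^c}(x))\,\mathrm{d}\mu+\int\phi\,\mathrm{d}\bar\nu=\int\bigl[c(x,T_{g^c}(x))+\phi(T_{g^c}(x))\bigr]\mathrm{d}\mu=\int g^c\,\mathrm{d}\mu .
\]
Since $(\id,T_{g^c})_\#\mu\in\Pi(\mu,\bar\nu)$, the left side dominates $K_c(\mu,\bar\nu)+\int\phi\,\mathrm{d}\bar\nu$. Combined with the lower bound, this gives equality and shows that $\bar\nu$ is a minimizer.

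It remains to check that $\bar\nu\in\Pc(\Y)$, i.e.\ $\int c(x_0,T_{g^c}(x))\,\mathrm{d}\mu<\infty$, and we expect this integrability to be the main obstacle, together with the measurability of $T_{g^c}$. Our approach is as follows. First, $c(x,T_{g^c}(x))=g^c(x)-\phi(T_{g^c}(x))\leqslant c(x,y_0)+2\sup\abs{\phi}$, which is $\mu$-integrable. Second, the lower inequality of \eqref{eq:cost_epsilon_property} gives $(1-\varepsilon)c(x_0,y)\leqslant c(x,y)+B_\varepsilon c(x,y_0)$, which bounds $c(x_0,T_{g^c}(x))$ by integrable quantities. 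For measurability, we would either invoke a measurable selection theorem or note that $T_{g^c}$ coincides $\mu$-a.e.\ with the continuous-on-differentiability-points map $(\nabla_xc(x,\cdot))^{-1}\circ\nabla g^c$.
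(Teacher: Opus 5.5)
Your argument is correct, and it is organized differently from the paper's.

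\textbf{The paper's route.} The paper argues by a Monge reformulation. It writes $K_c(\mu,\nu)$ as an infimum over maps $T$ with $\push{T}{\mu}=\nu$, which implicitly uses the absolute continuity of $\mu$ through \Cref{thm:Brenier} 2). It then merges $\inf_\nu$ and $\inf_T$ into one infimum over maps and moves that infimum inside the integral, citing \citet{Gangbo2004introduction} and part 6) of \Cref{lem:c-tran-prop} for existence of the pointwise minimizer.

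\textbf{Your route.} You sandwich the value instead.
\begin{itemize}
\item Your lower bound is weak duality against an arbitrary coupling. It never uses absolute continuity, so it holds for every $\mu\in\Pc(\X)$.
\item Your upper bound is an explicit competitor. Absolute continuity enters only here, through Rademacher's theorem and the twist condition, to make the minimizer of $y\mapsto c(x,y)+\phi(y)$ unique and equal to $T_{(-\phi)^c}(x)$ for $\mu$-a.e.\ $x$.
\end{itemize}

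\textbf{What your route buys.} It makes explicit two points the paper's computation passes over silently.
\begin{itemize}
\item The measurability needed to exchange $\inf_T$ and $\int$. Your second option is valid and easy to justify. The argmin correspondence $x\mapsto\argmin_y\,[c(x,y)+\phi(y)]$ has closed graph, because $(-\phi)^c$ is continuous. It is also locally uniformly bounded, by Assumptions \ref{asp:cost_compact}--\ref{asp:cost_growth}. Hence it is continuous on the Borel set of differentiability points of $(-\phi)^c$, where it is single-valued.
\item The fact that $\push{T_{(-\phi)^c}}{\mu}$ actually lies in $\Pc(\Y)$. You obtain this correctly from the lower inequality in \eqref{eq:cost_epsilon_property}.
\end{itemize}

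Your sandwich also gives uniqueness of the minimizer almost for free. The paper never proves this, although the statement says ``the minimizer.'' Suppose $\nu$ attains the infimum and $\pi\in\Pi(\mu,\nu)$ is optimal. Then $\int\bigl[c(x,y)+\phi(y)-(-\phi)^c(x)\bigr]\,\mathrm{d}\pi=0$ with a nonnegative integrand. So $\pi$ is concentrated on the graph of $T_{(-\phi)^c}$, and therefore $\nu=\bar\nu$.

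\textbf{What the paper's route buys.} It is shorter, and it makes plain that the identity is just pointwise minimization of $y\mapsto c(x,y)+\phi(y)$.
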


Thanks to \Cref{lem:dual} and the scaling law in \Cref{lem:c-tran-prop}, considering interchanging inf w.r.t $\nu$ and optimizing w.r.t dual variables $f_{i}$, we obtain from \eqref{eq:dual-replace-po-f} 
    \[
    \inf_{\nu}\mathscr{B}(\nu)\geqslant \inf_{f_{[I_-]}}\sup_{f_{[I_+\setminus \{1\}]}} \sum_{i=2}^m \int_{\X} a_i f^c_i\, \mathrm{d}\mu_i + a_1\int_{\X} (-\sum_{i=2}^m \frac{a_{i}}{a_1} f_{i})^c \, \mathrm{d}\mu_1.
    \]
Thus, the \textit{dual functional} is given by:
    \begin{equation}\label{eq:dual-func-po-f}  
    \D(f_{[I \setminus \{1\}]})=\sum_{i=2}^m \int_{\X} a_i f^c_i\, \mathrm{d}\mu_i + a_1\int_{\X} (-\sum_{i=2}^m \frac{a_{i}}{a_1} f_{i})^c \, \mathrm{d}\mu_1.
    \end{equation}
By introducing a redundant potential 
    \begin{equation}\label{eq:redundant}
    f_1 = -\sum_{i=2}^m \frac{a_{i}}{a_1}f_{i},
    \end{equation}
owing to the symmetry of $\D$ among all potentials, we shall, for notational convenience, write $\D(f)=\sum_{i=1}^m \int_{\X} a_i f_i^c\, \mathrm{d}\mu_i$, even though the input is an $(m-1)$-tuple of dual potentials. 

To study the signed barycenter problem, the first concern is whether the strong duality
\begin{equation}\label{eq:duality}
    \inf_{\nu\in \Pc(\Y)} \mathscr{B}(\nu)=\inf_{f_{[I_-]}}\sup_{f_{[I_+\setminus \{1\}]}}\D(f_{[I \setminus \{1\}]})
    \end{equation}
holds or not. Crucially at this stage, the maximization and minimization in the dual problem cannot be interchanged, as $\D$ are no longer separable. 

To study the minimax problem, it is ideal if the dual functional $\D$ in \eqref{eq:dual-func-po-f} has the convex-concave property. While it is straightforward to verify that $\D$ is concave w.r.t maximizing variables from \Cref{lem:c-tran-prop}, i.e., maximizing potentials associated with positive weights do not present any obstruction. However, $\D$ is in general not convex w.r.t minimizing variables, note that $f_i\mapsto -\abs{a_i}\int f_i^c \mathrm{d}\mu_i$ is convex whereas $f_i\mapsto a_1 \int (-\sum_{j=2}^{m}\frac{a_{j}}{a_1}f_{j})^c \mathrm{d}\mu_1$ is concave for any $i\in I_-$. To overcome this difficulty, we characterize stationary points and saddle points to the dual functional in \Cref{subsec:stationary}.

\begin{proposition}\label{prop:func-dual-concave}
$f_{[I_+\setminus \{1\}]}=(f_2,\ldots, f_{\ell})\mapsto \int_{\X} (-\sum_{i=2}^m \frac{a_{i}}{a_1}f_i)^c\, \mathrm{d}\mu_1$ is jointly concave.
\end{proposition}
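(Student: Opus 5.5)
The plan is to reduce the claim to part 3) of \Cref{lem:c-tran-prop}, the concavity of the $c$-transform $f\mapsto f^c$, and then use linearity and monotonicity of integration against $\mu_1$. The negative-weight potentials $f_{\ell+1},\ldots,f_m$ are held fixed throughout. Define the map
\[
A(f_2,\ldots,f_\ell) := -\sum_{i=2}^m \frac{a_i}{a_1} f_i ,
\]
which is affine in the maximizing variables $(f_2,\ldots,f_\ell)$: its linear part is $-\sum_{i=2}^{\ell}\frac{a_i}{a_1}f_i$, and its constant part $-\sum_{i=\ell+1}^m \frac{a_i}{a_1}f_i$ does not depend on these variables. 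Note that the signs of the coefficients $-a_i/a_1$ play no role, since only affinity is needed.

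Next, take two tuples $g=(g_2,\ldots,g_\ell)$ and $h=(h_2,\ldots,h_\ell)$ and $t\in[0,1]$. Affinity gives the pointwise identity on $\Y$
\[
A\big((1-t)g+th\big)=(1-t)A(g)+tA(h),
\]
because the coefficients $(1-t)+t=1$ reproduce the fixed constant part exactly. Applying part 3) of \Cref{lem:c-tran-prop} pointwise in $x\in\X$ then yields
\[
\big[A((1-t)g+th)\big]^c(x)\geqslant (1-t)\,[A(g)]^c(x)+t\,[A(h)]^c(x).
\]
That part is stated for arbitrary functions, and its proof is the one-line estimate $\inf_y\big(c-(1-t)F_1-tF_2\big)\geqslant (1-t)\inf_y(c-F_1)+t\inf_y(c-F_2)$ with $c=(1-t)c+tc$.

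Finally, integrate this inequality against the nonnegative measure $\mu_1$. This gives concavity of $(f_2,\ldots,f_\ell)\mapsto\int_\X A(f)^c\,\mathrm{d}\mu_1$, which is joint concavity because the tuple $g$ was arbitrary.

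The main obstacle is making sure the integrals are meaningful, that is, not of the form $-\infty+\infty$. I would restrict to potentials that are bounded continuous, or $c$-concave, in the admissible class. Part 5) of \Cref{lem:c-tran-prop} shows that the $c$-transforms are real valued. The growth Assumption \ref{asp:cost_growth}, together with $\mu_1\in\Pc(\X)$, bounds $A(f)^c$ above by $c(x,y')-A(f)(y')\leqslant (1+\varepsilon)c(x,y_0)+\text{const}$, which is $\mu_1$-integrable. Each integral therefore lies in $[-\infty,\infty)$, and concavity holds in this extended sense. If some value is $-\infty$, the inequality is trivial.
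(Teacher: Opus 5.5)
Your proposal is correct and matches the paper's proof. The paper likewise holds $f_{[I_-]}$ fixed and rewrites the argument of the $c$-transform at the convex combination as the same convex combination of the two arguments; the fixed negative-weight part is reproduced because the coefficients sum to one. It then applies part 3) of \Cref{lem:c-tran-prop} pointwise and integrates against $\mu_1$. Your added check that each integral lies in $[-\infty,\infty)$, via an integrable upper bound on $A(f)^c$ from $\mu_1\in\Pc(\X)$, is a detail the paper leaves implicit.
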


The second concern is how to recover the signed barycenter from the dual problem. Suppose $\bar{f}_{[I\setminus \{1\}]}$ are optimal (i.e., a saddle point) to the dual problem, for readers familiar with the classical barycenter problem, it is natural to conjecture that
    \begin{equation}\label{eq:bary-po-f}
        \bar{\nu} = \push{\left(T_{(-\sum_{i=2}^m \frac{a_{i}}{a_1}\bar{f}_{i})^c}\right)}{\mu_1}=\push{\left(T_{\bar{f}_1^c}\right)}{\mu_1},
    \end{equation}
This observation is validated in \Cref{thm:main-induced} in \Cref{subsec:global}. And we also discuss the uniqueness of the signed barycenter, from the perspective of the dual approach. 

If we only have a stationary point instead of a saddle point to the dual functional, we propose in \Cref{subsec:upgrade} a criteria to test if a stationary point is indeed a saddle point. This criteria is in a simple form for the quadratic cost $c(x,y)=\frac{1}{2}\abs{x-y}^2$.

\subsection{Characterization of stationary points}\label{subsec:stationary}

Given a point $\bar{f}_{[I\setminus \{1\}]}=(\bar{f}_{[I_+\setminus \{1\}]},\bar{f}_{[I_-]})$, we say it is a \textit{stationary point} to $\D$ if for any tuples of bounded continuous functions $(f_{[I_+\setminus \{1\}]}, f_{[I_-]})$, it holds
    \[
    \begin{aligned}
    &\lim_{\varepsilon\to 0}\dfrac{\D(\bar{f}_{[I_+\setminus \{1\}]}+\varepsilon f_{[I_+\setminus \{1\}]},\bar{f}_{[I_-]})-\D(\bar{f}_{[I_+\setminus \{1\}]},\bar{f}_{[I_-]})}{\varepsilon}\leqslant 0;\\
    &\lim_{\varepsilon\to 0}\dfrac{\D(\bar{f}_{[I_+\setminus \{1\}]},\bar{f}_{[I_-]}+\varepsilon f_{[I_-]})-\D(\bar{f}_{[I_+\setminus \{1\}]},\bar{f}_{[I_-]})}{\varepsilon}\geqslant 0.
    \end{aligned}
    \]
In addition, assuming all marginals are absolutely continuous, the stationary point can be readily characterized by the first variation from \Cref{lem:Gangbo} and \Cref{lem:OT-dual-optimality}.  The stationary point $\bar{f}_{[I\setminus \{1\}]}$ to $\D$ necessarily satisfies that for any $i\in I\setminus \{1\}$
    \begin{equation}\label{eq:stationary-by-pf-po}
    \push{\left(T_{\bar{f}^c_{i}}\right)}{\mu_i}=\push{\left(T_{(-\sum_{i=2}^m \frac{a_{i}}{a_1}\bar{f}_{i})^c}\right)}{\mu_1}.
    \end{equation}
\eqref{eq:stationary-by-pf-po} can be used to constructed the $\dot{\mathbb{H}}^1$-gradient of $\D$, which can be used in the framework originated from \cite{Jacobs2020BF}. An application on the classical Wasserstein barycenter problem can be found in \cite{kim2025sobolev}.

As a comparison, we say $\bar{f}_{[I\setminus \{1\}]}$ is a \textit{saddle point} to $\D$ if
    \[\sup_{f_{[I_+\setminus \{1\}]}}\inf_{f_{[I_-]}}\D(f)=\inf_{f_{[I_-]}}\sup_{f_{[I_+\setminus \{1\}]}}\D(f)=\D(\bar{f}_{[I_+\setminus \{1\}]},\bar{f}_{[I_-]}).\]
Any saddle point is a stationary point by definition. 

We consider the linearized functional of $\D$ at $\bar{f}_{[I]}$, defined by
    \begin{equation}\label{eq:linear-dual-func-po-f}
    \overline{\D}(f_{[I\setminus \{1\}]}; \bar{f})=\sum_{i\in I\setminus \{1\}} \int_{\X} a_i f_i^c\, \mathrm{d}\mu_i + \int_{\X} a_1  \bar{f}_{1}^c\, \mathrm{d}\mu_1 + a_1 \int_{\Y} (\bar{f}_1 - f_1)\, \mathrm{d}\bar{\nu}.
    \end{equation}
where $\bar{f}_1, f_1$ are redundant potentials as in \eqref{eq:redundant} and $\bar{\nu}=\push{\left(T_{\bar{f}^c_1}\right)}{\mu_1}$ as in \eqref{eq:bary-po-f}. Apparently, $\overline{\D}(\bar{f}_{[I\setminus \{1\}]};\bar{f})=\D(\bar{f}_{[I\setminus \{1\}]})$. Notably, $\overline{\D}(f;\bar{f})$ is convex w.r.t minimizing variables and concave w.r.t maximizing variables, thanks to \Cref{lem:OT-dual-optimality}. This facilitates us to characterize the stationary points to $\D$ via the characterization of the saddle points to $\overline{\D}$.

\begin{proposition}\label{prop:saddle}

$\bar{f}_{[I\setminus \{1\}]}$ is a saddle point to $\overline{\D}(f_{[I\setminus \{1\}]};\bar{f})$ if and only if for every $i\in I\setminus \{1\}$, there exists $\pi_{i}\in\Pi(\mu_{i},\bar{\nu})$  such that 
    \begin{equation}\label{eq:saddle-optimality}
        \pi_{i}(x_{i},y)(\bar{f}^c_{i}(x_{i})+ \bar{f}_{i}(y)-c(x_{i},y))=0.
    \end{equation}
Furthermore, for every $i\in I$
    \begin{equation}\label{eq:saddle-strong-duality}
    K_c(\mu_{i},\bar{\nu})=\int_{\X\times\Y} c(x_{i},y)\, \mathrm{d}\pi_{i}(x_{i},y)=\int_{\X} \bar{f}^c_{i}\, \mathrm{d}\mu_{i}+\int_{\Y} \bar{f}_{i}\, \mathrm{d}\bar{\nu}=D_c(\mu_i,\bar{\nu}).
    \end{equation}
    
\end{proposition}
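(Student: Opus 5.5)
The plan is to use the separability of $\overline{\D}(\cdot;\bar f)$. Since $f_1=-\sum_{i=2}^m \frac{a_i}{a_1}f_i$ is linear in the remaining potentials, we have $-a_1\int_{\Y} f_1\,\mathrm{d}\bar\nu=\sum_{i=2}^m a_i\int_{\Y} f_i\,\mathrm{d}\bar\nu$. Hence, up to terms independent of $f$,
\[
\overline{\D}(f_{[I\setminus\{1\}]};\bar f)=C(\bar f)+\sum_{i=2}^m a_i\Big(\int_{\X} f_i^c\,\mathrm{d}\mu_i+\int_{\Y} f_i\,\mathrm{d}\bar\nu\Big),\qquad C(\bar f)=a_1\int_{\X}\bar f_1^c\,\mathrm{d}\mu_1+a_1\int_{\Y}\bar f_1\,\mathrm{d}\bar\nu .
\]
Each summand is $a_i$ times the classical dual functional of \eqref{DP2} for the pair $(\mu_i,\bar\nu)$, and it depends on $f_i$ alone. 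Because the functional is separable, $\sup$ and $\inf$ commute. So $\bar f_{[I\setminus\{1\}]}$ is a saddle point exactly when, for every $i\in I_+\setminus\{1\}$, $\bar f_i$ maximizes $f_i\mapsto \int f_i^c\,\mathrm{d}\mu_i+\int f_i\,\mathrm{d}\bar\nu$, and, for every $i\in I_-$, $\bar f_i$ minimizes $a_i(\cdot)$. The latter again means $\bar f_i$ maximizes the classical dual, since $a_i<0$. I will verify this equivalence explicitly: if some coordinate failed to be optimal, perturbing only that coordinate would violate the saddle inequality; conversely, coordinatewise optimality gives both saddle inequalities at once.

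The saddle property is therefore equivalent to each $\bar f_i$ ($i\neq1$) being an optimal $c$-concave Kantorovich potential for $(\mu_i,\bar\nu)$. By \Cref{thm:Brenier} (strong duality $K_c=D_c$) together with the complementary slackness \eqref{eq:ot-optimality}, this is in turn equivalent to the existence of $\pi_i\in\Pi(\mu_i,\bar\nu)$ concentrated on $\{\bar f_i^c(x)+\bar f_i(y)=c(x,y)\}$. For the forward direction, take $\pi_i$ to be an optimal plan; it exists by compactness of $\Pi(\mu_i,\bar\nu)$ and finiteness of the cost. Then integrate $c-\bar f_i^c-\bar f_i\ge0$ against $\pi_i$; the integral is zero by strong duality, so \eqref{eq:saddle-optimality} holds. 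For the converse, integrating \eqref{eq:saddle-optimality} gives $\int c\,\mathrm{d}\pi_i=\int\bar f_i^c\,\mathrm{d}\mu_i+\int\bar f_i\,\mathrm{d}\bar\nu\le D_c\le K_c\le\int c\,\mathrm{d}\pi_i$. Equality throughout yields optimality of $\bar f_i$ as well as \eqref{eq:saddle-strong-duality} for $i\ne1$. Integrability is ensured by Assumption \ref{asp:cost_growth} and the bounds $c(x,y)\le (1+\varepsilon)c(x_0,y)+B_\varepsilon c(x,y_0)$, which let us split the integrals legitimately.

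It remains to treat $i=1$ in \eqref{eq:saddle-strong-duality}. Since $\bar\nu=\push{(T_{\bar f_1^c})}{\mu_1}$ with $\mu_1$ absolutely continuous, take $\pi_1=\push{(\id,T_{\bar f_1^c})}{\mu_1}$. By \Cref{lem:Gangbo}, $T_{\bar f_1^c}(x)$ is a minimizer of $y\mapsto c(x,y)-\bar f_1(y)$, so $\bar f_1^c(x)+\bar f_1(T(x))=c(x,T(x))$ holds $\mu_1$-a.e. \Cref{lem:Kan-examples} then gives $K_c(\mu_1,\bar\nu)=\int c(x,T(x))\,\mathrm{d}\mu_1=D_c$.

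I expect the main obstacle to be that $\bar f_1$ is a linear combination of $c$-concave functions and need not itself be $c$-concave. Handling it requires checking that $T_{\bar f_1^c}$ is well defined $\mu_1$-a.e. Here I would use that $\bar f_1^c$ is $c$-concave, hence locally Lipschitz and differentiable a.e., and then apply the twist condition. I would also replace $\bar f_1$ by $\bar f_1^{cc}\ge \bar f_1$, which agrees with $\bar f_1$ on the range of $T$, so that $\int\bar f_1\,\mathrm{d}\bar\nu=\int\bar f_1^{cc}\,\mathrm{d}\bar\nu$. A secondary technical point is the integrability needed for \Cref{lem:Kan-examples}, which I would again obtain from the growth assumption.
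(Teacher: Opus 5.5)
Your proposal is correct and follows essentially the same route as the paper: substitute the redundant potential so that $\overline{\D}(\cdot;\bar f)$ becomes a constant plus $\sum_{i\geq 2} a_i\big(\int_{\X} f_i^c\,\mathrm{d}\mu_i+\int_{\Y} f_i\,\mathrm{d}\bar\nu\big)$, optimize coordinatewise, and conclude with Kantorovich duality and complementary slackness from \Cref{thm:Brenier}. You are more careful than the paper's two-line proof in three places: you spell out both directions of the complementary-slackness equivalence; you read ``saddle point'' through the saddle inequalities, which is what lets separability force coordinatewise optimality; and you explicitly verify the $i=1$ case of \eqref{eq:saddle-strong-duality} via $\pi_1=\push{(\id,T_{\bar f_1^c})}{\mu_1}$, which the paper leaves implicit.
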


\begin{proposition}\label{prop:stationary}

$\bar{f}_{[I\setminus \{1\}]}$ is a stationary point to $\D(f)$ if and only if $\bar{f}_{[I\setminus \{1\}]}$ is a saddle point to $\overline{\D}(f_{[I\setminus \{1\}]};\bar{f})$.

\end{proposition}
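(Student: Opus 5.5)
The plan is to compare the first variations of $\D$ and of $\overline{\D}(\cdot;\bar f)$ at $\bar f_{[I\setminus\{1\}]}$, show that they coincide, and then use the convex--concave structure of $\overline{\D}$ to turn first-order conditions into saddle conditions. The two functionals differ only in the term involving $\mu_1$. In $\D$ this term is $a_1\int_{\X} f_1^c\,\mathrm{d}\mu_1$, with $f_1=-\sum_{i\ge2}\frac{a_i}{a_1}f_i$ the redundant potential from \eqref{eq:redundant}. In $\overline{\D}$ it is replaced by its linearization at $\bar f_1$, namely $a_1\int\bar f_1^c\,\mathrm{d}\mu_1+a_1\int(\bar f_1-f_1)\,\mathrm{d}\bar\nu$, with $\bar\nu=\push{(T_{\bar f_1^c})}{\mu_1}$. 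Since $\mu_1\in\Pc^{\textup{AC}}(\X)$, \Cref{lem:Gangbo} 2) and \Cref{lem:OT-dual-optimality} 2) give the derivative of $f_1\mapsto\int f_1^c\,\mathrm{d}\mu_1$ at $\bar f_1$ in direction $\phi_1$ as $-\int\phi_1\,\mathrm{d}\bar\nu$. This is exactly the derivative of the linear term. Dominated convergence applies because the difference quotients of $c$-transforms are bounded by $\|\phi_1\|_\infty$.

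Hence for every direction $\phi=(\phi_2,\ldots,\phi_m)$ of bounded continuous functions, with induced $\phi_1=-\sum_{i\ge2}\frac{a_i}{a_1}\phi_i$, the one-sided directional derivatives of $\D(\cdot)$ and $\overline{\D}(\cdot;\bar f)$ at $\bar f$ are equal. The remaining terms $a_i\int f_i^c\,\mathrm{d}\mu_i$ are literally the same in both functionals. Only one-sided derivatives are needed for the terms with $\mu_i$ not absolutely continuous, and these exist by concavity of $f\mapsto f^c$ (\Cref{lem:c-tran-prop} 3)). It follows that $\bar f$ is a stationary point of $\D$ if and only if it is a stationary point of $\overline{\D}(\cdot;\bar f)$, in the sense of the definition in \Cref{subsec:stationary}.

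It then remains to show that for $\overline{\D}(\cdot;\bar f)$, stationarity is equivalent to being a saddle point. One direction is by definition. For the other, observe that $\overline{\D}$ is separable: it is a sum of the terms $a_i\big(\int f_i^c\,\mathrm{d}\mu_i+\int f_i\,\mathrm{d}\bar\nu\big)$ plus a constant, using $-a_1 f_1=\sum_{i\ge2}a_if_i$. Each term is concave in $f_i$ for $i\in I_+$ and convex for $i\in I_-$. For a concave (respectively convex) function, a point at which all one-sided directional derivatives are $\leq0$ (respectively $\geq0$) is a global maximizer (respectively minimizer). This follows from the chord inequality $g(\bar f+\phi)-g(\bar f)\le g'(\bar f;\phi)$.

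By separability, each $\bar f_i$ then maximizes (for $i\in I_+$) or minimizes (for $i\in I_-$) $\int f^c\,\mathrm{d}\mu_i+\int f\,\mathrm{d}\bar\nu$ after dividing by $a_i$. In other words, $\bar f_i$ is optimal for \eqref{DP2} between $\mu_i$ and $\bar\nu$. Because the problem is separable, sup and inf commute, and $\bar f$ is a saddle point. This chain also links back to \Cref{prop:saddle}, which I would use only as a consistency check.

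The main obstacle is justifying the derivative of the $\mu_1$ term in the first paragraph. The potentials $\bar f_i$ live on an unbounded $\Y$ and are $c$-concave rather than bounded. The perturbed $c$-transform must attain its infimum at a point converging to $T_{\bar f_1^c}(x)$, for $\mu_1$-a.e.\ $x$ where $\bar f_1^c$ is differentiable. I would handle this using the coercivity in \Cref{lem:c-tran-prop} 6) together with the twist condition. A secondary point is that directional derivatives for non-absolutely-continuous $\mu_i$ are only one-sided; this is harmless, since the same terms appear in both $\D$ and $\overline{\D}$.
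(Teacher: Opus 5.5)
Your proposal is correct and follows the paper's proof. The paper likewise reduces the claim to showing that $\bar f$ is stationary for $\D$ if and only if it is stationary for $\overline{\D}(\cdot;\bar f)$, using $\delta J^c_{\bar f_1;\mu_1}=-\bar\nu$ from \Cref{lem:OT-dual-optimality} to see that the $\mu_1$ term and its linearization differ only by $o(\varepsilon)$ while all other terms are common; it then appeals to the convex--concave structure of $\overline{\D}(\cdot;\bar f)$, and your separability and chord-inequality argument just makes that last step explicit.
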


Combine \Cref{prop:saddle} and \Cref{prop:stationary}, the stationary point $\bar{f}_{[I\setminus \{1\}]}$ to $\D$ satisfies \eqref{eq:saddle-optimality} and \eqref{eq:saddle-strong-duality}. Consequently, $\D(\bar{f})=\B(\bar{\nu})$. However, it is still not sufficient to show the strong duality \eqref{eq:duality}, since we do not yet know if $\inf\sup \D(f)=\D(\bar{f})$.

\subsection{Properties of saddle points}\label{subsec:global}

In this section, we will show how to construct the signed barycenter, from the saddle point of the dual problem, thereby ensuring the strong duality $\inf \B(\nu)=\sup\inf \D(f)$. And we also propose sufficient conditions for the uniqueness of the signed barycenter.  

\begin{theorem}[Any saddle point induces a signed barycenter]\label{thm:main-induced} 

Suppose $\mu_1\in \Pc^{\textup{AC}}(\X)$ and $\bar{f}_{[I\setminus \{1\}]}$ is a $c$-concave saddle point to $\D(f)$. Then 
$\bar{\nu} = \push{T_{(-\sum_{i=2}^m \frac{a_{i}}{a_1}\bar{f}_{i})^c}}{\mu_1}$ defined in \eqref{eq:bary-po-f} is a signed barycenter to \eqref{eq:bary-func}, and the strong duality \eqref{eq:duality} holds. Furthermore, for any marginal $\mu_i\in \Pc^{\textrm{AC}}(\X)$, $\bar{\nu}=\push{T_{\bar{f}_{i}^c}}{\mu_{i}}$.

\end{theorem}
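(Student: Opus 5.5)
We argue by a weak-duality chain closed at the saddle point. Every saddle point is a stationary point, so combining \Cref{prop:stationary} and \Cref{prop:saddle} gives, for every $i\in I\setminus\{1\}$, a plan $\pi_i\in\Pi(\mu_i,\bar\nu)$ satisfying \eqref{eq:saddle-optimality}. We also get the identity \eqref{eq:saddle-strong-duality} for every $i\in I$; for $i=1$ it follows from \Cref{thm:Brenier} and \Cref{lem:Kan-examples}, since $\bar\nu$ is the pushforward of $\mu_1$ under $T_{\bar f_1^c}$. Summing \eqref{eq:saddle-strong-duality} with weights $a_i$ and using $\sum_i a_i\bar f_i\equiv 0$, which is just the redundant-potential relation \eqref{eq:redundant}, we obtain
\[
\B(\bar\nu)=\sum_{i=1}^m a_i\int_\X \bar f_i^c\,\mathrm{d}\mu_i=\D(\bar f_{[I\setminus\{1\}]}).
\]

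It remains to show $\B(\bar\nu)\le\B(\nu)$ for every $\nu\in\Pc(\Y)$. Fix $\nu$ and take the minimizing variables equal to $\bar f_{[I_-]}$. For the positive indices $i\in I_+\setminus\{1\}$, let $f_i^\nu$ be optimal Kantorovich potentials for $\nu$ to $\mu_i$. For $i\in I_-$, Kantorovich duality (the inf formula for negative weights) gives
\[
a_iK_c(\mu_i,\nu)\ge a_i\Big(\int\bar f_i^c\,\mathrm{d}\mu_i+\int\bar f_i\,\mathrm{d}\nu\Big).
\]
For $i\in I_+\setminus\{1\}$ we have equality with $f_i^\nu$. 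For $i=1$, \Cref{lem:dual} with $\phi=\sum_{i\ge2}a_if_i/a_1$ gives
\[
a_1K_c(\mu_1,\nu)+\sum_{i\ge 2}a_i\int f_i\,\mathrm{d}\nu\ \ge\ a_1\int\Big(-\sum_{i\ge2}\tfrac{a_i}{a_1}f_i\Big)^c\mathrm{d}\mu_1 .
\]
Adding these yields
\[
\B(\nu)\ \ge\ \D\big(f^\nu_{[I_+\setminus\{1\}]},\bar f_{[I_-]}\big).
\]
The saddle property then gives
\[
\D\big(f^\nu_{[I_+\setminus\{1\}]},\bar f_{[I_-]}\big)\ \ge\ \inf_{f_{[I_+\setminus\{1\}]}}\D\big(f_{[I_+\setminus\{1\}]},\bar f_{[I_-]}\big).
\]
This is the wrong direction, so the bound must be arranged the other way. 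The saddle value satisfies $\D(\bar f)=\sup_{f_+}\D(f_+,\bar f_-)$, since $\bar f_-$ attains the outer infimum and the inner supremum is attained at $\bar f_+$. Hence $\D(f_+^\nu,\bar f_-)\le\D(\bar f)$, which is also the wrong direction.

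The fix is to dualize $\nu$ against the fixed $\bar f_-$ and maximize over $f_+$ only after taking the infimum over $\nu$, which is exactly the derivation of \eqref{eq:dual-func-po-f}. Concretely, for any fixed $f_+$,
\[
\B(\nu)\ \ge\ \sum_{i\ge2}\int a_if_i^c\,\mathrm{d}\mu_i+\int\sum_{i\ge2}a_if_i\,\mathrm{d}\nu+a_1K_c(\mu_1,\nu)\qquad\text{with }f_-=\bar f_-,
\]
and by \Cref{lem:dual} the right-hand side is $\ge\D(f_+,\bar f_-)$. Taking $f_+=\bar f_+$ gives $\B(\nu)\ge\D(\bar f)=\B(\bar\nu)$. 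This establishes minimality of $\bar\nu$ and strong duality \eqref{eq:duality}, because $\D(\bar f)$ equals the inf--sup value by the saddle definition.

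The main obstacle is the last claim: for absolutely continuous $\mu_i$, show $\bar\nu=\push{T_{\bar f_i^c}}{\mu_i}$. The plan is to use \eqref{eq:saddle-optimality} for $i\ge 2$. Since $\pi_i$ is concentrated on the contact set of the $c$-concave pair $(\bar f_i^c,\bar f_i)$, and $\bar f_i^c$ is locally Lipschitz and hence differentiable $\mu_i$-a.e., \Cref{lem:Gangbo} and the twist condition force $y=T_{\bar f_i^c}(x)$ on $\operatorname{spt}\pi_i$ $\mu_i$-a.e. Therefore $\pi_i=\push{(\id,T_{\bar f_i^c})}{\mu_i}$, and its second marginal is $\bar\nu$. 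Two points need care: integrability for \Cref{lem:dual} and the duality, handled by the $\Pc$ assumptions and Assumption \ref{asp:cost_growth}; and ensuring that the $c$-concave $\bar f_i$ are bounded enough for \Cref{lem:dual}, which I would handle by truncation or approximation if needed.
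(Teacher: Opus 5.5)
Your identity $\B(\bar\nu)=\D(\bar f)$, obtained from \eqref{eq:saddle-strong-duality} and $\sum_i a_i\bar f_i=0$, is correct and is what the paper uses. So is your contact-set argument for $\bar\nu=\push{T_{\bar f_i^c}}{\mu_i}$. The gap is in the minimality step $\B(\nu)\geqslant\B(\bar\nu)$, and it is a sign error on the negative-weight terms. For $i\in I_-$, weak duality gives $K_c(\mu_i,\nu)\geqslant\int_\X\bar f_i^c\,\mathrm{d}\mu_i+\int_\Y\bar f_i\,\mathrm{d}\nu$. Since $a_i<0$, this yields
\[
a_iK_c(\mu_i,\nu)\leqslant a_i\Big(\int_\X\bar f_i^c\,\mathrm{d}\mu_i+\int_\Y\bar f_i\,\mathrm{d}\nu\Big),
\]
which is the reverse of the inequality you wrote. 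An infimum formula only bounds the value from above at a fixed competitor. Consequently neither of your attempts is justified, including the \emph{fix} in which you bound $\B(\nu)$ from below with $f_{[I_-]}=\bar f_{[I_-]}$ and conclude $\B(\nu)\geqslant\D(f_{[I_+\setminus\{1\}]},\bar f_{[I_-]})$. Freezing the minimizing potentials at $\bar f_{[I_-]}$ independently of $\nu$ would require the reverse of weak duality. That holds only when $\bar f_i$ happens to be an optimal potential for $(\mu_i,\nu)$.

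The roles of the two groups must be swapped. For each $\nu$ and each $i\in I_-$, take $f^\nu_i$ optimal for $(\mu_i,\nu)$, so those terms become equalities. For $i\in I_+\setminus\{1\}$, arbitrary potentials $f_i$ give a valid lower bound. \Cref{lem:dual} then gives, for every $f_{[I_+\setminus\{1\}]}$,
\[
\B(\nu)\geqslant\D\big(f_{[I_+\setminus\{1\}]},f^\nu_{[I_-]}\big)\geqslant\inf_{f_{[I_-]}}\D\big(f_{[I_+\setminus\{1\}]},f_{[I_-]}\big).
\]
Taking the supremum over $f_{[I_+\setminus\{1\}]}$ yields $\B(\nu)\geqslant\sup\inf\D=\D(\bar f)=\B(\bar\nu)$. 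This is exactly the paper's chain $\inf_\nu\B(\nu)=\inf_\nu\inf_{f_{[I_-]}}\sup_{f_{[I_+\setminus\{1\}]}}(\cdots)\geqslant\sup_{f_{[I_+\setminus\{1\}]}}\inf_{f_{[I_-]}}\D(f)=\D(\bar f)$. The infimum over the minimizing potentials has to be taken for each $\nu$ before anything is frozen, and the saddle property identifies the sup--inf value with $\D(\bar f)$. With this repair, your identity $\D(\bar f)=\B(\bar\nu)$ closes the argument and gives \eqref{eq:duality}.
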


\begin{proof}
Apply \Cref{lem:dual},
    \[
    \begin{aligned}
    \mathscr{B}(\bar{\nu})&\geqslant \inf_{\nu}\mathscr{B}(\nu)=\inf_{\nu}\inf_{f_{[I_-]}}\sup_{f_{[I_+ \setminus \{1\}]}} a_1 K_c(\mu_1,\nu) + \sum_{i=2}^m a_{i} \int_{\X} f_{i}^c \,\mathrm{d}\mu_i + \int_{\Y}(\sum_{i=2}^m a_{i}f_{i})\,\mathrm{d}\nu\\
    &\geqslant \sup_{f_{[I_+ \setminus \{1\}]}}\inf_{f_{[I_-]}}\D(f)=\D(\bar{f}_{[I_+\setminus \{1\}]},\bar{f}_{[I_-]})=\sum_{i=2}^m \int_{\X} a_{i} \bar{f}^c_{i}\, \mathrm{d}\mu_{i} +  \int_{\X} a_1 \bar{f}_1^c \, \mathrm{d}\mu_1.
    \end{aligned}
    \]

The saddle point $\bar{f}_{[I\setminus \{1\}]}$ is also a stationary point, thanks to the characterization of stationary points to $\D(f)$ in \Cref{subsec:stationary}, from \eqref{eq:saddle-strong-duality} we have
    \[
    \begin{aligned}
    \int_{\X\times \Y} c(x_{i},y)\, \mathrm{d}\pi_{i}(x_{i},y)&=\int_{\X} \bar{f}^c_{i}\, \mathrm{d}\mu_{i}+\int_{\Y} \bar{f}_{i}\, \mathrm{d}\bar{\nu};\\
    \int_{\X\times \Y} c(x_{1},y)\, \mathrm{d}\pi_{1}(x_{1},y)&=\int_{\X} \bar{f}^c_{1}\, \mathrm{d}\mu_{1}+\int_{\Y} \bar{f}_{1}\, \mathrm{d}\bar{\nu}.
    \end{aligned}\]
where $\pi_{i}\in \Pi(\mu_i,\bar{\nu})$ is optimal. Plug in these to continue the above inequality,
    \[
    \begin{aligned}
        \mathscr{B}(\bar{\nu})&\geqslant \sum_{i=2}^m \int_{\X} a_{i} \bar{f}^c_{i}\, \mathrm{d}\mu_{i} + \int_{\X} a_1 \bar{f}_1^c \, \mathrm{d}\mu_1 + a_1 [\int_{\X\times \Y} c(x_1,y)\, \mathrm{d}\pi_1(x_1,y) - \int_{\X} \bar{f}^c_1 \mathrm{d}\mu_1 - \int_{\Y} \bar{f}_1 \mathrm{d}\bar{\nu} ]\\
        &=\int_{\X\times \Y} a_1 c(x_1,y)\, \mathrm{d}\pi_1(x_1,y) + \sum_{i=2}^m \int_{\X} a_{i} \bar{f}^c_{i}\, \mathrm{d}\mu_{i} -a_1 \int_{\Y} (-\sum_{i=2}^m \frac{a_{i}}{a_1}\bar{f}_{i})\, \mathrm{d}\bar{\nu}\\
        &=\int_{\X\times \Y} a_1 c(x_1,y)\, \mathrm{d}\pi_1(x_1,y)+\sum_{i=2}^m \int_{\X\times \Y} a_{i} c(x_{i},y)\, \mathrm{d}\pi_{i}(x_{i},y)\\
        &= \sum_{i=1}^m a_{i}K_c(\mu_{i},\bar{\nu})=\mathscr{B}(\bar{\nu}),
        \end{aligned}
        \]
which completes the strong duality, i.e., $\inf \B(v)=\sup\inf \D(f)$.
\end{proof}

Given a saddle point $\bar{f}_{[I\setminus \{1\}]}$, the follow theorem states that for  $i\in I_+\setminus \{1\}$, the OT plan between \textit{any} signed barycenter and $\mu_i$ belongs to the $c$-superdifferential of $\bar{f}_i$. This is reminiscent of a related property in the classical theory of optimal transport \citep[Remark 1.15]{Ambrosio2013User}. We emphasize that this property holds only for potentials corresponding to a \textit{positive weight}. With additional absolutely continuous marginal associated with a \textit{positive weight}, we obtain the uniqueness of the signed barycenter from the perspective of dual approach, comparing with the uniqueness result from \Cref{thm:convex-unique}.

\begin{theorem}[Uniqueness of the signed barycenter]\label{thm:unique}
Suppose $\mu_1\in \Pc^{\textup{AC}}(\X)$ and $\bar{f}_{[I\setminus \{1\}]}$ is a $c$-concave saddle point to $\D(f)$. Any signed barycenter $\hat{\nu}$ (including $\bar{\nu}$ induced from $\bar{f}_{[I\setminus \{1\}]}$) to $\B$ necessarily satisfies that for $i\in I_+\setminus \{1\}$, there exists $\pi_i\in \Pi(\mu_i,\hat{\nu})$ such that $\pi_i(x_i,y)(c(x_i,y)-\bar{f}_i^c(x_i)-\bar{f}_i(y))=0$.

If there is $\mu_i\in \Pc^{\textup{AC}}(\X)$ for some $i\in I_+\setminus \{1\}$. Then the signed barycenter is unique.
\end{theorem}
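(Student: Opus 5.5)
The plan is to compare an arbitrary signed barycenter $\hat{\nu}$ with the saddle value $\D(\bar f)$ through a chain of inequalities that must all be equalities. Fix $\hat\nu$ and, for each $i$, an optimal plan $\pi_i\in\Pi(\mu_i,\hat\nu)$. By \Cref{thm:main-induced}, $\B(\hat\nu)=\inf\B=\D(\bar f_{[I_+\setminus\{1\}]},\bar f_{[I_-]})$. I will use the saddle property in the form $\D(\bar f_{[I_+\setminus\{1\}]},\bar f_{[I_-]})=\inf_{f_{[I_-]}}\D(\bar f_{[I_+\setminus\{1\}]},f_{[I_-]})$. In that infimum I plug in $f_{[I_-]}$ equal to optimal $c$-concave potentials $\hat f_i$ for $(\mu_i,\hat\nu)$, $i\in I_-$, with the positive variables frozen at $\bar f_i$.

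Next, I bound $a_1$ times the redundant term from above. With $f_1=-\sum_{i\ge2}\frac{a_i}{a_1}f_i$, \Cref{lem:dual} gives
\[
a_1\int_\X f_1^c\,\mathrm{d}\mu_1\le a_1K_c(\mu_1,\hat\nu)-a_1\int_\Y f_1\,\mathrm{d}\hat\nu
= a_1K_c(\mu_1,\hat\nu)+\sum_{i\ge2}a_i\int_\Y f_i\,\mathrm{d}\hat\nu .
\]
This yields
\[
\B(\hat\nu)\le \sum_{i\in I_+\setminus\{1\}}a_i\Big(\int \bar f_i^c\,\mathrm{d}\mu_i+\int\bar f_i\,\mathrm{d}\hat\nu\Big)+\sum_{i\in I_-}a_i\Big(\int\hat f_i^c\,\mathrm{d}\mu_i+\int\hat f_i\,\mathrm{d}\hat\nu\Big)+a_1K_c(\mu_1,\hat\nu).
\]
The $I_-$ brackets equal $K_c(\mu_i,\hat\nu)$ by optimality. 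For $i\in I_+\setminus\{1\}$, each bracket equals $\int(\bar f_i^c(x)+\bar f_i(y))\,\mathrm{d}\pi_i\le\int c\,\mathrm{d}\pi_i=K_c(\mu_i,\hat\nu)$. Because $a_i>0$, the right side is at most $\B(\hat\nu)$. Hence all these inequalities are equalities, and since $a_i>0$ and $c-\bar f_i^c-\bar f_i\ge0$, the integrand $c-\bar f_i^c-\bar f_i$ vanishes $\pi_i$-a.e. This is exactly the support claim. It also explains why the argument only works for positive weights: for negative $a_i$ the inequality would point the wrong way.

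For uniqueness, suppose $\mu_i\in\Pc^{\mathrm{AC}}(\X)$ for some $i\in I_+\setminus\{1\}$. Then $\pi_i$ is concentrated on the $c$-superdifferential of $\bar f_i$. Since $\bar f_i^c$ is locally Lipschitz, it is differentiable $\mu_i$-a.e., and by the twist condition (\Cref{lem:Gangbo}) that set is the graph of $T_{\bar f_i^c}$ $\mu_i$-a.e. So $\pi_i=(\id,T_{\bar f_i^c})_\#\mu_i$ and $\hat\nu=\push{T_{\bar f_i^c}}{\mu_i}$, which is independent of $\hat\nu$. Every signed barycenter is therefore this single measure, and it coincides with $\bar\nu$ by \Cref{thm:main-induced}.

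The main obstacle is the integrability needed to write $\int(\bar f_i^c+\bar f_i)\,\mathrm{d}\pi_i$ as a split of finite integrals against $\mu_i$ and $\hat\nu$. The same issue arises in applying \Cref{lem:dual}, which is stated for bounded continuous $\phi$, to possibly unbounded $c$-concave potentials. I would handle both by approximating with truncated potentials, or by invoking the growth Assumption \ref{asp:cost_growth} to control $\bar f_i$ by $c(x_0,\cdot)$ plus constants, so that everything lies in $L^1$.
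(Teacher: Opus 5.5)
Your proposal is correct and follows essentially the same argument as the paper. You use the same sandwich between $\B(\hat\nu)$ and the saddle value $\D(\bar f)$: the saddle inequality evaluated at the $I_-$ potentials $\hat f_i$ of $\hat\nu$, the \Cref{lem:dual} bound on the $\mu_1$ term, and dual suboptimality of $\bar f_i$ for $i\in I_+\setminus\{1\}$. You traverse the chain upward where the paper goes downward, and you reach uniqueness by the same identification $\hat\nu=\push{T_{\bar f_i^c}}{\mu_i}$. Your direct deduction that $c-\bar f_i^c-\bar f_i$ vanishes $\pi_i$-a.e.\ from the forced equalities is a slightly cleaner path to the support claim than the paper's appeal to \Cref{thm:Brenier}, and the paper also leaves implicit the integrability caveat you flag.
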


\begin{proof}
Let $\bar{\nu}=\push{T_{(-\sum_{i=2}^m \frac{a_i}{a_1}\bar{f}_i)^c}}{\mu_1}$ be the signed barycenter induced from the saddle point $\bar{f}_{[I\setminus \{1\}]}$ and $\hat{\nu}$ be another signed barycenter such that $\B(\hat{\nu})=\B(\bar{\nu})$.  

For $i\in I\setminus \{1\}$, let $\hat{f}_{i}$ be the Kantorovich potential such that $K_c(\mu_i, \hat{\nu})=\int_{\X} \hat{f}_i^c\, \mathrm{d}\mu_i + \int_{\Y}\hat{f}_i\, \mathrm{d}\hat{\nu}=\sup_{f}\int_{\X} f^c\, \mathrm{d}\mu_i + \int_{\Y}f \, \mathrm{d}\hat{\nu}$. Then
    \begin{align*}
    \B(\hat{\nu})&=\sum_{i=2}^m a_i [\int_{\X} \hat{f}_i^c\, \mathrm{d}\mu_i + \int_{\Y}\hat{f}_i \,\mathrm{d}\hat{\nu}] + a_1 K_c(\mu_1,\hat{\nu})\\
    &\geqslant \sum_{i\in I_+\setminus \{1\}} a_i [\int_{\X}\bar{f}_i^c\, \mathrm{d}\mu_i + \int_{\Y}\bar{f}_i\, \mathrm{d}\hat{\nu}] + \sum_{i\in I_-} a_i [\int_{\X}\hat{f}_i^c\, \mathrm{d}\mu_i + \int_{\Y}\hat{f}_i\, \mathrm{d}\hat{\nu}]+ a_1 K_c(\mu_1,\hat{\nu})\\
    &\geqslant \inf_{\nu} \sum_{i\in I_+\setminus \{1\}} a_i [\int_{\X}\bar{f}_i^c\, \mathrm{d}\mu_i + \int_{\Y}\bar{f}_i\, \mathrm{d}\nu] + \sum_{i\in I_-} a_i [\int_{\X}\hat{f}_i^c \,\mathrm{d}\mu_i + \int_{\Y}\hat{f}_i\, \mathrm{d}\nu]+ a_1 K_c(\mu_1,\nu)\\
    &= \D(\bar{f}_{[I_+\setminus \{1\}]}, \hat{f}_{[I_-]})\geqslant \inf_{f_{[I_-]}}\D(\bar{f}_{[I_+\setminus \{1\}]}, f_{[I_-]})=\D(\bar{f}_{[I_+\setminus\{1\}]}, \bar{f}_{[I_-]})=\B(\bar{\nu}),
    \end{align*}
which implies that all above inequalities are equalities. 

As a result, for $i\in I_+\setminus\{1\}$, $\bar{f}_{i}$ is the Kantorovich potential to $K_c(\mu_i,\hat{\nu})$. Thanks to the optimality condition in \Cref{thm:Brenier}, there exists $\pi_i\in \Pi(\mu_i,\hat{\nu})$ such that $
\pi_i(x_i,y)(c(x_i,y)-\bar{f}^c_i(x_i)-\bar{f}_i(y))=0$. Note that $\bar{\nu}$ satisfies a stronger condition as this equation holds for any $i\in I\setminus \{1\}$.

If, in addition, there exists an absolutely continuous marginal $\mu_i$ associated with a positive weight, denoted by $\mu_i\in \Pc^{\textup{AC}}(\X)$. Then the above property can be rephrased in terms of OT maps. Namely, $T_{\bar{f}^c_i}$ is the optimal transport map from $\mu_i$ to $\hat{\nu}$. On the other hand, thanks to \Cref{thm:main-induced}, $T_{\bar{f}^c_i}$ is the optimal transport map from $\mu_2$ to $\bar{\nu}$. Thus $\hat{\nu}=\bar{\nu}$.

\end{proof}

\subsection{A criteria for upgrading stationary points to saddle points}\label{subsec:upgrade}

In practice, it is often more tractable to obtain a stationary point. For instance, via \eqref{eq:stationary-by-pf-po} when all input measures are absolutely continuous. We are therefore interested in determining whether this stationary point is a saddle point, in which case it possesses the properties discussed in \Cref{subsec:global} and induce the signed barycenter.

The objective value of the dual problem \eqref{eq:dual-func-po-f} is the same whether $f_{[I\setminus \{1\}]}$ are $c$-concave or merely bounded continuous, since replacing each $f_i$ by $f_i^{cc}$ increases the objective value when $f_i$ is a maximizing variable and decrease it when $f_i$ is a minimizing variable. Thus we may consider another dual functional
    \[
    D(f_{[I_+\setminus \{1\}]}, g_{[I_-]})=\sum_{i=2}^{\ell}\int_{\X}a_i f_i^c\,\mathrm{d}\mu_i + \sum_{i=\ell+1}^{m}\int_{\X} a_i g_i\,\mathrm{d}\mu_i + a_1 \int_{\X} (-\sum_{i=2}^{\ell}\frac{a_i}{a_1}f_i + \sum_{i=\ell+1}^{m}\frac{\abs{a_i}}{a_1} g_i^c )^c \,\mathrm{d}\mu_1 ,
    \]
which is essentially the composition of \eqref{eq:dual-func-po-f} 
    \[
    \D(f_{[I_+\setminus \{1\}]},f_{{I_-}})=\sum_{i=2}^{\ell} \int_{\X}a_i f_i^c\, \mathrm{d}\mu_i + \sum_{i=\ell+1}^m \int_{\X} a_i f_i^c\, \mathrm{d}\mu_i + a_1 \int_{\X} (-\sum_{i=2}^{\ell}\frac{a_i}{a_1}f_i + \sum_{i=\ell+1}^m \frac{\abs{a_i}}{a_1}f_i)^c \mathrm{d}\mu_1,
    \]
with the change of variables $g_i=f_i^c$. Notably,
    \[\sup_{f_{[I_+\setminus \{1\}]}}\inf_{f_{[I_-]}}\D(f_{[I_+\setminus \{1\}]},f_{[I_-]})=\sup_{f_{[I_+\setminus \{1\}]}} \inf_{g_{[I_-]}}D(f_{[I_+\setminus \{1\}]},g_{[I_-]}),\]
regardless the optimizations are among $c$-concave or bounded continuous functions.

Given a $c$-concave stationary point $\bar{f}_{[I\setminus \{1\}]}$, to show it is a saddle point, since $\D$ is concave w.r.t the maximizing variables, it suffices to check that
\[\D(\bar{f}_{[I_+\setminus \{1\}]}, \bar{f}_{[I_-]})=\inf_{f_{[I_-]}} \D(\bar{f}_{[I_+\setminus \{1\}]}, f_{[I_-]})\]
holds. To establish this, we define $\bar{g}_i=\bar{f}_i^c$ for $i\in I_-$ and show the following holds
\begin{itemize}
    \item $g_{[I_-]}\mapsto D(\bar{f}_{[I_+\setminus \{1\}]}, g_{[I_-]})$ is convex along curves in $(C_b(\X))^{m-\ell}$.
    \item $\D(\bar{f}_{[I_+\setminus \{1\}]}, \bar{f}_{[I_-]}) = D(\bar{f}_{[I_+\setminus \{1\}]}, \bar{g}_{[I_-]})= \inf_{g_{[I_-]}}D(\bar{f}_{[I_+\setminus \{1\}]}, g_{[I_-]})$.
\end{itemize}
under adequate conditions.

Now, we pick those maximizing variables in the stationary point and consider $h:\Y\times (\X\times\X^{|I_-|})\to\R$ defined by
    \begin{equation}\label{eq:htest}
h(y;x_1, \mathbf{x}^-):=a_1c(x_1,y)-\sum_{i\in I_-} |a_i|c(x_i,y)+\sum_{i\in I_+\setminus\{1\}} a_i \bar{f}_i(y).
    \end{equation}

\begin{proposition}\label{prop:cond_convex}
Let $\mu_1\in \Pc^{\textup{AC}}(\X)$ and $\bar{f}_{[I\setminus \{1\}]}$ be a $c$-concave stationary point of $\D$. If $h$ is uniformly bounded from below and $h(y;x_1,\mathbf{x}^-)$ defined in \eqref{eq:htest} is convex for every $(x_1, \mathbf{x}^-)\in \X\times\mathcal{X}^{|I_-|}$, then $D(\bar{f}_{I_+\setminus \{1\}}, g_{[I_-]})$ is convex along curves in the set of bounded functions, i.e., for any bounded continuous functions $g_{[I_-]}$, define $g_{i,t}:=t g_i+(1-t)\bar{f}_i^c$ for $i\in I_-$, and
\[
t\mapsto D(\bar{f}_2,\ldots, \bar{f}_{\ell}, g_{\ell+1,t},\ldots, g_{m,t})
\]
is convex.
\end{proposition}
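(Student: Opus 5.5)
The plan is to write $D(\bar f_{[I_+\setminus\{1\}]}, g_{[I_-]})$ as a sum of terms that are constant or affine in $t$, plus one term that is a supremum of affine functions of $t$. Along the curve $g_{i,t}=tg_i+(1-t)\bar f_i^c$, the terms $\sum_{i=2}^{\ell}\int a_i\bar f_i^c\,\mathrm{d}\mu_i$ do not depend on $t$. The terms $\sum_{i\in I_-}a_i\int g_{i,t}\,\mathrm{d}\mu_i$ are affine in $t$. Since $a_1>0$, it therefore suffices to show that for each fixed $x_1\in\X$ the map
\[
t\mapsto a_1\Big(-\sum_{i=2}^{\ell}\frac{a_i}{a_1}\bar f_i+\sum_{i\in I_-}\frac{|a_i|}{a_1}g_{i,t}^c\Big)^c(x_1)
\]
is convex, and then integrate against $\mu_1$.

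To expand this quantity, I will use $-|a_i|g^c(y)=\sup_{x_i}|a_i|\big(g(x_i)-c(x_i,y)\big)$. Together with the definition \eqref{eq:htest}, this gives
\[
a_1\Big(\cdots\Big)^c(x_1)=\inf_{y\in\Y}\ \sup_{\mathbf{x}^-\in\X^{|I_-|}}\Big[h(y;x_1,\mathbf{x}^-)+\sum_{i\in I_-}|a_i|\,g_{i,t}(x_i)\Big].
\]
The bracket is affine in $t$ and, by hypothesis, convex in $y$. Because of the infimum over $y$, this alone does not give convexity in $t$. The key step is to exchange $\inf_y$ and $\sup$.

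To make the exchange possible, I will first relax the supremum over points $\mathbf{x}^-$ to a supremum over probability measures $\sigma$ on $\X^{|I_-|}$; this does not change its value. The resulting function
\[
L_t(y,\sigma)=\int h(y;x_1,\mathbf{x}^-)\,\mathrm{d}\sigma(\mathbf{x}^-)+\sum_{i\in I_-}|a_i|\int g_{i,t}(x_i)\,\mathrm{d}\sigma(\mathbf{x}^-)
\]
is convex in $y$, being an integral of convex functions, and linear (hence concave) in $\sigma$. I would then invoke Sion's minimax theorem to obtain
\[
\inf_y\sup_\sigma L_t=\sup_\sigma\inf_y L_t.
\]
After the swap the argument closes quickly. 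The $g$-term does not depend on $y$, so
\[
\inf_y L_t(y,\sigma)=\inf_y\int h\,\mathrm{d}\sigma+\sum_{i\in I_-}|a_i|\int g_{i,t}\,\mathrm{d}\sigma,
\]
which is finite because $h$ is uniformly bounded below and the $g_i$, $\bar f_i^c$ are bounded. This expression is affine in $t$. A supremum over $\sigma$ of affine functions of $t$ is convex, and integrating in $x_1$ against $\mu_1$ with the positive weight $a_1$ preserves convexity.

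The main obstacle is justifying the minimax exchange, since Sion's theorem needs compactness on one side. The space of probability measures on $\X^{|I_-|}$ is not compact, and neither is $\Y$. My first attempt is to restrict $y$ to compact convex sets $K_R\subset\Y$, such as closed balls, which is implicitly possible since convexity of $h$ in $y$ presupposes a convex domain. I would use lower semicontinuity of $L_t(\cdot,\sigma)$ in $y$, which follows from continuity of $c$ and $\bar f_i$ and Fatou's lemma with the lower bound on $h$. Sion on $K_R\times\P(\X^{|I_-|})$ then gives convexity of $t\mapsto\inf_{y\in K_R}\sup_{\mathbf{x}^-}[\cdots]$ for each $R$. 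The infimum over $\Y$ is the decreasing limit as $R\to\infty$, and a pointwise limit of convex functions is convex, which yields the claim. A final check is that $x_1\mapsto$ this quantity is measurable and $\mu_1$-integrable, so that the integral is finite. Here I would use the boundedness of the $g$'s and the lower bound on $h$, together with the upper bound obtained by plugging a fixed $y$.
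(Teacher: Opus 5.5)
Your proposal is correct, and it reaches the conclusion by a different and cleaner route than the paper.

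\textbf{The paper's route.} The paper works with the integrated functional. It writes the $\mu_1$-term as $\inf_{Y}\sup_{\gamma\in\Pi(\mu_1,\cdot)}$, with $Y$ ranging over maps $Y:\X\to K$ and $\gamma$ over couplings. It applies Sion's theorem using weak compactness of $\{Y:\X\to K\}$ in $L^1(\mu_1)$. It then replaces the $h$-part by $\int H_K\,\mathrm{d}\gamma$, where $H_K=\inf_{y\in K}h$, and runs a chain of inequalities that compares the value at $t$ with the values at the endpoints $t=0,1$. Finally it lets $K$ exhaust $\Y$.

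\textbf{Your route.} You do the exchange pointwise in $x_1$, over $y\in K_R\subset\R^d$ and mixed strategies $\sigma$ on $\X^{|I_-|}$. This needs only finite-dimensional compactness, at the cost of the measurability and integrability check in $x_1$ that you already flag. Since $L_t$ is affine in $\sigma$, the Kneser--Fan form of the minimax theorem applies with no topology needed on the $\sigma$ side. You then conclude by noting that, after the swap, $\inf_y\int h\,\mathrm{d}\sigma$ does not depend on $t$. The value is therefore a supremum of affine functions of $t$, hence convex.

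\textbf{Why this matters.} Keeping $\inf_y\int h\,\mathrm{d}\sigma$ intact is more than cosmetic. Because $Y$ may depend on $x_1$ only, one has in general
\[
\inf_Y\int h(Y(x_1);x_1,\mathbf{x}^-)\,\mathrm{d}\gamma\geqslant\int H_K\,\mathrm{d}\gamma .
\]
So the paper's bound right after the exchange, namely the $\mu_1$-term $\leqslant\sup_\gamma\int\big(H_K+\sum_{i\in I_-}|a_i|g_{i,t}\big)\,\mathrm{d}\gamma$, does not follow from Sion's theorem. It is equivalent to a pure-strategy identity $\inf_{y}\sup_{\mathbf{x}^-}=\sup_{\mathbf{x}^-}\inf_{y}$ for $\mu_1$-a.e.\ $x_1$, and convexity in $y$ alone does not supply that. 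Your argument avoids this step entirely.

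\textbf{What each route buys.} The paper's formulation introduces the function $H$, which is reused in the proof of Theorem~\ref{thm:saddle_pt_sufficient}. Your argument is the one that closes cleanly for this proposition.
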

\begin{remark}
The functional $\D$ is in general not necessarily jointly convex w.r.t the minimizing variables. However, at a stationary point, if the weighted cost and maximizing potentials satisfy the convexity criteria, the functional $\D$ is convex w.r.t the minimizing variables in some sense.
\end{remark}

\begin{proof}
Note that $\int_{\X} a_i g_i \,\mathrm{d}\mu_i$ is linear for $i\in I_-$, thus our goal is equivalent to show 
    \[
    t\mapsto \int_{\X} a_1\Big(-\sum_{i\in I_+\setminus \{1\}} \frac{a_i}{a_1}\bar{f_i}+\sum_{i\in I_-} \frac{\abs{a_i}}{a_1}g_{i,t}^c\Big)^c(x_1) \,\mathrm{d} \mu_1(x_1)
    \]
is convex.
    
Expanding the integrand in terms of the $c$-transforms, we see that
    \[
    \begin{aligned}
    & \int_{\X} a_1\Big(-\sum_{i\in I_+\setminus \{1\}} \frac{a_i}{a_1}\bar{f_i} + \sum_{i\in I_-} \frac{\abs{a_i}}{a_1}g_{i,t}^c\Big)^c(x_1) \,\mathrm{d} \mu_1(x_1)\\
    =&\inf_{Y} \int_{\X} \Big( a_1c(x_1, Y(x_1))+\sum_{i\in I_+ \setminus \{1\}} a_i \bar{f}_i(Y(x_1))- \sum_{i\in I_-} \abs{a_i} g_{i,t}^c(Y(x_1)) \Big)\,\mathrm{d} \mu_1(x_1)\\
    =&\inf_{Y}\int_{\X} \Big( a_1c(x_1, Y(x_1))+\sum_{i\in I_+ \setminus \{1\}} a_i \bar{f}_i(Y(x_1))+ \sum_{i\in I_-}\sup_{x_i} \abs{a_i}\big(g_{i,t}(x_i)-c(x_i,Y(x_1))  \Big)\,\mathrm{d} \mu_1\\
 = &\inf_{Y} \sup_{\gamma\in \Pi(\mu_1,\cdot) }\int_{\X\times \X^{|I_-|}} \Big( h(Y(x_1); x_1,\mathbf{x}^-) + \sum_{i\in I_-} \abs{a_i} g_{i,t}(x_i)  \Big)\,\mathrm{d} \gamma(x_1,\mathbf{x}^-),
    \end{aligned}
    \]
where $Y:\X\to\Y$ and $\gamma$ is a joint coupling over $\X\times\X^{|I_-|}$, whose first marginal is $\mu_1$. If we choose some compact, convex set $K\subset \Y$ and restrict $Y$ to take values in $K$, then the last line is bounded from above by 
\[
\inf_{Y:\X\to K} \sup_{\gamma\in \Pi(\mu_1,\cdot) }\int_{\X\times \X^{|I_-|}} \Big( h(Y(x_1); x_1,\mathbf{x}^-) + \sum_{i\in I_-} \abs{a_i} g_{i,t}(x_i)  \Big)\,\mathrm{d} \gamma(x_1,\mathbf{x}^-).
\]
Clearly, the space of maps $Y:\X\to K$ is a weakly compact subset of  $L^1(\mu_1)$.  Since $h$ is convex with respect to the $y$ variable, it follows that the functional is weakly lower semicontinuous with respect to $Y$.  It is also clear that the functional is concave with respect to $\gamma$ and weakly upper semicontinuous with respect to the weak convergence of $\gamma$. 

Thus, by Sion's minimax theorem, we can interchange the order of inf and sup. Defining
    \[
H_K(x_1,\mathbf{x}^-):=\inf_{y\in K} h(y; x_1, \mathbf{x}^{-}),
    \]
we have 
\[
D(\bar{f}_2,\ldots, \bar{f}_{\ell}, g_{\ell+1,t},\ldots, g_{m,t})\leqslant \sup_{\gamma\in \Pi(\mu_1,\cdot) }\int_{\X\times \X^{|I_-|}} \Big( H_K(x_1,\mathbf{x}^-)+ \sum_{i\in I_-} \abs{a_i} g_{i,t}(x_i)  \Big)\,\mathrm{d} \gamma(x_1,\mathbf{x}^-).
\]
We then have the following string of inequalities
    \begin{align*}
       &\sup_{\gamma\in \Pi(\mu_1,\cdot) }\int_{\X\times \X^{|I_-|}} \Big( H_K(x_1,\mathbf{x}^-) +  \sum_{i\in I_-} \abs{a_i} g_{i,t}(x_i)  \Big)\,\mathrm{d} \gamma(x_1,\mathbf{x}^-)\\
       \leqslant &\sup_{\gamma_1, \gamma_0\in \Pi(\mu_1,\cdot) }t\int_{\X\times \X^{|I_-|}} \Big( H_K(x_1,\mathbf{x}^-) + \sum_{i\in I_-} \abs{a_i} g_i(x_i)  \Big)\,\mathrm{d} \gamma_1(x_1,\mathbf{x}^-)\\
       &\hspace{3cm}+(1-t)\int_{\X\times \X^{|I_-|}} \Big( H_K(x_1,\mathbf{x}^-) + \sum_{i\in I_-} \abs{a_i}\bar{f}_i^c(x_i)  \Big)\,\mathrm{d} \gamma_0(x_1,\mathbf{x}^-)\\
       \leqslant& \inf_{\substack{Y_1:\X\to K\\ Y_0:\X\to K} }\sup_{\gamma_1, \gamma_0\in \Pi(\mu_1,\cdot) } t\int_{\X\times \X^{|I_-|}} \Big( h(Y_1(x_1); x_1,\mathbf{x}^-) + \sum_{i\in I_-} \abs{a_i} g_{i}(x_i)  \Big)\,\mathrm{d} \gamma_1(x_1,\mathbf{x}^-)\\
       &\hspace{3cm}+(1-t)\int_{\X\times \X^{|I_-|}} \Big( h(Y_0(x_1); x_1,\mathbf{x}^- ) + \sum_{i\in I_-} \abs{a_i} \bar{f}_i^c(x_i)  \Big)\,\mathrm{d} \gamma_0(x_1,\mathbf{x}^-)\\
       =&\inf_{\substack{Y_1:\X\to K\\ Y_0:\X\to K}} t\int_{\X} \Big( a_1c(x_1, Y_1 (x_1))+\sum_{i\in I_+ \setminus \{1\}} a_i \bar{f}_i(Y_1(x_1))- \sum_{i\in I_-} \abs{a_i} g_{i}^c(Y_1 (x_1)) \Big)\,\mathrm{d} \mu_1(x_1)\\
       &\hspace{1.2cm}+(1-t)\int_{\X} \Big( a_1c(x_1, Y_0(x_1))+\sum_{i\in I_+ \setminus \{1\}} a_i \bar{f}_i(Y_0(x_1))- \sum_{i\in I_-} \abs{a_i} \bar{f}_{i}^c(Y_0(x_1)) \Big)\,\mathrm{d} \mu_1(x_1)
    \end{align*}
Letting $K$ approach $\X$, we have now shown that
\[
t\mapsto D(\bar{f}_2,\ldots, \bar{f}_{\ell}, g_{\ell+1,t},\ldots, g_{m,t})
\]
is convex.

\end{proof}

In the following theorem, we will assume that the signed barycenter is absolutely continuous. And we will provide a sufficient condition in \Cref{prop:AC-hold}. 

\begin{theorem}[Sufficient condition for a saddle point]\label{thm:saddle_pt_sufficient}
Let $\mu_1\in \Pc^{\textup{AC}}(\X)$ and $\bar{f}_{[I\setminus \{1\}]}$ be a $c$-concave stationary point of $\D$. Suppose $h$ is uniformly bounded from below and $h(y;x_1,\mathbf{x}^-)$ is convex for every fixed $(x_1, \mathbf{x}^-)\in \X\times\mathcal{X}^{|I_-|}$. Assume that $\bar{\nu} = \push{T_{(-\sum_{i=2}^m \frac{a_{i}}{a_1}\bar{f}_{i})^c}}{\mu_1}$ defined in \eqref{eq:bary-po-f} is absolutely continuous, then $\bar{f}_{[I\setminus \{1\}]}$ is a saddle point of $\D$.
\end{theorem}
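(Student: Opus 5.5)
The plan follows the two-bullet outline stated before \Cref{prop:cond_convex}. Since $\D$ is concave in the maximizing variables (\Cref{prop:func-dual-concave} together with \Cref{lem:c-tran-prop}), it suffices to show
\[
\D(\bar{f}_{[I_+\setminus\{1\}]},\bar{f}_{[I_-]})=\inf_{f_{[I_-]}}\D(\bar{f}_{[I_+\setminus\{1\}]},f_{[I_-]}).
\]
Combined with stationarity in the maximizing directions, this yields the saddle property. In more detail, concavity plus a vanishing first variation shows that $\bar f_{[I_+\setminus\{1\}]}$ maximizes $\D(\cdot,\bar f_{[I_-]})$. Then $\sup\inf\D\geqslant \inf_{f_{[I_-]}}\D(\bar f_+,f_{[I_-]})=\D(\bar f)=\sup_{f_+}\D(f_+,\bar f_{[I_-]})\geqslant \inf\sup\D$, and the reverse inequality $\sup\inf\leqslant\inf\sup$ is automatic. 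Passing to the variables $g_i=f_i^c$, I would work with $D(\bar f_{[I_+\setminus\{1\}]},g_{[I_-]})$. Because $f_i\mapsto f_i^{cc}$ does not change the infimum over minimizing variables, it is enough to show that $\bar g_{[I_-]}=\bar f^c_{[I_-]}$ minimizes $D(\bar f_{[I_+\setminus\{1\}]},\cdot)$ over bounded continuous $g_{[I_-]}$.

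By \Cref{prop:cond_convex}, the map $t\mapsto D(\bar f_{[I_+\setminus\{1\}]},g_{[I_-],t})$ with $g_{i,t}=tg_i+(1-t)\bar g_i$ is convex on $[0,1]$. A convex function on $[0,1]$ attains its minimum at $t=0$ as soon as its right derivative at $0$ is nonnegative. So the remaining task is to compute
\[
\lim_{t\to0^+}\frac{D(\bar f_+,g_{[I_-],t})-D(\bar f_+,\bar g_{[I_-]})}{t}\geqslant 0 .
\]
The linear terms contribute $\sum_{i\in I_-}a_i\int(g_i-\bar g_i)\,\mathrm{d}\mu_i$. For the last term, I would apply the chain rule through two $c$-transforms. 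The outer transform $(\cdot)^c$ against $\mu_1$ is differentiable at $\bar f_1$ because $\mu_1$ is absolutely continuous (\Cref{lem:Gangbo} 2) and \Cref{lem:OT-dual-optimality}). Its derivative in direction $\psi$ is $-\int\psi\,\mathrm{d}\bar\nu$ with $\bar\nu=\push{T_{\bar f_1^c}}{\mu_1}$, and here $\psi=\sum_{i\in I_-}\frac{|a_i|}{a_1}\,\frac{\mathrm{d}}{\mathrm{d}t}g_{i,t}^c$. For the inner derivative of $g_{i,t}^c=(\bar g_i+t(g_i-\bar g_i))^c$ on $\Y$, I would use the hypothesis that $\bar\nu$ is absolutely continuous. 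This makes $\bar f_i=\bar g_i^c$ differentiable $\bar\nu$-a.e., so that, by \Cref{lem:Gangbo} 2) with the roles of $\X$ and $\Y$ swapped, $\frac{\mathrm{d}}{\mathrm{d}t}g_{i,t}^c(y)=-(g_i-\bar g_i)(T_{\bar f_i}(y))$ for $\bar\nu$-a.e.\ $y$. Assembling these pieces, the derivative equals
\[
\sum_{i\in I_-}a_i\int(g_i-\bar g_i)\,\mathrm{d}\mu_i+\sum_{i\in I_-}|a_i|\int(g_i-\bar g_i)\circ T_{\bar f_i}\,\mathrm{d}\bar\nu
=\sum_{i\in I_-}|a_i|\int(g_i-\bar g_i)\,\mathrm{d}\big(\push{T_{\bar f_i}}{\bar\nu}-\mu_i\big).
\]

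The final step is to show $\push{T_{\bar f_i}}{\bar\nu}=\mu_i$ for $i\in I_-$, which makes the derivative vanish. This comes from stationarity. By \Cref{prop:stationary} and \Cref{prop:saddle}, there is an optimal plan $\pi_i\in\Pi(\mu_i,\bar\nu)$ concentrated on $\{\bar f_i^c(x)+\bar f_i(y)=c(x,y)\}$. Since $\bar\nu$ is absolutely continuous and the cost is twisted, this plan is induced by the map $T_{\bar f_i}$, so $\mu_i=\push{T_{\bar f_i}}{\bar\nu}$. Hence the right derivative is $0$, and convexity gives $D(\bar f_+,g_{[I_-]})\geqslant D(\bar f_+,\bar g_{[I_-]})=\D(\bar f)$ for every $g_{[I_-]}$, which proves the saddle property. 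I expect the main obstacle to be justifying the two-level differentiation. This requires exchanging the limit with the integrals, using dominated convergence with the boundedness of $g_i-\bar g_i$ and the Lipschitz bound $|(\phi+t\psi)^c-\phi^c|\leqslant t\|\psi\|_\infty$. It also requires handling the fact that the minimizer $y$ in the outer transform varies with $t$. For that, I would use the envelope argument of \Cref{lem:Gangbo} and rely on the a.e.\ uniqueness of minimizers guaranteed by the absolute continuity of $\mu_1$ and of $\bar\nu$.
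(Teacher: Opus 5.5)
Your argument is correct in outline, but it is not the route the paper takes. You implement the two-bullet plan stated before \Cref{prop:cond_convex}. Convexity of $t\mapsto D(\bar f_{[I_+\setminus\{1\}]},g_{[I_-],t})$ comes from \Cref{prop:cond_convex}, the right derivative at $t=0$ is computed through the two nested $c$-transforms, and it vanishes because $\push{T_{\bar f_i}}{\bar\nu}=\mu_i$. The paper's proof never differentiates in $g$ and does not invoke \Cref{prop:cond_convex}. It sets $H=\inf_y h$ and uses $\inf_y\sup_{\mathbf{x}^-}\geqslant\sup_{\mathbf{x}^-}\inf_y$ to obtain $D(\bar f_{[I_+\setminus\{1\}]},g_{[I_-]})\geqslant\int\big(\sum_{i\in I_+\setminus\{1\}}a_i\bar f_i^c+H\big)\,\mathrm{d}\pi$ for every $g_{[I_-]}$ and every $\pi\in\Pi(\mu_1,\ldots,\mu_m)$. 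It then shows that the projection $\bar\pi$ of a glued plan $\gamma\in\Pi_*(\mu_1,\ldots,\mu_m;\bar\nu)$ makes the right-hand side at least $\D(\bar f)$. Convexity of $h$ is used only fiberwise. Absolute continuity of $\bar\nu$ gives $\nabla_y c(x_i,y)=\nabla\bar f_i(y)$ $\gamma$-a.e., hence $\nabla_y h(y;x_1,\mathbf{x}^-)=\sum_{i=1}^m a_i\nabla\bar f_i(y)=0$. Convexity upgrades this critical point to $H(x_1,\mathbf{x}^-)=h(y;x_1,\mathbf{x}^-)$. That certificate is purely inequality-based: it needs no envelope theorem, no stability of minimizers, and no exchange of limit and integral. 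Your ``convex plus critical implies minimal'' argument is more conceptual and reuses the first-variation machinery, but it moves all the analysis into the derivative.

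There, your proposed justification does not work as stated. $\bar g_i=\bar f_i^c$ is typically unbounded on the unbounded domains allowed here, so $g_i-\bar g_i$ is not bounded. Hence the bound $|(\phi+t\psi)^c-\phi^c|\leqslant t\|\psi\|_\infty$ is unavailable, and \Cref{lem:Gangbo} (stated for $C_b$ potentials) does not apply verbatim. This is repairable, because by convexity you only need $\lim_{t\to0^+}$ of the difference quotient to be $\geqslant 0$.

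First, take $T_{\bar f_i}(y)$ to be a minimizer in $\bar g_i^c(y)=\bar f_i(y)$. Then $g_{i,t}^c\leqslant\bar f_i-t\,(g_i-\bar g_i)\circ T_{\bar f_i}$ everywhere. Next, set $\Phi_t:=-\sum_{i\in I_+\setminus\{1\}}\frac{a_i}{a_1}\bar f_i+\sum_{i\in I_-}\frac{|a_i|}{a_1}g_{i,t}^c$ and let $y_t(x_1)$ be a minimizer in $\Phi_t^c(x_1)$. This yields $a_1(\Phi_t^c-\bar f_1^c)(x_1)\geqslant t\sum_{i\in I_-}|a_i|\,(g_i-\bar g_i)\big(T_{\bar f_i}(y_t(x_1))\big)$. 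The argmins are single-valued at $T_{\bar f_1^c}(x_1)$ for $\mu_1$-a.e. $x_1$ and at $\bar\nu$-a.e. $y$. This continuity gives the pointwise lower bound $\sum_{i\in I_-}|a_i|(g_i-\bar g_i)\big(T_{\bar f_i}(T_{\bar f_1^c}(x_1))\big)$ for the limit of the quotient.

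Finally, for each fixed $x_1$ the map $t\mapsto a_1\Phi_t^c(x_1)$ is convex. It is the infimum over $y$ of $\sup_{\mathbf{x}^-}\big[h(y;x_1,\mathbf{x}^-)+\sum_{i\in I_-}|a_i|g_{i,t}(x_i)\big]$, which is jointly convex in $(y,t)$. So its difference quotients decrease as $t\downarrow0$, and monotone convergence replaces the dominated convergence you invoked. The two pushforward identities then make the limit exactly $0$.
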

\begin{proof}
Let $H(x_1,\mathbf{x}^-):=\inf_{y\in \Y} h(y; x_1, \mathbf{x}^-)$ and it is bounded from below by our assumption.  For any continuous function $\psi:\X^{|I_-|}\to\R$, we introduce a new $H$-based transform 
    \[
    \psi^H(x_1):=\sup_{\mathbf{x}^-\in \X^{|I_-|}} H(x_1,\mathbf{x}^-)-\psi(\mathbf{x}^-),
    \]
and for any bounded continuous function $g_i$ for $i\in I_-$, it follows that 
    \[
    \int_{\X} a_1\Big(-\sum_{i\in I_+\setminus \{1\}} \frac{a_i}{a_1}\bar{f_i}-\sum_{i\in I_-} \frac{a_i}{a_1}g_i^c\Big)^c(x_1) \, \mathrm{d} \mu_1(x_1)\geqslant \int_{\X} \Big(\sum_{i\in I_-} a_i g_i\Big)^H(x_1) \, \mathrm{d} \mu_1(x_1).
    \]
Hence, 
     \begin{equation*}
     \begin{aligned}
         &D(\bar{f}_2,\ldots, \bar{f}_{\ell},  g_{\ell+1},\ldots, g_m)\\
         \geqslant &\sum_{i\in I_+\setminus\{1\}} a_i\int_{\X} \bar{f}_i^c(x_i)\, \mathrm{d}\mu_i(x_i)+ \sum_{i\in I_-} a_i\int_{\X} g_i(x_i)\, \mathrm{d}\mu_i(x_i)+ \int_{\X} \Big(\sum_{i\in I_-} a_i g_i\Big)^H(x_1) \,\mathrm{d} \mu_1(x_1)\\
         \geqslant &\int_{\X^m} \Big(\sum_{i\in I_+\setminus\{1\}} a_i\bar{f}_i^c(x_i) + H(x_1, \mathbf{x}^-)\Big) \, \mathrm{d}\pi(x_1, x_2, \ldots, x_m),
     \end{aligned}
     \end{equation*}
for any $\pi\in \Pi(\mu_1,\mu_2,\ldots,\mu_m)$. Apparently,
    \[
    \D(\bar{f}_{[I_+\setminus \{1\}]},\bar{f}_{\ell+1},\ldots, \bar{f}_m)= D(\bar{f}_{[I_+\setminus \{1\}]}, \bar{g}_{\ell+1},\ldots,\bar{g}_{m}).
    \] 
Thus, if we can find a coupling $\bar{\pi}\in \Pi(\mu_1,\ldots,\mu_m)$ such that 
\begin{equation}\label{eq:bar_pi_sufficient_condition}
\D(\bar{f}_{[I\setminus \{1\}]})\leqslant \int_{\X^m} \Big(\sum_{i\in I_+\setminus\{1\}} a_i\bar{f}_i^c(x_i) + H(x_1, \mathbf{x}^-)\Big) \,\mathrm{d}\bar{\pi}(x_1, x_2, \ldots, x_m),
\end{equation}
then we will have shown that $\bar{f}_{[I\setminus \{1\}]}$ is a saddle point.

Using the characterization of stationary point in \Cref{subsec:stationary}, each $\bar{f}_i$ is an optimal Kantorovich potential for the transport of $\bar{\nu}$ to $\mu_i$ for all $i\in \{1,\ldots, m\}$ satisfying \eqref{eq:saddle-strong-duality}.
By the gluing lemma, there exists a coupling $\gamma\in \Pi_*(\mu_1,\ldots,\mu_m; \bar{\nu})$ such that for all $i\in \{1,\ldots, m\}$
    \[
    \int_{\X^m\times\Y} \Big( c(x_i,y)-\bar{f}_i(y)-\bar{f}_i^c(x_i)\Big)\, \mathrm{d}\gamma(x_1,\ldots, x_m,y)=0.
    \]
Furthermore, by Proposition \ref{prop:gg_derivative}, we may disintegrate 
    \[
    \mathrm{d}\gamma(x_1,\ldots, x_m,y)=\mathrm{d} \pi^+_{(\mathbf{x}_+,y)}(\mathbf{x}_-) \mathrm{d}\nu^+(\mathbf{x}_+,y)
    \]
and for $\nu^+$-almost every $(\mathbf{x}_+,y)\in \X^{|I_+|}\times\Y$, we have  
    \[
    \int_{\X^{|I_-|}}\sum_{i=1}^m a_i\nabla_y c(x_i,y)\, \mathrm{d}\pi^+_{(\mathbf{x}^+,y)}(\mathbf{x}_-)=0.
    \]
Let us choose $\bar{\pi}=P_{\#}\gamma$ where $P:\X^m\times\Y\to\X^m$ is the projection map that forgets $\Y$. From our assumptions on $h$,  given some $\varepsilon>0$ and any $(x_1,\mathbf{x}_-)\in \X\times\X^{|I_-|}$, there exists a point $y_{\varepsilon}(x_1,\mathbf{x}_-)\in \Y$ such that
    \[
    H(x_1,\mathbf{x}_-)+\varepsilon >h(y_{\varepsilon}(x_1,\mathbf{x}_-); x_1,\mathbf{x}_- ).
    \]
Therefore,
    \begin{align*}
    &\varepsilon+\int_{\X^m} \Big(\sum_{i\in I_+\setminus\{1\}} a_i\bar{f}_i^c(x_i) + H(x_1, \mathbf{x}^-)\Big) \,\mathrm{d}\bar{\pi}(x_1, \ldots, x_m)\\
    >&\int_{\X^m} \Big(\sum_{i\in I_+\setminus\{1\}} a_i\bar{f}_i^c(x_i) + h(y_{\varepsilon}(x_1,\mathbf{x}_-) ; x_1,\mathbf{x}_-)\Big) \,\mathrm{d}\bar{\pi}(x_1, \ldots, x_m)\\
    =&\int_{\X^m\times \Y} \Big(\sum_{i\in I_+\setminus\{1\}} a_i\big(c(x_i,y)-\bar{f}_i(y)\big) + h(y_{\varepsilon}(x_1,\mathbf{x}_-) ; x_1,\mathbf{x}_-)\Big) \,\mathrm{d}\gamma(x_1, \ldots, x_m,y).
    \end{align*}
Note that $\sum_{i\in I_+\setminus\{1\}} a_i\big(c(x_i,y)-\bar{f}_i(y)\big)=\sum_{i=1}^m a_ic(x_i,y)-h(y; x_1,\mathbf{x}_-)$.  
Since 
    \[
    0\in \partial_y \Big(\sum_{i\in I_+\setminus\{1\}} a_i\big(c(x_i,y)-\bar{f}_i(y)\big)\Big)
    \]
for $\gamma$-almost every $(x_1,\ldots, x_m,y)$, we must have 
    \[
    0\in \partial_y \Big(\sum_{i=1}^m a_ic(x_i,y)-h(y; x_1,\mathbf{x}_- )\Big)
    \]
for $\gamma$-almost every $(x_1,\ldots, x_m,y)$. Combining this with the convexity of $y\mapsto h(y; x_1,\mathbf{x}_-)$, it follows that
    \[
    \begin{aligned}
    h(y_{\varepsilon}(x_1,\mathbf{x}_-); x_1,\mathbf{x}_- )&\geqslant h(y; x_1,\mathbf{x}_-) +  \sum_{i=1}^m a_i\nabla_y c(x_i,y)\cdot ( y_{\varepsilon}(x_1,\mathbf{x}_-)-y)\\
    &=h(y; x_1,\mathbf{x}_-) + L_{(x_1,\mathbf{x}_-,y)}(y_{\varepsilon}(x_1,\mathbf{x}_-)-y),
    \end{aligned}
    \]
where we denote the linear map by $L_{(x_1,\mathbf{x}_-,y)}:\R^d\to\R$ for notational simplicity. 
Now we use this inequality to continue the previous long inequality in the above
    \begin{align*}
    &\int_{\X^m\times \Y} \Big(\sum_{i\in I_+\setminus\{1\}} a_i\big(c(x_i,y)-\bar{f}_i(y)\big) + h(y_{\varepsilon}(x_1,\mathbf{x}_-) ; x_1,\mathbf{x}_-)\Big) \,\mathrm{d}\gamma(x_1, \ldots, x_m,y)\\
    \geqslant & \int_{\X^m\times \Y} \Big(\sum_{i\in I_+\setminus\{1\}} a_i\big(c(x_i,y)-\bar{f}_i(y)\big) + h(y; x_1,\mathbf{x}_-)+L_{(x_1,\mathbf{x}_-,y)}(y_{\varepsilon}(x_1,\mathbf{x}_-)-y)\Big) \,\mathrm{d}\gamma\\
    =& \int_{\X^m\times \Y} \Big(\sum_{i=1}^m a_ic(x_i,y)+L_{(x_1,\mathbf{x}_-,y)}(y_{\varepsilon}(x_1,\mathbf{x}_-)-y)\Big) \,\mathrm{d}\gamma(x_1, \ldots, x_m,y).
    \end{align*}
Since $\bar{f}_i$ is differentiable almost everywhere and $\bar{\nu}$ is absolutely continuous, it follows that $\nabla_y c(x_i,y)-\nabla f_i(y)=0$ for $\gamma$-almost every $(x_1,\ldots, x_m,y)\in \X^m\times \Y$.
In particular, this gives us
    \[
    a_1\nabla_y c(x_1,y)-\sum_{i\in I_-} |a_i|\nabla_y c(x_i,y)+\sum_{i\in I_+\setminus\{1\}} a_i \nabla \bar{f}_i(y)=a_1\nabla \bar{f}_1(y)+\sum_{i=2}^m a_i \nabla \bar{f}_i(y)=0
    \]
for $\gamma$-almost every $(x_1,\ldots, x_m,y)\in \X^m\times \Y$.  Thanks to the invexity of $y\mapsto a_1 c(x_1,y)-\sum_{i\in I_-} |a_i| c(x_i,y)+\sum_{i\in I_+\setminus\{1\}} a_i \bar{f}_i(y)$, this critical point condition implies that
    \[
    H(x_1,\ldots, x_m)=a_1 c(x_1,y)-\sum_{i\in I_-} |a_i| c(x_i,y)+\sum_{i\in I_+\setminus\{1\}} a_i \bar{f}_i(y)
    \]
for $\gamma$-almost every $(x_1,\ldots, x_m,y)\in \X^m\times \Y$. Using this equality, we see that
    \[
    \begin{aligned}
    &\int_{\X^m} \Big(\sum_{i\in I_+\setminus\{1\}} a_i\bar{f}_i^c(x_i) + H(x_1,\ldots, x_m)\Big) \, \mathrm{d}\bar{\pi}(x_1, \ldots, x_m)\\
    =&\int_{\X^m\times\Y} \Big(a_1 c(x_1,y)-\sum_{i\in I_-} |a_i| c(x_i,y)+\sum_{i\in I_+\setminus\{1\}} a_i \big(\bar{f}_i(y)+\bar{f}_i^c(x_i)\big)\Big)\, \mathrm{d}\gamma(x_1,\ldots, x_m,y)\\
    =&\int_{\X^m\times\Y} \Big(a_1 c(x_1,y)+\sum_{i=2}^m a_i \bar{f}_i(y)\Big)\, \mathrm{d}\gamma(x_1,\ldots, x_m,y)+\sum_{i=2}^m a_i\int_{\X} \bar{f}_i^c(x_i)\, \mathrm{d}\mu_i(x_i)\\
    \geqslant & a_1\int_{\X} \Big(-\sum_{i=2}^m a_i \bar{f}_i\Big)^c\, \mathrm{d} \mu_1(x_1)+\sum_{i=2}^m a_i\int_{\X} \bar{f}_i^c(x_i)\, \mathrm{d}\mu_i(x_i)\\
    =&\D(\bar{f}_2,\ldots, \bar{f}_{\ell}, \bar{f}_{\ell+1},\ldots, \bar{f}_m),
    \end{aligned}
    \]
    where the penultimate inequality is simply from the definition of the $c$-transform. Hence, we have shown that 
    \[
    \D(\bar{f}_2,\ldots, \bar{f}_{\ell}, \bar{f}_{\ell+1},\ldots, \bar{f}_m)\leqslant \int_{\X^m} \Big(\sum_{i\in I_+\setminus\{1\}} a_i\bar{f}_i^c(x_i) + H(x_1,\ldots, x_m)\Big) \,\mathrm{d}\pi(x_1, \ldots, x_m)
    \]
which completes the argument that $\bar{f}_{[i\neq 1]}$ is a saddle point.  The optimality of $\bar{\nu}$ follows from \Cref{thm:main-induced}.

\end{proof}

\begin{remark}
For the quadratic cost $c(x,y)=\frac{1}{2}\abs{x-y}^2$, given a $c$-concave stationary point $\bar{f}_{[I\setminus \{1\}]}$, we can see to examine $h(y;x_1,\mathbf{x}^-)$ is convex, it is equivalent to test if maximizing variables $\bar{f}_{[I_+\setminus \{1\}]}$ associated with positive weights satisfy
    \begin{equation}\label{eq:additional-po}
    y\mapsto \frac{\abs{y}^2}{2}-\sum_{i\in I_+\setminus \{1\}} a_i(\frac{\abs{y}^2}{2}-\bar{f}_i(y)) \mathrm{~is~convex}.
    \end{equation}
This is done by the direct computation from the characterization of $c$-concave function for the quadratic cost. Furthermore, If $\bar{f}_{[I_+\setminus \{1\}]}$ are second-order differentiable, the criteria \eqref{eq:additional-po} can be simplified to
    \[
    \sum_{i\in I_+\setminus \{1\}} a_i\geqslant \sum_{i\in I_+\setminus \{1\}} a_i D^2 \bar{f}_i\geqslant \sum_{i \in I_-} \abs{a_i}-a_1.
    \]

In addition, the redundant potential $\bar{f}_1=-\sum_{i=2}^m \frac{a_{i}}{a_{1}}\bar{f}_{i}$ is $c$-concave, since 
    \begin{align*}
    &\frac{\abs{y}^2}{2}-\bar{f_1}(y)=\frac{\abs{y}^2}{2}+\sum_{i=2}^m \frac{a_{i}}{a_1}\bar{f}_{i}(y)=\frac{\abs{y}^2}{2}+\sum_{i\in I_+\setminus\{1\}} \frac{a_{i}}{a_1}\bar{f}_{i}(y)- \sum_{i\in I_-}\frac{\abs{a_{i}}}{a_1}\bar{f}_i(y)\\ 
    &=\frac{1-\sum_{i=2}^m a_i}{a_1}\frac{\abs{y}^2}{2}+\sum_{i\in I_+\setminus\{1\}} \frac{a_{i}}{a_1}\bar{f}_{i}(y)- \sum_{i\in I_-}\frac{\abs{a_{i}}}{a_1}\bar{f}_{i}(y)\\
    &=\frac{1}{a_1}\left[\frac{\abs{y}^2}{2}- \sum_{i\in I_+\setminus \{1\}} a_i (\frac{\abs{y}^2}{2}-\bar{f}_i(y))+ \sum_{i\in I_-}\abs{\alpha_i} (\frac{\abs{y}^2}{2}-\bar{f}_i(y))\right]
    \end{align*}
which is a sum of convex functions, thus is $c$-concave by the characterization again.
\end{remark}

\begin{proposition}\label{prop:AC-hold}
Under our main assumptions on the cost $c$ and the spaces $\X,\Y$, suppose that $f$ is the optimal $c$-concave Kantorovich potential for the transport of some $\nu\in \Pc(\Y)$ to $\mu\in \Pc(\X)$.  If $f$ is $\lambda$-convex for some $\lambda\in \R$, $\mu$ is absolutely continuous, and $x\mapsto \nabla_y c(x,y)$ has a uniformly Lipschitz inverse, then $\nu$ must be absolutely continuous with respect to the Lebesgue measure.
\end{proposition}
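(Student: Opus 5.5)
Since $f$ is $c$-concave and optimal for the transport of $\nu$ to $\mu$, the roles are as follows. By \Cref{thm:Brenier} and \Cref{lem:Kan-examples}, any optimal plan $\pi\in\Pi(\mu,\nu)$ is concentrated on the $c$-superdifferential $\{(x,y): f(y)+f^c(x)=c(x,y)\}$. For such pairs, $y$ minimizes $c(x,\cdot)-f(\cdot)$, so wherever $f$ is differentiable we get $\nabla f(y)=\nabla_y c(x,y)$. By the bi-twist assumption this gives $x=T_f(y)=(\nabla_y c(\cdot,y))^{-1}(\nabla f(y))$. The plan is to show that every Lebesgue null set $N\subset\Y$ has $\nu(N)=0$.

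\textbf{Step 1: reduce to differentiability points.} Write $D\subset\Y$ for the set where $f$ is differentiable. I would show $\nu(\Y\setminus D)=0$. Because $f$ is $\lambda$-convex, $f-\frac{\lambda}{2}|y|^2$ is convex, and the non-differentiability set $\Sigma$ of a convex function is countably $(d-1)$-rectifiable. For $y\in\Sigma$, the matching $x$'s satisfy $\nabla_y c(x,y)\in\partial(\text{superdifferential of } f)(y)$; the superdifferential of the concave function $c(x,\cdot)-(\text{stuff})$ touching from below gives $\nabla_y c(x,y)\in\partial^+ f(y)$, while $\lambda$-convexity gives $\partial^- f(y)\neq\emptyset$. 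If both are nonempty then $f$ is differentiable at $y$. More precisely, $c(x,\cdot)-f\geqslant c(x,y)-f(y)$ shows that $f$ is touched from above by the $C^2$ function $c(x,\cdot)+\text{const}$ at $y$. Together with $\lambda$-convexity (touching from below by a paraboloid), this forces differentiability at $y$. So every $y$ in the projection of $\textup{spt}\,\pi$ that is reached from some $x$ lies in $D$, and $\nu(\Y\setminus D)=0$ without using any rectifiability. This is the cleanest route and I would use it.

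\textbf{Step 2: Lipschitz structure of $T_f$.} On $D$, $T_f(y)=G_y(\nabla f(y))$, where $G_y:=(\nabla_y c(\cdot,y))^{-1}$ is uniformly Lipschitz by hypothesis. I would then show that $y\mapsto\nabla f(y)$ is locally Lipschitz on $D$, using semiconcavity together with semiconvexity. Locally, $f$ is semiconcave because it is $c$-concave with $c\in C^2$; it is semiconvex by assumption. Hence $f\in C^{1,1}_{loc}$ and $D=\Y$. Then $T_f$ is locally Lipschitz, provided $G_y$ also depends Lipschitz-continuously on $y$. This joint dependence is the delicate point: I would read the hypothesis ``uniformly Lipschitz inverse'' as joint Lipschitz continuity on compacts, and failing that, derive it from $c\in C^2$ and the inverse function theorem.

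\textbf{Step 3: conclude.} Since $\pi$ is concentrated on the graph $\{(T_f(y),y)\}$, we have $\mu=\push{T_f}{\nu}$, and $\nu$ is concentrated on $T_f^{-1}$-fibres. To transfer null sets I would instead use the map from $x$ to $y$. On each fibre, $y$ is determined by $x$ through the twist in $x$ applied to $\nabla f^c(x)$, which exists $\mu$-a.e. because $\mu$ is AC, so this direction is not directly helpful. The better approach is the area formula: write $\nu=\push{S}{\mu}$ with $S=T_{f^c}$, the inverse of $T_f$ on the support. Because $T_f$ is locally Lipschitz and $T_f\circ S=\id$ $\mu$-a.e., for a null set $N$ we have $\nu(N)=\mu(S^{-1}(N))\leqslant\mu(T_f(N))$, and $T_f(N)$ is null since Lipschitz maps preserve null sets. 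Hence $\nu(N)=0$.

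The main obstacle is Step 2, namely obtaining Lipschitz continuity of $\nabla f$ via the two-sided semiconvexity bound and the joint Lipschitz regularity of $G_y$; all other steps are routine.
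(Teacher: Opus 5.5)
Your proposal is correct and follows essentially the same route as the paper: $C^{1,1}$-type regularity of $f$ from $c$-concavity plus $\lambda$-convexity, the first-order condition $\nabla f(y)=\nabla_y c(x,y)$ on the set where the optimal plan is concentrated, a Lipschitz map $y\mapsto x$ obtained from the twist and the Lipschitz-inverse hypothesis, and the fact that Lipschitz maps send Lebesgue-null sets to null sets, combined with $\mu$ being absolutely continuous. The detour through $T_{f^c}$ in Step 3 is unnecessary, because $\mu=\push{T_f}{\nu}$ already gives $\nu(N)\leqslant\mu(T_f(N))=0$ directly, which is how the paper concludes. Your explicit treatment of how $(\nabla_y c(\cdot,y))^{-1}$ depends on $y$ is a point the paper passes over.
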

\begin{proof}

Since $f$ is $c$-concave and $\lambda$-convex, it follows that $f$ is a $W^{2,\infty}$ function. Let $\pi\in\Pi(\mu,\nu)$ be an optimal transport plan between $\mu$ and $\nu$.  From the differentiability of $f$ and \Cref{thm:Brenier}, we have $\nabla_y c(x,y)=\nabla f(y)$ for $\pi$-almost every $(x,y)$. Thanks to the twist condition, this implies that there exists a map $S:\Y\to\X$ such that $\pi(x,y)=(S(y),y)_{\#}\bar{\nu}$. Using the equation $\nabla_y c(S(y),y)=\nabla f(y)$ and our assumptions on the cost, it follows that $S$ is Lipschitz.  Since Lipschitz maps preserve Lebesgue sets of measure zero, it follows that $\nu$ cannot give mass to any set of Lebesgue measure zero.  Hence, $\nu$ is absolutely continuous with respect to the Lebesgue measure. 
\end{proof}

\begin{remark}
From \Cref{thm:saddle_pt_sufficient} and \Cref{prop:AC-hold}, we obtain a sufficient condition to upgrade from a stationary point to a saddle point, by requiring additional absolutely continuous marginal associated with a positive weight, and testing properties on the cost function and the dual potentials $\bar{f}_{[I_+\setminus \{1\}]}$ associated with positive weights. 

In particular, for the quadratic cost $c(x,y)=\frac{1}{2}\abs{x-y}^2$, to test if a $c$-concave stationary point $\bar{f}_{[I\setminus \{1\}]}$ is a saddle point, it suffices to require $\mu_i\in \Pc^{\textup{AC}}(\X)$ for some $i\in I_+\setminus \{1\}$ and $y\mapsto \frac{\abs{y}^2}{2}-\sum_{i\in I_+\setminus \{1\}} a_i(\frac{\abs{y}^2}{2}-\bar{f}_i(y))$ is convex.

\end{remark}


\vskip 0.2in

\acks{M.J is partially supported by NSF grant DMS-2400641. We would like to thank Nicol\'{a}s Garc\'{i}a Trillos for helpful discussions. 
}

\bibliography{reference}
\appendix

\section{Proofs}

\begin{proof}[Proof of \Cref{lem:c-tran-prop}]
Part 1, see \citet[Proposition 1.34]{Santambrogio2015AOT}.
    
Part 2, $f^{ac}(x)=\inf_{y\in\Y} ac(x,y)-f(y)=a\inf_{y\in\Y} c(x,y)-\frac{f(y)}{a}=a (\frac{f}{a})^c(x).$

Part 3, 
    \begin{equation*}
    \begin{aligned}
    [(1-t)f_1+tf_2]^c(x)&= \inf_{y\in\Y} c(x,y)- (1-t)f_1(y) - tf_2(y)\\
       &=\inf_{y\in\Y} (1-t)[c(x,y)-f_1(y)] + t[c(x,y)-f_2(y)]\\
       &\geqslant (1-t) f_1^c(x) + t f_2^c(x).\\
    \end{aligned}
    \end{equation*}
    
Part 4, $f^c(x)=\inf_{y\in\Y}c(x,y)-f(y)\leqslant \inf_{y\in\Y}c(x,y)-g(y)=g^c(x)$.

Part 5, if for any $y\in \Y$, $f(y)\leqslant c(x_0,y)+a$ for some $x_0,a$, then $c(x_0,y)-f(y)\geqslant c(x_0,y)-c(x_0,y)-a= -a$ implies that $f^c(x_0)=\inf_{y\in \Y} c(x_0,y)-f(y)\geqslant -a >-\infty$. 

Recall that a function is $c$-concave if and only if it is the infimum of a family of $c$-affine functions, i.e., $f(y)=\inf_{x,a} c(x,y)+a\leqslant c(x,y)+a$ for any $x$ and $a$. Consequently, $f^{c}(x)>-\infty$ for any $x\in \X$. As $f^c$ is a $c$-concave function defined on $\X$, one may repeat this argument to obtain $f(y)=f^{cc}(y)>-\infty$.

Part 6, assume $\abs{f(x)}\leqslant M$ and note that $\{y\in \Y: c(x,y)-f(y)\leqslant a\}\subseteq \{y\in \Y: c(x,y)\leqslant M+a\}$. By the main assumption \ref{asp:cost_compact} and the fact that the closed subset of a compact set is compact, we have shown that $\{y\in \Y: c(x,y)-f(y)\leqslant a\}$ is compact for any $a\in \R$. Thus $c(x,y)-f(y)$ is coercive in $y$ and there is a minimizer for $f^c(x)=\inf c(x,y)-f(y)$.
\end{proof}

\begin{proof}[Proof of \Cref{prop:gg_derivative}]
For convenience we will write $S_t(y):=S(t,y)$.
Let $f_i:\Y\to\R$ be the optimal Kantorovich potential for the transport of $\nu$ to $\mu_i$. We then have the standard inequalities from the classical OT theory
    \[
    \int_{\Y} f_i(S_t(y))\,\mathrm{d}\nu(y)+\int_{\X}f_i^c(x_i)\,\mathrm{d}\mu_i(x_i)\leqslant K_c(\mu_i, \nu_t)\leqslant \int_{\Y} c(X_i(y), S_t(y))\,\mathrm{d}\nu(y),
    \]
with equality when $t=0$. 
As a result, we can bound
    \[
    \frac{\B(\nu_t)-\B(\nu)}{t}\leqslant
    \int_{\Y}\Big( \sum_{i\in I_+} a_i\frac{c(X_i(y), S_t(y))-c(X_i(y), y)}{t}+\sum_{i\in I_-}a_i\frac{f_i(S_t(y))-f_i(y)}{t}\Big)\mathrm{d}\nu(y).
    \]
Since $f_i$ is locally Lipschitz, and $w$ is a smooth compactly supported vector field, it follows by dominated convergence that
    \[
    \lim_{t\to 0^+} \frac{\B(\nu_t)-\B(\nu)}{t}\leqslant \int_{\Y} w(y)\cdot \Big(\sum_{i\in I_+} a_i \nabla_y c(X_i(y), y)+\sum_{i\in I_-} a_i \nabla f_i(y)\, \Big)\,\mathrm{d}\nu(y).
    \]
Thanks to the absolute continuity of $\nu$ and the twisted condition from Assumption \eqref{asp:cost_twist} on the cost, for $\nu$-almost every $y$, the optimal map $X_i$ is determined by the equation $\nabla f(y)=\nabla_y c(X_i(y),y)$.  Thus, the above equation is equivalent to
    \[
    \lim_{t\to 0^+}\frac{\B(\nu_t)-\B(\nu)}{t}\leqslant \int_{\Y} w(y)\cdot \sum_{i=1}^m a_i \nabla_y c(X_i(y), y)\, \mathrm{d}\nu(y).
    \]
The opposite inequality follows from an identical argument.
\end{proof}

\begin{proof}[Proof of \Cref{lem:dual}]
    \begin{align*}
        \inf_{\nu\in \Pc(\Y)}\int_{\Y} \phi\, \mathrm{d}\nu + K_c(\mu,\nu)=&\inf_T \int_{\Y}\phi\, \mathrm{d}[\push{T}{\mu}]+\int_{\X} c(x,T(x))\, \mathrm{d}\mu\\
        =&\inf_T \int_{\X} [\phi(T(x))+ c(x,T(x))]\, \mathrm{d}\mu \\
        =&\int_{\X} \inf_{y=T(x)} [\phi(y)+ c(x,y)]\, \mathrm{d}\mu=\int_{\X} (-\phi)^{c}(x)\, \mathrm{d}\mu.
    \end{align*}
The existence of minimizer is obtained from \citep[see for example][Proposition 2.9]{Gangbo2004introduction}. See also part 6 in \Cref{lem:c-tran-prop}.

\end{proof}

\begin{proof}[Proof of \Cref{prop:func-dual-concave}]

For the simplicity of notation, let $J_{\mu_1}^c (f)=\int_{\X} f^c \mathrm{d}\mu_1$. Given two tuples of dual potentials $(f_{[I_+\setminus \{1\}]}^{(1)},f_{[I_-]}), (f_{[I_+\setminus \{1\}]}^{(2)},f_{[I_-]})$,
    \[
    \begin{aligned}
    &J_{\mu_1}^c(-\sum_{i\in I_+\setminus \{1\}} \frac{a_i}{a_1}(t f_i^{(1)}+(1-t)f_i^{(2)}) - \sum_{i\in I_-}\frac{a_i}{a_1} f_i)\\
    =&J_{\mu_1}^c (t [-\sum_{i\in I_+\setminus \{1\}} \frac{a_i}{a_1}f_i^{(1)} - \sum_{i\in I_-} \frac{a_i}{a_1}f_i] + (1-t) [-\sum_{i\in I_+\setminus \{1\}} \frac{a_i}{a_1}f_i^{(2)} - \sum_{i\in I_-} \frac{a_i}{a_1}f_i])\\
    \geqslant & t J_{\mu_1}^c (-\sum_{i\in I_+\setminus \{1\}} \frac{a_i}{a_1}f_i^{(1)} - \sum_{i\in I_-} \frac{a_i}{a_1}f_i) + (1-t) J_{\mu_1}^c (-\sum_{i\in I_+\setminus \{1\}} \frac{a_i}{a_1}f_i^{(2)} - \sum_{i\in I_-} \frac{a_i}{a_1}f_i),
    \end{aligned}
    \]
which shows $(f_2,\ldots, f_{\ell})\mapsto J_{\mu_1}^c(-\sum_{i=2}^m \frac{a_{i}}{a_1}f_i)$ is jointly concave.

\end{proof}

\begin{proof}[Proof of \Cref{prop:saddle}]

We plug the definition of redundant potential $f_1$ into \eqref{eq:linear-dual-func-po-f}
    \[
    \overline{\D}^+(f_{[I\setminus \{1\}]};\bar{f})=\sum_{i\in I\setminus \{1\}} \int_{\X} a_i f_i^c\, \mathrm{d}\mu_i + \int_{\Y}a_i f_i\, \mathrm{d}\bar{\nu}+\textup{constant},
    \]
and note that this linearized functional is separable w.r.t its inputs $f_{[I\setminus \{1\}]}$. Thus maximizing $\overline{\D}^+(f_{[I\setminus \{1\}]};\bar{f})$ w.r.t maximizing variables and minimizing $\overline{\D}^+(f_{[I\setminus \{1\}]};\bar{f})$ w.r.t minimizing variables can be be carried out in a coordinate-wise manner.

Consequently, the equivalence holds by the strongly duality in OT theory from \Cref{thm:Brenier}.

\end{proof}

\begin{proof}[Proof of \Cref{prop:stationary}]
Given the convex-concave property of linearized functional $\overline{\D}(f;\bar{f})$, we just need to show that $\bar{f}_{[I\setminus \{1\}]}$ is a stationary point to $\D(f)$ if and only if $\bar{f}_{[I\setminus \{1\}]}$ is a stationary point to $\overline{\D}(f;\bar{f})$. Again, we use the notation $J_{\mu}^c (f)=\int_{\X} f^c\, \mathrm{d}\mu$. 

We first exam the variation on the maximizing variables. For any tuples of bounded continuous functions $f_{[I_+ \setminus \{1\}]}$, we have
    \[
    \begin{aligned}
    \D^+(\bar{f}_{[I_+ \setminus \{1\}]}+\varepsilon f_{[I_+ \setminus \{1\}]},\bar{f}_{[I_-]})-\D^+(\bar{f}_{[I_+ \setminus \{1\}]},\bar{f}_{[I_-]})&=\mathsf{C}(\varepsilon)+\Rm{1}(\varepsilon),\\
    \overline{\D}^+(\bar{f}_{[I_+ \setminus \{1\}]}+\varepsilon f_{[I_+ \setminus \{1\}]},\bar{f}_{[I_-]}; \bar{f}) -\overline{\D}^+(\bar{f}_{[I_+ \setminus \{1\}]},\bar{f}_{[I_-]};\bar{f})&=\mathsf{C}(\varepsilon)+\overline{\Rm{1}}(\varepsilon),
    \end{aligned}
    \]
where we denote the common term $\sum_{i \in I_+\setminus \{1\}} a_i [J^c_{\mu_i}(\bar{f}_i+\varepsilon f_i)-J^c_{\mu_i}(\bar{f}_i)]$ by $\mathsf{C}(\varepsilon)$ and denote the different terms $a_1 [J^c_{\mu_1}(\bar{f}_1 - \varepsilon\sum_{i\in I_+\setminus \{1\}} \frac{a_i}{a_1}f_i)-J^c_{\mu_1}(\bar{f}_1)]$ and $a_1 \varepsilon\int_{\Y} ( \sum_{i\in I_+\setminus \{1\}} \frac{a_i}{a_1}f_i)\,\mathrm{d}\bar{\nu}$ by $\Rm{1}(\varepsilon)$ and $\overline{\Rm{1}}(\varepsilon)$ respectively. 

By \Cref{lem:OT-dual-optimality}, the first variation of the functional $J_{\mu_1}^c(f_1)$ at $\bar{f}_1=-\sum_{i=2}^{m}\frac{a_{i}}{a_1}\bar{f}_{i}$ is given by $\delta J^c_{\bar{f}_1;\mu_1}=-\push{\left(T_{\bar{f}^c_1}\right)}{\mu_1}=-\bar{\nu}$. Thus
    \begin{equation*}
    J^c_{\mu_1}(\bar{f}_1 - \varepsilon\sum_{i\in I_+\setminus \{1\}} \frac{a_i}{a_1}f_i)-J^c_{\mu_1}(\bar{f}_1)=-\int_{\Y} (-\varepsilon \sum_{i\in I_+\setminus \{1\}} \frac{a_i}{a_1}f_i)\, \mathrm{d}\bar{\nu}+o(\varepsilon)
    \end{equation*}
yields that  $\Rm{1}(\varepsilon) =\overline{\Rm{1}}(\varepsilon) + o(\varepsilon)$. Thus, 
    \[
    \begin{aligned}
    &\lim_{\varepsilon\to 0}\frac{\D^+(\bar{f}_{[I_+ \setminus \{1\}]}+\varepsilon f_{[I_+ \setminus \{1\}]},\bar{f}_{[I_-]})-\D^+(\bar{f}_{[I_+ \setminus \{1\}]},\bar{f}_{[I_-]})}{\varepsilon}\\
    =&\lim_{\varepsilon\to 0} \frac{\mathsf{C}(\varepsilon)+\Rm{1}(\varepsilon)}{\varepsilon}
    =\lim_{\varepsilon \to 0} \frac{\mathsf{C}(\varepsilon)+\overline{\Rm{1}}(\varepsilon)}{\varepsilon} + \lim_{\varepsilon\to 0}\frac{o(\varepsilon)}{\varepsilon}\\
    =&\lim_{\varepsilon\to 0}\frac{\overline{\D}^+(\bar{f}_{[I_+ \setminus \{1\}]}+\varepsilon f_{[I_+ \setminus \{1\}]},\bar{f}_{[I_-]}; \bar{f}) -\overline{\D}^+(\bar{f}_{[I_+ \setminus \{1\}]},\bar{f}_{[I_-]};\bar{f})}{\varepsilon},
    \end{aligned}
    \]
this completes the one side in the equivalence. For the variation on the minimizing variables, the proof is analogous.

\end{proof}

\end{document}